\newtheorem{theorem}{Theorem}[section]
\newtheorem{lemma}[theorem]{Lemma}
\newtheorem{proposition}[theorem]{Proposition}
\newtheorem{corollary}[theorem]{Corollary}
\theoremstyle{definition}
\newtheorem{definition}[theorem]{Definition}
\newtheorem{example}[theorem]{Example}
\newtheorem{problem}[theorem]{Problem}
\newtheorem{notation}[theorem]{Notation}
\theoremstyle{remark}
\newtheorem{remark}[theorem]{Remark}
\numberwithin{equation}{section}
\newtheorem{question}[theorem]{Question}
\def\int{\mathop{\rm int}\nolimits}
\def\cl{\mathop{\rm cl}\nolimits}
\def\ran{\mathop{\rm ran}\nolimits}
\begin{document}
\date{}
\title{Path connectedness, local path connectedness and contractibility of $\mathcal{S}_c(X)$}         
\author{Javier Camargo}
\address{Javier Camargo\\
	\newline
	Escuela de Matem\'aticas, Facultad de Ciencias, Universidad Industrial de
	Santander, Ciudad Universitaria, Carrera 27 Calle 9, Bucaramanga,
	Santander, A.A. 678, COLOMBIA.}
\email{jcamargo@saber.uis.edu.co}


\author{David Maya}
\address{David Maya\\
	\newline Universidad Aut\'onoma del Estado de M\'exico, Facultad de Ciencias, Instituto Literiario 100, Col. Centro, Toluca, CP 50000, M\'EXICO.
	}
\email{dmayae@outlook.com, dmayae@uaemex.mx}

\author{Patricia Pellicer-Covarrubias}
\address{Patricia Pellicer-Covarrubias\\
	\newline Departamento de Matem\'aticas, Facultad de Ciencias, Circuito ext. s/n, Ciudad Universitaria, C.P. 04510, CDMX, M\'EXICO.
}
\email{paty@ciencias.unam.mx}

\subjclass[2010]{54A20, 54B20}

\keywords{Hyperspace; hyperspace of nontrivial convergent sequences; local path connectedness; path connectedness.}

\begin{abstract}
The hyperspace of all nontrivial convergent sequences in a Hausdorff space $X$ is denoted by  $\mathcal{S}_c(X)$. This hyperspace is endowed with the Vietoris topology. In connection with a question and a problem by Garc\'ia-Ferreira, Ortiz-Castillo and Rojas-Hern\'andez, concerning conditions under which $\mathcal S_c(X)$ is pathwise connected,  in the current paper we  study the latter property and the contractibility of $\mathcal{S}_c(X)$. We present necessary conditions on a space $X$ to obtain the path connectedness of $\mathcal{S}_c(X)$. We also provide some sufficient conditions on a space $X$ to obtain such path connectedness. Further, we characterize the local path connectedness of $\mathcal{S}_c(X)$ in terms of that of $X$. We prove the contractibility of $\mathcal{S}_c(X)$ for a class of spaces and, finally, we study the connectedness of Whitney blocks and Whitney levels for $\mathcal{S}_c(X)$.
\end{abstract}

\maketitle

Convergence of sequences is an important tool to determine topological properties in Hausdorff spaces. On the other hand, the study of hyperspaces provides information about the topological behavior of the original space and vice versa. In connection with both concepts, the hyperspace consisting of all nontrivial convergent sequences $\mathcal{S}_c(X)$, of a metric space $X$ without isolated points, was introduced in 2015 in \cite{Garcia2015}. 
Since then, there has been increasing interest in studying $\mathcal{S}_c(X)$ and several papers presenting relevant  properties of this hyperspace have been written:  \cite{GarciaC}, \cite{GarciaCP}, \cite{Illanes}, \cite{MayPelPic3}, \cite{MayaGScX} and \cite{MayaCF}.

In \cite{Garcia2015} the authors studied, among other properties, the path connectedness of $\mathcal{S}_c(X)$: they proved that $\mathcal{S}_c(X)$ is pathwise connected, whenever $X$ is either $\mathbb{R}$ or a connected space that is a finite union of copies of $[0,1]$ (\cite[Theorem~2.4 and Lemma~2.11]{Garcia2015}). Also, \cite[Example~2.8]{Garcia2015} showed a pathwise connected continuum $X$ such that $\mathcal{S}_c(X)$ is not pathwise connected. Further, the authors in \cite[Question~2.9]{Garcia2015} asked if $\mathcal{S}_c(X)$ is pathwise connected when $X$ is a dendroid; in \cite[Example~4.6]{MayaGScX}  this question was answered in the negative. They also asked whether the path connectedness of $\mathcal{S}_c(X)$ implies that of $X$ (\cite[Question~2.14]{Garcia2015}); this question was answered in the affirmative for infinite, non-discrete, Fr\'echet-Urysohn spaces  in \cite[Corollary~3.3]{GarciaC}. Despite the recent contributions to this study, the behavior of the path connectedness of $\mathcal{S}_c(X)$ is still not fully understood; in particular, necessary and sufficient conditions on a space $X$ in order for $\mathcal{S}_c(X)$ to be pathwise connected have still not been found. In connection with this situation, 
in \cite[Question~2.13]{Garcia2015} and  \cite[Problem~3.4]{GarciaC} the authors asked, respectively:

\begin{question} \label{preg:garcia}
What kind of topological properties must $X$ have when the hyperspace $\mathcal{S}_c(X)$ is pathwise connected? 
\end{question}

\begin{problem} \label{probl:garcia}
   Give conditions on $X$ under which $\mathcal{S}_c(X)$ must be pathwise connected.
\end{problem}

In the present paper, we address both problems and we study in more detail the behavior of the path connectedness of the hyperspace of nontrivial convergent sequences.  In Corollary~\ref{corolit875iii} and Theorem~\ref{theo:Ri} we give some partial answers to Question~\ref{preg:garcia}. Moreover, in Theorem~\ref{teor:Xcontr-implica-Sc(X)-path-conn} and Corollary~\ref{corol:equiv-entre-path-conn y local-path-conn-XySc(X)} we give  sufficient conditions on a space $X$ so that $\mathcal{S}_c(X)$ is pathwise connected, thus addressing Problem~\ref{probl:garcia}.

More precisely, our main results in this paper are the following: first, we show the path connectedness of $\mathcal{S}_c(X)$ for a contractible and first countable space $X$. 
Second, we prove that for a uniquely arcwise connected continuum $X$, if $\mathcal{S}_c(X)$ is pathwise connected, then $X$ must be uniformly pathwise connected (in fact, if $X$ is not uniformly pathwise connected, then $\mathcal S_c(X)$ has exactly $\mathfrak c$ path components). 
Third, we show that the path connectedness of $\mathcal{S}_c(X)$ implies the absence of $R^i$-sets in a continuum $X$ for all $i \in \{1,2,3\}$. 
Fourth, we establish the equivalence between  the local path connectedness of a first countable space $X$ and that of its hyperspace $\mathcal{S}_c(X)$. 
Fifth,  we determine the contractibility of $\mathcal{S}_c(X)$ for some spaces $X$. Finally, we study  the connectedness of Whitney blocks and Whitney levels for $\mathcal{S}_c(X)$.

We note that Corollary~\ref{corolit875iii}, Theorem~\ref{theo:Ri}, Theorem~\ref{teor:Xcontr-implica-Sc(X)-path-conn} and Corollary~\ref{corol:equiv-entre-path-conn y local-path-conn-XySc(X)} generalize significantly \cite[Example~2.8]{Garcia2015}, \cite[Example~4.6]{MayaGScX},  \cite[Corollary~2.5]{Garcia2015} and \cite[Lemma~2.11]{Garcia2015}, respectively.


\section{Preliminaries}

All topological notions and all set-theoretic notions whose definition is not included here should be understood as in \cite{Engelking1989} and \cite{Kunen1980}, respectively. 

The symbol $\omega$ denotes both, the first infinite ordinal and the first infinite cardinal. In particular, we consider all nonnegative integers as ordinals too; thus, $n\in\omega$ implies that $n=\{0,\dots,n-1\}$ and $\omega \setminus n=\{k\in\omega:k\geq n\}$. The successor of $\omega$ is the ordinal $\omega+1=\omega\cup\{\omega\}$ and
so the symbols $i\in\omega+1$, $i<\omega+1$ and $i\leq\omega$ all represent the same. The set $\omega \setminus \{0\}$ is denoted by $\mathbb{N}$. As usual, $\mathfrak c$ will be used to represent the cardinality of the real line, $\mathbb{R}$.

If $X$ is a set and $\kappa$ is a cardinal, $|X|$ will represent the cardinality of $X$ and $[X]^{<\kappa}$ denotes the family of all subsets of $X$ whose cardinality is $<\kappa$. In particular, $[X]^{<\omega}$ is the collection of all finite subsets of $X$ and $[X]^{<n+1}$ is the collection of all subsets of $X$ having at most $n$ elements, whenever $n \in \mathbb{N}$. The symbol $\mathcal P (X)$ will denote the power set of a set $X$.

For a function $f$, $\ran(f)$ will denote its range, and given a subset $A$ of the domain of $f$, the set $\{f(x) : x \in A\}$ is denoted by $f[A]$.

 

In this paper, {\sl space} means Hausdorff space. For a  space $X$, the symbol $\tau_X$ will denote the collection of all open subsets of $X$. Also, for a set $A\subseteq X$, we will use $\int_X A$ and $\cl_XA$ (or, if there is no risk of confusion, $\overline A$) to represent its interior in $X$ and its closure in $X$, respectively. 

A space $X$ is \emph{locally  connected (strongly locally  pathwise connected)} at a point  $p$ provided that  $X$ has a basis of open,   connected (pathwise connected) neighborhoods of $p$. Moreover,  $X$ is \emph{connected im kleinen (locally pathwise connected) at $p$} whenever $X$ has a basis of connected (pathwise connected) neighborhoods of $p$. Finally, $X$ is \emph{locally connected (strongly locally pathwise connected, connected im kleinen or locally pathwise connected)} provided that $X$ has such property at all of its points.

Our next lemma is known and easy to prove.

\begin{lemma} \label{lema:LAC-equiv} 
  The following conditions are equivalent for a space $X$:
     \begin{itemize}
        \item[ (i)] $X$ is strongly locally pathwise connected;
        \item[ (ii)] $X$ is locally pathwise connected; 
        \item[ (iii)] if $U$ is an open subset of $X$ and $K$ is a path component of $U$, then $K$ is open in $X$.
     \end{itemize}
\end{lemma}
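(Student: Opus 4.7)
The plan is to prove the equivalence by the cyclic chain (i) $\Rightarrow$ (ii) $\Rightarrow$ (iii) $\Rightarrow$ (i). Since an open pathwise connected neighborhood is already a pathwise connected neighborhood, the implication (i) $\Rightarrow$ (ii) is immediate from the definitions.

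For (ii) $\Rightarrow$ (iii), I would fix an open set $U \subseteq X$, a path component $K$ of $U$, and a point $x \in K$. Since $U$ is an open neighborhood of $x$ and $X$ is locally pathwise connected, there is a pathwise connected neighborhood $V$ of $x$ with $V \subseteq U$. Because $V$ is pathwise connected, meets $K$ at $x$, and is contained in $U$, every point of $V$ can be joined to $x$ by a path inside $U$, so $V \subseteq K$. Consequently $x \in \int_X V \subseteq K$, and since $x \in K$ was arbitrary, $K$ is open in $X$.

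For (iii) $\Rightarrow$ (i), I would show that the collection
\[
\mathcal{B}_x = \{K : K \text{ is the path component of some open neighborhood } U \text{ of } x \text{ containing } x\}
\]
is a basis of open pathwise connected neighborhoods of $x$. Each such $K$ is pathwise connected by definition, contains $x$, and is open by hypothesis (iii); moreover, given any open neighborhood $W$ of $x$, the member of $\mathcal{B}_x$ obtained by taking $U = W$ lies inside $W$. This yields strong local path connectedness at $x$, and since $x$ was arbitrary, throughout $X$.

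None of the three implications presents a serious obstacle; the only step requiring a moment of care is the verification in (ii) $\Rightarrow$ (iii) that the pathwise connected neighborhood $V$ is actually contained in $K$ rather than merely meeting it, which rests on the fact that the path components of $U$ partition $U$ and that paths within $V \subseteq U$ witness membership in the same path component as $x$.
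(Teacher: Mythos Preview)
Your proof is correct and is the standard argument. The paper does not actually provide a proof of this lemma; it merely remarks that the result ``is known and easy to prove'' and states it without further justification, so there is nothing to compare against beyond noting that your cyclic argument (i) $\Rightarrow$ (ii) $\Rightarrow$ (iii) $\Rightarrow$ (i) is exactly the routine verification the authors had in mind.
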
      

A space $ Z$ is \textit{contractible} provided that there exist  $p \in Z$ and a homotopy $h: Z \times [0,1] \to Z$ satisfying that $h(z,0) = z$ and $h(z,1) = p$ for each $z \in Z$. Such a homotopy $h$ is called a \emph{contraction}.

A {\sl continuum} is a compact, nonempty, connected, metric space. A \emph{dendrite} is a locally connected continuum that contains no simple closed curves.

A {\sl convergent sequence} in a topological space $X$ is a function $f$ from $\omega$ into $X$ for which there is $x \in X$ in such a way that for each $U \in \tau_X$ with $x \in U$ there exists $n \in \omega$ with $f[\omega \setminus n] \subseteq U$. In this case, we will say that either $f$ converges to $x$ or $x$ is the limit of $f$, and this fact will be denoted by either $\lim\limits_{n \to \infty} f(n) = x$ or $f(n) \to x$. We shall write $(f(n))_{n \in \omega}$ to refer to $f$. If $| \ran (f) | = \omega$, we say that $f$ is {\sl nontrivial}. In connection with this concept, in this paper, a subset $S$ of a space $X$ will be called a {\sl nontrivial convergent sequence in $X$} if $S$ is countably infinite and there is $x \in S$ in such a way that $S \setminus U \in [X]^{< \omega}$ for each $U \in \tau_X$ with $x \in U$. When this happens, the point $x$ is called {\sl the limit point of} $S$ and we will say that $S$ {\sl converges to} $x$ and write either $S \to x$ or $\lim S = x$. Throughout this paper, the reader will be able to identify from the context what is the intended meaning of {\sl nontrivial convergent sequence} in the discussion.


For a space $X$, define the hyperspaces: 
\begin{align*}
\mathcal{CL}(X) 	&= \{ A \subseteq X : A \text{ is closed in } X \text{ and } A \neq \emptyset\},	\\
\mathcal{K}(X) 		&= \{ A \in \mathcal{CL}(X) : A \text{ is compact} \},	\\
\mathcal{S}_c(X) 	&= \{ S \in \mathcal{K}(X) : S \text{ is a nontrivial convergent sequence in } X\}, 
\\
\mathcal{F}_n(X) &=\{A \in \mathcal{CL}(X) :  |A|\leq n\}, \text{ with } n\in\mathbb{N}, \\
\mathcal{F}(X) & = \{A \in \mathcal{CL}(X) : A \text{ is finite}\}=\bigcup_{n\in\mathbb{N}}\mathcal{F}_n(X) \text{ and } \\
\mathcal{C}(X) 		&= \{ A \in \mathcal{K}(X) : A \text{ is connected} \}.	
\end{align*}

Given a family $\mathcal U$ of subsets of $X$, we define 
    \[\langle\mathcal{U} \rangle = \left\{ A \in \mathcal{CL}(X): A\subseteq \bigcup\mathcal U\ 
    \wedge\ \forall\ U\in\mathcal U\ (A\cap U\neq\emptyset) \right\}.\] 
The {\sl Vietoris topology} is the topology on $\mathcal{CL}(X)$ generated by the base consisting of all sets of the form $\langle \mathcal{U} \rangle$, where $\mathcal{U} \in [\tau_X]^{< \omega}$ \cite[Proposition~2.1, p.~155]{Michael1951}. All the hyperspaces  described above will be considered as subspaces of $\mathcal{CL}(X)$. In particular, a base for the topology of $\mathcal{S}_c(X)$ consists of all sets of the form $\langle \mathcal U \rangle_c = \langle \mathcal U \rangle \cap \mathcal \mathcal{S}_c(X)$, where $\mathcal{U} \in [\tau_X]^{< \omega}$.

For a subset $V$ of a space $X$, let  $V^+_c = \langle \{V\} \rangle_c$ and $V^-_c = \langle \{X,V\} \rangle_c$. Thus, when $V$ is open (closed, resp.) in $X$, then  $V^+_c$ and $V^-_c$ are open (closed, resp.) in $\mathcal{S}_c(X)$. 

\begin{lemma} \label{lema:viet-conten}  \cite[2.3.1 of Lemma~2.3]{Michael1951}
  The following conditions are equivalent for a space  $X$ and $\mathcal U, \mathcal V \in [ \mathcal P(X) \setminus \{ \emptyset\}] ^{<\omega}$:
    \begin{enumerate}
        \item $\langle \mathcal V \rangle \subseteq \langle \mathcal U \rangle$; 
        \item $\bigcup \mathcal V \subseteq \bigcup \mathcal U$ and for each $U \in \mathcal U$, there exists $V \in \mathcal V$ such that $V \subseteq U$.
    \end{enumerate}
\end{lemma}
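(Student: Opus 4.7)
The plan is to prove the two implications separately, with the forward direction $(1)\Rightarrow(2)$ handled by contrapositive via the construction of small finite witnesses.

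For the easy direction $(2)\Rightarrow(1)$, I would take an arbitrary $A\in\langle\mathcal V\rangle$ and verify both conditions defining membership in $\langle\mathcal U\rangle$. The inclusion $A\subseteq\bigcup\mathcal U$ is immediate from $A\subseteq\bigcup\mathcal V\subseteq\bigcup\mathcal U$. For any $U\in\mathcal U$, the hypothesis supplies $V\in\mathcal V$ with $V\subseteq U$; since $A\cap V\neq\emptyset$, we get $A\cap U\neq\emptyset$. This part is purely formal.

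For $(1)\Rightarrow(2)$, I would argue the contrapositive in two separate cases, exploiting that $X$ is Hausdorff (so $T_1$), hence every nonempty finite subset of $X$ belongs to $\mathcal{CL}(X)$. First, if $\bigcup\mathcal V\not\subseteq\bigcup\mathcal U$, pick $x\in\bigcup\mathcal V\setminus\bigcup\mathcal U$ together with a point $v_V\in V$ for each $V\in\mathcal V$, and set $A=\{x\}\cup\{v_V:V\in\mathcal V\}$. Since $\mathcal V$ is finite, $A$ is a nonempty finite, hence closed, subset of $X$, and by construction $A\subseteq\bigcup\mathcal V$ while $A\cap V\neq\emptyset$ for every $V\in\mathcal V$; so $A\in\langle\mathcal V\rangle$. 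But $x\in A\setminus\bigcup\mathcal U$ shows $A\notin\langle\mathcal U\rangle$, contradicting (1). Second, if there exists $U_0\in\mathcal U$ such that no $V\in\mathcal V$ is contained in $U_0$, for each $V\in\mathcal V$ choose $v_V\in V\setminus U_0$ and let $A=\{v_V:V\in\mathcal V\}$; again $A\in\mathcal{CL}(X)$ by finiteness, and $A\in\langle\mathcal V\rangle$, yet $A\cap U_0=\emptyset$, so $A\notin\langle\mathcal U\rangle$, another contradiction with (1).

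The main (and only) subtle point is ensuring that the ad hoc witnesses built in the contrapositive actually lie in $\mathcal{CL}(X)$: this is where the standing Hausdorff assumption on $X$ enters, guaranteeing that finite subsets are closed. Once this is noted, both implications are short and essentially mechanical.
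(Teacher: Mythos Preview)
The paper does not give its own proof of this lemma; it is stated with a citation to \cite[2.3.1 of Lemma~2.3]{Michael1951} and left unproved. Your argument is correct and is essentially the standard one (and the one in Michael's paper): the implication $(2)\Rightarrow(1)$ is immediate from the definitions, and for $(1)\Rightarrow(2)$ one exhibits, under the failure of either clause of $(2)$, a finite closed set lying in $\langle\mathcal V\rangle\setminus\langle\mathcal U\rangle$. Your remark that the standing Hausdorff hypothesis (hence $T_1$) is what makes finite sets closed---so that the witnesses actually belong to $\mathcal{CL}(X)$---is precisely the point one must check, and nothing more is needed.
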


For   the sake of simplicity we will adopt the following convention: if $S$ is a nontrivial convergent sequence in a space $X$, we will say that $\{x_n : n \in \omega +1\}$ is an \textsl{adequate enumeration} of $S$ provided that $S = \{ x_n : n \in \omega +1\}$, $\lim S = x_\omega$ and $x_i \neq x_j$ whenever $i<j\leq \omega$.

A {\sl cellular family} in a topological space $X$ is a pairwise disjoint family of nonempty open subsets of $X$. The collection of all finite cellular families of $X$ is denoted by $\mathfrak{C}(X)$.

  \begin{theorem} \label{theo-celulares}
Let $X$ be a space and let $Y \in \mathcal K(X)$ be   zero-dimensional. Then $\{ \langle \mathcal V \rangle  : \mathcal  V  \in \mathfrak C (X)$ and $Y \in \langle \mathcal V \rangle \}$  is a local base for $\mathcal{CL}(X)$ at $Y$.
  \end{theorem}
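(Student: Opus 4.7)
Since the basic Vietoris opens of the form $\langle\mathcal{U}\rangle$ with $\mathcal{U}\in[\tau_X]^{<\omega}$ form a base of $\mathcal{CL}(X)$, it suffices to start with $\mathcal{U}=\{U_1,\dots,U_n\}\subseteq\tau_X$ satisfying $Y\in\langle\mathcal{U}\rangle$ (so $Y\subseteq\bigcup_{i=1}^n U_i$ and $Y\cap U_i\neq\emptyset$ for every $i$), and produce a cellular family $\mathcal{V}\in\mathfrak{C}(X)$ with $Y\in\langle\mathcal{V}\rangle\subseteq\langle\mathcal{U}\rangle$. The inclusion will be verified through Lemma~\ref{lema:viet-conten}, so what I must arrange is: (a) the elements of $\mathcal{V}$ are pairwise disjoint nonempty open sets of $X$; (b) $Y\subseteq\bigcup\mathcal{V}$ and each $V\in\mathcal{V}$ meets $Y$; (c) each $V\in\mathcal{V}$ lies inside some $U_i$; and (d) every $U_i$ contains at least one $V\in\mathcal{V}$.

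The first step is to obtain the partition at the level of $Y$. For each $y\in Y$, set $I(y)=\{i:y\in U_i\}\neq\emptyset$ and $U(y)=\bigcap_{i\in I(y)}U_i$, which is open in $X$ and contains $y$. Since $Y$ is compact Hausdorff and zero-dimensional, it has a base of clopen subsets, so I can pick a clopen (in $Y$) neighborhood $C(y)$ of $y$ with $C(y)\subseteq U(y)\cap Y$. Compactness of $Y$ yields finitely many $C(y_1),\dots,C(y_k)$ covering $Y$, and disjointifying ($P_1=C(y_1)$, $P_j=C(y_j)\setminus\bigcup_{\ell<j}C(y_\ell)$ after discarding empty parts) produces a clopen partition $\{P_1,\dots,P_m\}$ of $Y$ with $P_j\subseteq U_{\sigma(j)}$ for some map $\sigma$. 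To secure condition (d), for every $i$ not in the range of $\sigma$ I choose $y_i\in Y\cap U_i$, find a clopen (in $Y$) neighborhood $D_i$ of $y_i$ with $D_i\subseteq U_i$, and then replace the unique part $P_j$ containing $y_i$ by the two clopen pieces $P_j\cap D_i$ and $P_j\setminus D_i$ (the former is contained in both $U_{\sigma(j)}$ and $U_i$). After iterating over such $i$, I may assume $\sigma$ is surjective.

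Next I promote the clopen (in $Y$) partition to pairwise disjoint open sets of $X$. The $P_j$'s are pairwise disjoint compact subsets of the Hausdorff space $X$, so a standard induction using Hausdorff separation gives pairwise disjoint open sets $W_1,\dots,W_m$ of $X$ with $P_j\subseteq W_j$. Define $V_j=W_j\cap U_{\sigma(j)}$. Then each $V_j$ is open in $X$ and contains $P_j\neq\emptyset$, the family $\mathcal{V}=\{V_1,\dots,V_m\}$ is cellular, $V_j\cap Y\supseteq P_j\neq\emptyset$, $Y=\bigcup P_j\subseteq\bigcup V_j$, $V_j\subseteq U_{\sigma(j)}$, and the surjectivity of $\sigma$ ensures that every $U_i$ contains some $V_j$. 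Thus $Y\in\langle\mathcal{V}\rangle$ and, by Lemma~\ref{lema:viet-conten}, $\langle\mathcal{V}\rangle\subseteq\langle\mathcal{U}\rangle$, which completes the proof.

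The main technical hurdle is condition (d): a plain clopen refinement of the cover $\{U_i\cap Y\}$ might leave some $U_i$ with no part assigned to it, and the repair step above (splitting parts using clopen neighborhoods of auxiliary points $y_i\in Y\cap U_i$) is the only place where zero-dimensionality is used a second time. The separation of the $P_j$'s by disjoint open sets in $X$ uses Hausdorffness and compactness and is otherwise routine.
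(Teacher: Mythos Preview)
Your argument follows the same overall strategy as the paper's: build a finite clopen partition of $Y$ subordinate to the trace of $\mathcal{U}$, separate the resulting compact pieces by pairwise disjoint open subsets of $X$, and invoke Lemma~\ref{lema:viet-conten}. There is, however, a small gap in your repair step. The claim that the iterated splitting makes $\sigma$ surjective can fail: take $Y=\{a,b\}$ with $a\in U_1\cap U_2\setminus U_3$ and $b\in U_2\cap U_3\setminus U_1$; no clopen partition of $Y$ has more than two pieces, so a function $\sigma$ into $\{1,2,3\}$ can never be onto, and your splits just reassign the same singleton back and forth. The fix is immediate once you drop the insistence on a single label $\sigma(j)$: set $V_j=W_j\cap\bigcap\{U_i:P_j\subseteq U_i\}$ instead of $W_j\cap U_{\sigma(j)}$. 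Then the repair need only ensure that for each $i$ some piece $P_j$ lies inside $U_i$, and this property \emph{is} achieved by your splitting (the new piece $P_j\cap D_i$ sits in $U_i$, and refinement preserves all previously satisfied inclusions).

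The paper avoids this bookkeeping by reversing the order of your two steps: it first fixes a finite $F\subseteq Y$ with $F\in\langle\mathcal{U}\rangle$, chooses clopen $Q_x\ni x$ with $Q_x\subseteq\bigcap\{U\in\mathcal{U}:x\in U\}$ for each $x\in F$ (made pairwise disjoint), and only then throws in the leftover clopen set $Y\setminus\bigcup_{x\in F}Q_x$ to complete the partition. Since every $U\in\mathcal{U}$ contains some $x\in F$, condition~(d) is built in from the start, and the open neighborhoods $V_{Q_x}$ are taken inside the full intersection $\bigcap\{U:x\in U\}$, exactly the adjustment you need.
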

  \begin{proof}
    Fix  $\mathcal U \in [\tau _X]^{<\omega}$ such that $Y \in \langle \mathcal U \rangle $. Let $F \in [Y]^{<\omega} \cap \langle \mathcal U \rangle $. For each $x \in F$, let $Q_x$ be a clopen subset of $Y$ such that $x \in Q_x \subseteq \bigcap \{U \in \mathcal U: x \in U\}$. Set $\mathcal Q_0 =  \{Q_x : x \in F\}$.  We may assume that the elements of $\mathcal Q _0$ are pairwise disjoint. 
    
If  $\bigcup \mathcal Q_0 \ne Y$, define $ \mathcal Q_1 = \mathcal Q_0 \cup \{ Y \setminus \bigcup \mathcal Q_0\}$; otherwise let $\mathcal Q_1 = \mathcal Q_0$. In either case,  $\mathcal Q_1$ is a finite  family of pairwise disjoint, nonempty, compact subspaces of $X$ whose union is $Y$.

Let $\mathcal V \in \mathfrak C (X)$ be such that: 

(i) $| \mathcal Q_1| = | \mathcal V|$,  

(ii) each element $Q$ of $\mathcal Q_1$ is contained in exactly one element $V_Q$ of $\mathcal V$,
 
  (iii) $Y \subseteq \bigcup \mathcal V \subseteq \bigcup \mathcal U$ and
  
   (iv) if $Q= Q_x$ for some $x \in F$, then $V_Q \subseteq  \bigcap \{U \in \mathcal U: x \in U\}$. 
   
\noindent   Using Lemma~\ref{lema:viet-conten} and the fact that $F \in \langle \mathcal U \rangle$, it follows easily that $Y \in \langle \mathcal V \rangle \subseteq \langle \mathcal U \rangle$. 
\end{proof}

  \begin{corollary}  \label{corol:UCXbase}
Let $X$ be a space and let $\mathcal H (X)$ be a subspace of $\mathcal K (X)$ with the property that each element of $\mathcal H (X)$ is zero-dimensional. Then $\{ \langle \mathcal V \rangle \cap \mathcal H (X) : \mathcal  V  \in \mathfrak C (X)\}$ is a base for $\mathcal H(X)$.
  \end{corollary}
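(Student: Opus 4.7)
The corollary is essentially a direct transfer of Theorem~\ref{theo-celulares} from $\mathcal{CL}(X)$ to the subspace $\mathcal{H}(X)$. My plan is to verify the two requirements for a family to be a base, namely that each member is open and that the family is rich enough to refine every neighborhood at every point.

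First I would observe that for any $\mathcal{V} \in \mathfrak{C}(X)$, the set $\langle \mathcal{V} \rangle$ is a basic open subset of $\mathcal{CL}(X)$ since a cellular family consists, by definition, of nonempty open subsets of $X$ (so $\mathcal{V} \in [\tau_X]^{<\omega}$). Hence $\langle \mathcal{V} \rangle \cap \mathcal{H}(X)$ is open in $\mathcal{H}(X)$ for every $\mathcal{V} \in \mathfrak{C}(X)$.

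Next I would show the refinement property. Fix $Y \in \mathcal{H}(X)$ and an open neighborhood $W$ of $Y$ in $\mathcal{H}(X)$. Since $\mathcal{H}(X)$ carries the subspace topology inherited from $\mathcal{CL}(X)$, there is an open set $W'$ of $\mathcal{CL}(X)$ with $W = W' \cap \mathcal{H}(X)$. By hypothesis $Y$ is a zero-dimensional element of $\mathcal{K}(X)$, so Theorem~\ref{theo-celulares} applies and yields $\mathcal{V} \in \mathfrak{C}(X)$ with $Y \in \langle \mathcal{V} \rangle \subseteq W'$. Intersecting with $\mathcal{H}(X)$ gives
\[
Y \in \langle \mathcal{V} \rangle \cap \mathcal{H}(X) \subseteq W' \cap \mathcal{H}(X) = W,
\]
which is exactly the condition needed for $\{\langle \mathcal{V} \rangle \cap \mathcal{H}(X) : \mathcal{V} \in \mathfrak{C}(X)\}$ to be a base at $Y$.

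There is really no hard step here; the content lies entirely in Theorem~\ref{theo-celulares}, and the only thing to be mildly careful about is to extract the open set $W'$ in $\mathcal{CL}(X)$ corresponding to $W$ before invoking that theorem, and to remember that cellular families consist of open sets so that the proposed basic sets are genuinely open in the subspace. Combining the two observations completes the argument.
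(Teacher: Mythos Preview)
Your proof is correct and is exactly the routine transfer from $\mathcal{CL}(X)$ to the subspace $\mathcal{H}(X)$ that the paper leaves implicit (the corollary is stated without proof, immediately after Theorem~\ref{theo-celulares}). There is nothing to add.
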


The following result was proved in \cite[Proposition~3.2]{MayaGScX}.

\begin{proposition}\label{pro:UCXbase} 
For an arbitrary space $X$, $\{ \langle \mathcal{U} \rangle_c : \mathcal{U} \in \mathfrak{C}(X) \}$ is a base for $\mathcal{S}_c(X)$.
\end{proposition}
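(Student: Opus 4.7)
The plan is to deduce Proposition~\ref{pro:UCXbase} as an immediate consequence of Corollary~\ref{corol:UCXbase} applied to $\mathcal{H}(X) = \mathcal{S}_c(X)$. The whole content of the statement therefore reduces to verifying the hypothesis of that corollary: every element of $\mathcal{S}_c(X)$ is a zero-dimensional compact subspace of $X$.

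First I would recall that, by definition, if $S \in \mathcal{S}_c(X)$ and $\{x_n : n \in \omega+1\}$ is an adequate enumeration of $S$, then every $x_n$ with $n \in \omega$ is isolated in $S$, while the neighborhoods of the limit point $x_\omega$ in the subspace topology are exactly the sets of the form $S \setminus F$ with $F \in [\{x_n : n \in \omega\}]^{<\omega}$. This is the familiar topology of the ordinal space $\omega+1$; in particular, $S$ is a compact, metrizable, zero-dimensional subspace of $X$. (Already $S \in \mathcal{K}(X)$ from the definition of $\mathcal{S}_c(X)$, so the only genuine content here is zero-dimensionality, which is clear from the description of the basic open sets.)

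Next, with this observation, take $\mathcal{H}(X) = \mathcal{S}_c(X) \subseteq \mathcal{K}(X)$. The hypothesis of Corollary~\ref{corol:UCXbase} is satisfied, so $\{\langle \mathcal{V}\rangle \cap \mathcal{S}_c(X) : \mathcal{V} \in \mathfrak{C}(X)\}$ is a base for $\mathcal{S}_c(X)$. Since the paper's convention defines $\langle \mathcal{V}\rangle_c = \langle \mathcal{V}\rangle \cap \mathcal{S}_c(X)$, this is exactly the family $\{\langle \mathcal{U}\rangle_c : \mathcal{U} \in \mathfrak{C}(X)\}$, and the proposition follows.

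There is no real obstacle: the only step requiring any verification is the zero-dimensionality of a convergent sequence together with its limit, which is immediate from the description of its subspace topology. All the heavy lifting — the approximation of arbitrary Vietoris neighborhoods of a zero-dimensional compact set by cellular-family neighborhoods — has already been carried out in Theorem~\ref{theo-celulares} and Corollary~\ref{corol:UCXbase}.
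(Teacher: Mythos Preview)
Your proposal is correct and is exactly the derivation the paper itself points to: the paper states Proposition~\ref{pro:UCXbase} with a citation to \cite{MayaGScX} and then remarks that Corollary~\ref{corol:UCXbase} generalizes it, so obtaining the proposition by applying that corollary with $\mathcal{H}(X)=\mathcal{S}_c(X)$ (using that every nontrivial convergent sequence is zero-dimensional) is precisely the intended route.
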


Observe that Corollary~\ref{corol:UCXbase} generalizes Proposition~\ref{pro:UCXbase}  and also it  has a shorter proof. We  note that  Proposition~\ref{pro:UCXbase} will be used constantly throughout the paper, although we may not always  mention it explicitly. Corollary~\ref{corol:UCXbase} will be used in Proposition~\ref{lemma2or3}.

The following are known results that will be useful in the rest of the paper.

\begin{lemma} \cite[Corollary~5.8.1, p.~169]{Michael1951} \label{lema:union-finita-cont}
    Let $X$ be a space. The function $\varphi : \mathcal{CL}(X)^n \to \mathcal{CL}(X)$ given by $\varphi(E_1, \dots , E_n) = \bigcup _{i=1}^n E_i$ is continuous.
\end{lemma}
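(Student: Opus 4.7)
The plan is to verify continuity by showing that preimages of subbasic open sets in $\mathcal{CL}(X)$ under $\varphi$ are open in $\mathcal{CL}(X)^n$. Recall that the Vietoris topology on $\mathcal{CL}(X)$ admits the standard subbase consisting of all sets of the form $V^+ = \{A \in \mathcal{CL}(X) : A \subseteq V\}$ and $V^- = \{A \in \mathcal{CL}(X) : A \cap V \neq \emptyset\}$, with $V$ ranging over $\tau_X$; in the notation of the paper these arise as $\langle\{V\}\rangle$ and $\langle\{X,V\}\rangle$, respectively. Before doing anything else, one checks that $\varphi$ is well-defined: a finite union of nonempty closed sets is again a nonempty closed set, so $\varphi(E_1,\ldots,E_n) \in \mathcal{CL}(X)$.

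For the preimage of $V^+$, observe that $\bigcup_{i=1}^n E_i \subseteq V$ is equivalent to $E_i \subseteq V$ for every $i \in \{1,\ldots,n\}$. Hence $\varphi^{-1}(V^+) = (V^+)^n$, a product of open sets and therefore open in $\mathcal{CL}(X)^n$. For the preimage of $V^-$, the condition $\bigl(\bigcup_{i=1}^n E_i\bigr) \cap V \neq \emptyset$ holds if and only if there exists an index $i$ with $E_i \cap V \neq \emptyset$. Writing $\pi_i : \mathcal{CL}(X)^n \to \mathcal{CL}(X)$ for the $i$-th coordinate projection, this translates to
\[
\varphi^{-1}(V^-) \;=\; \bigcup_{i=1}^n \pi_i^{-1}(V^-),
\]
which is a finite union of open sets in $\mathcal{CL}(X)^n$ and thus open.

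Since preimages of subbasic open sets are open, $\varphi$ is continuous. The argument amounts to a direct unwinding of the definitions of finite union and of the Vietoris subbase, so I do not anticipate any genuine obstacle; the only care needed is to confirm the well-definedness of the codomain and to handle the two subbase types separately.
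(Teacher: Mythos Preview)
Your proof is correct. The paper does not supply its own argument for this lemma; it simply cites \cite[Corollary~5.8.1, p.~169]{Michael1951}. Your subbase computation is the standard route and matches what one finds in Michael's paper, so there is nothing to compare beyond noting that you have written out what the authors chose to quote.
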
   

\begin{lemma} \label{lema:union-da-abierto} \cite[Lemma~4.8]{MayaGScX}
  Let $X$ be a space. If $\mathsf G$ is an open subset of $\mathcal \mathcal{S}_c(X)$, then $\bigcup \mathsf G$ is an open subset of $X$.
\end{lemma}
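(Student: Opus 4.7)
The plan is to fix an arbitrary $x \in \bigcup \mathsf{G}$ and exhibit an explicit open neighborhood of $x$ contained in $\bigcup \mathsf{G}$. Since $x \in \bigcup \mathsf{G}$, there exists $S \in \mathsf{G}$ with $x \in S$. Because $\mathsf{G}$ is open in $\mathcal{S}_c(X)$, Proposition~\ref{pro:UCXbase} supplies a finite cellular family $\mathcal{U} \in \mathfrak{C}(X)$ with $S \in \langle \mathcal{U} \rangle_c \subseteq \mathsf{G}$. Let $U$ be the unique element of $\mathcal{U}$ containing $x$ (unique by pairwise disjointness). I would then aim to prove the stronger statement $U \subseteq \bigcup \mathsf{G}$, which gives the desired open neighborhood.

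To prove $U \subseteq \bigcup \mathsf{G}$, fix an adequate enumeration $\{x_n : n \in \omega+1\}$ of $S$ with $\lim S = x_\omega$ and take an arbitrary $y \in U$. If $y \in S$, then $y \in \bigcup \mathsf{G}$ trivially. Otherwise, the strategy is to build a modified sequence $T \in \langle \mathcal{U}\rangle_c$ containing $y$ by replacing a suitable non-limit term of $S$ lying in $U$ with $y$. Concretely, I would pick a non-limit term $x_n \in S \cap U$ (with $n \in \omega$) and set $T = (S \setminus \{x_n\}) \cup \{y\}$. The verification then splits into three checks: (a) $T$ is a nontrivial convergent sequence with limit $x_\omega$; (b) $T \in \langle \mathcal{U} \rangle_c$; and (c) $y \in T$. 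Check (a) follows because $T$ and $S$ differ in only finitely many non-limit terms (using $y \neq x_\omega$, which holds since $y \notin S \ni x_\omega$); check (c) is immediate; and for (b) I would verify $T \subseteq \bigcup \mathcal{U}$ and $T \cap V \neq \emptyset$ for every $V \in \mathcal{U}$, distinguishing whether $V = U$ (where $y \in T\cap V$) or $V \neq U$ (where $T \cap V = S \cap V \neq \emptyset$ by disjointness of $\mathcal{U}$).

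The one subtle point, and therefore the main obstacle, is confirming the existence of a non-limit term $x_n \in S \cap U$ to replace: without this, the swap would risk destroying the convergent-sequence structure. I would handle it by a short case analysis. If $x_\omega \in U$, then $U$ is an open neighborhood of $\lim S$ and hence contains cofinitely many $x_n$, so it contains non-limit terms. If $x_\omega \notin U$, then $S \cap U \subseteq \{x_n : n \in \omega\}$ consists entirely of non-limit terms, and $S \cap U \neq \emptyset$ because $S \in \langle \mathcal{U}\rangle_c$. In either case a non-limit term is available, which completes the argument.
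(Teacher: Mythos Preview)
Your proof is correct. The paper does not supply its own argument for this lemma; it merely cites \cite[Lemma~4.8]{MayaGScX}, so there is no in-paper proof to compare against. Your approach---using Proposition~\ref{pro:UCXbase} to obtain a cellular family $\mathcal{U}$, taking the member $U$ containing $x$, and showing $U \subseteq \bigcup \mathsf{G}$ via the swap $T = (S \setminus \{x_n\}) \cup \{y\}$---is clean and complete, and your case analysis guaranteeing a non-limit term in $S \cap U$ correctly handles the one delicate point.
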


Let $X$ and $Y$ be spaces and let $f : X \to Y$ be a mapping. Define the \emph{induced mapping} $\mathcal{K}(f) : \mathcal{K}(X) \to \mathcal{K}(Y)$ by $$\mathcal{K}(f)(A) = f[A] \text{ for each } A \in \mathcal{K}(X).$$ 

\begin{theorem}   \cite[5.10.1 of Theorem~5.10, p.~170]{Michael1951} \label{teor:func-ind-cont}
  Given a mapping $f\colon X \to Y$, the induced mapping $\mathcal{K}(f) : \mathcal{K}(X) \to \mathcal{K}(Y)$ is continuous.
\end{theorem}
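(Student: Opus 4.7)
The plan is to verify continuity directly from the subbasic/basic description of the Vietoris topology. Fix $A \in \mathcal{K}(X)$ and a basic open neighborhood $\langle \mathcal{V} \rangle$ of $\mathcal{K}(f)(A) = f[A]$ in $\mathcal{K}(Y)$, where $\mathcal{V} = \{V_1,\dots,V_n\} \in [\tau_Y]^{<\omega}$. I want to exhibit a basic open neighborhood of $A$ in $\mathcal{K}(X)$ whose image under $\mathcal{K}(f)$ lands inside $\langle \mathcal{V} \rangle$.

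First, I would set $U_i = f^{-1}(V_i)$ for each $i \in \{1,\dots,n\}$; continuity of $f$ gives $U_i \in \tau_X$. Since $f[A] \in \langle \mathcal{V} \rangle$, we have $f[A] \subseteq \bigcup_{i=1}^{n} V_i$ and $f[A] \cap V_i \neq \emptyset$ for every $i$. Taking preimages these translate immediately to $A \subseteq \bigcup_{i=1}^{n} U_i$ and $A \cap U_i \neq \emptyset$ for every $i$ (for the latter, pick $a \in A$ with $f(a) \in V_i$, so $a \in U_i$). Hence $A \in \langle \mathcal{U} \rangle$ where $\mathcal{U} = \{U_1,\dots,U_n\}$.

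Next, I would show the inclusion $\mathcal{K}(f)[\langle \mathcal{U} \rangle \cap \mathcal{K}(X)] \subseteq \langle \mathcal{V} \rangle$. Let $B \in \langle \mathcal{U} \rangle \cap \mathcal{K}(X)$. From $B \subseteq \bigcup U_i$ and $f[U_i] \subseteq V_i$ we get $f[B] \subseteq \bigcup_{i=1}^{n} V_i$. From $B \cap U_i \neq \emptyset$, choose $b_i \in B \cap U_i$; then $f(b_i) \in f[B] \cap V_i$, so $f[B]$ meets every $V_i$. Thus $f[B] = \mathcal{K}(f)(B) \in \langle \mathcal{V} \rangle$, as desired. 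Since basic open sets of the form $\langle \mathcal{V} \rangle \cap \mathcal{K}(Y)$ generate the topology on $\mathcal{K}(Y)$, this proves continuity of $\mathcal{K}(f)$ at the arbitrary point $A$.

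There is essentially no obstacle here; the argument is a bookkeeping exercise relating the two conditions defining $\langle \mathcal{U} \rangle$ to their images under $f$, and the only thing that must be checked with some care is that $f[B]$ is automatically compact (hence lies in $\mathcal{K}(Y)$), which follows from continuity of $f$ and compactness of $B$. The proof uses no special structure beyond continuity of $f$ and the definition of the basic Vietoris neighborhoods, and in particular does not require Hausdorffness or metrizability.
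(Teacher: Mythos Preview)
Your argument is correct and is the standard proof. Note that the paper does not give its own proof of this statement: it simply cites \cite[5.10.1 of Theorem~5.10, p.~170]{Michael1951} and uses the result as a black box. Your direct verification via preimages of the basic Vietoris sets is exactly the classical argument (and essentially what appears in Michael's original paper), so there is nothing to compare.
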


Using the proof of  \cite[Theorem~3.2]{GarciaC}  one can show the following result.

\begin{theorem} \label{teor:trayec-gamma-enX}   
	For a space $X$, let $\mathcal H(X)$ be a nonempty subset of $\mathcal K(X)$ with the property that each element of $\mathcal H(X)$ is zero-dimensional. If $\alpha:[0,1]\to\mathcal H(X) $ is a path, then for each $p\in\alpha(0)$ there is a path $\beta:[0,1]\to X$ in such a way that $\beta(t)\in\alpha(t)$, whenever $t\in [0,1]$, and $\beta(0)=p$. 
\end{theorem}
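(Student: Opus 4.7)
The plan is to adapt the strategy of \cite[Theorem~3.2]{GarciaC}: I would build $\beta$ as the ``limit'' of a coherent family of selections indexed by increasingly fine partitions of $[0,1]$ and by correspondingly refined cellular neighborhoods of $\alpha$ supplied by Corollary~\ref{corol:UCXbase}.

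In more detail, I would proceed recursively on $n\ge 0$. At stage $n$, using continuity of $\alpha$, compactness of $[0,1]$, and the cellular base of Corollary~\ref{corol:UCXbase}, I would produce a partition $0=t_0^n<t_1^n<\cdots<t_{k_n}^n=1$ and, for each $i<k_n$, a cellular family $\mathcal V_i^n\in\mathfrak C(X)$ with $\alpha([t_i^n,t_{i+1}^n])\subseteq\langle\mathcal V_i^n\rangle$. The passage from stage $n$ to $n+1$ would be arranged so that (a) the partitions refine and (b) whenever $[t_j^{n+1},t_{j+1}^{n+1}]\subseteq[t_i^n,t_{i+1}^n]$, each member of $\mathcal V_j^{n+1}$ is contained in a unique member of $\mathcal V_i^n$. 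Starting with $p_0^0=p$, I would pick $p_i^n\in\alpha(t_i^n)$ recursively, choosing each $p_i^n$ to lie in the unique cellular element of the coarser stage already housing its ``parent'' point; this is always possible because every $V\in\mathcal V_i^n$ meets each $\alpha(s)$ with $s\in[t_i^n,t_{i+1}^n]$. For each $t\in[0,1]$ the coherent choices determine a decreasing sequence of cellular members, and $\beta(t)$ will be defined as the unique point in the intersection of their closures inside $\alpha(t)$.

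The principal obstacle is analytical: since $X$ is merely Hausdorff, one must ensure that the cellular refinements actually pin down $\beta(t)$ to a single point and that the resulting map is continuous on $[0,1]$. To overcome this I would replace $X$ by the compact Hausdorff set $K:=\bigcup\alpha([0,1])\subseteq X$, whose compactness follows from that of $\alpha([0,1])$ in $\mathcal K(X)$ via a standard open-cover argument, and work with the unique compatible uniformity on $K$; at stage $n$, I would force every trace $V\cap K$ with $V\in\mathcal V_i^n$ to be small in a uniformity entourage that shrinks to the diagonal as $n\to\infty$. The resulting nested family exhibits Cauchy-like behavior, yielding a unique limit $\beta(t)\in\alpha(t)$, and uniform control across the stages delivers continuity of $\beta$. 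That $\beta(0)=p$ and $\beta(t)\in\alpha(t)$ follow directly from the coherent selection of the points $p_i^n$.
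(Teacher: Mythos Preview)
Your proposal is correct and follows essentially the same route as the paper: the paper does not write out a proof but simply states that the result follows by using the proof of \cite[Theorem~3.2]{GarciaC}, which is precisely the strategy you outline. Your added remark---passing to the compact set $K=\bigcup\alpha([0,1])$ and using its unique compatible uniformity to secure convergence and continuity in the general Hausdorff setting---is a sound and explicit way to carry out the adaptation the paper leaves implicit.
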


Our next result is proved for infinite, non-discrete, Fr\'echet-Urysohn spaces in \cite[Corollary~3.3]{GarciaC}. However, using the proof of \cite[Theorem~4.5]{MayaGScX} one can show it for arbitrary spaces.

\begin{theorem} \label{teor:Sc(X)-path-conn-implicaXtb} 
  If $X$ is a space such that $\mathcal \mathcal{S}_c(X)$ is nonempty and pathwise connected, then so is $X$.
\end{theorem}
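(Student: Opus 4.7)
The strategy is to exploit Theorem~\ref{teor:trayec-gamma-enX}: since every $S\in\mathcal{S}_c(X)$ is homeomorphic to $\omega+1$ and hence zero-dimensional, the theorem applies with $\mathcal{H}(X)=\mathcal{S}_c(X)$, so any path $\alpha:[0,1]\to\mathcal{S}_c(X)$ admits a continuous lift $\beta:[0,1]\to X$ with $\beta(t)\in\alpha(t)$ and $\beta(0)=p$ for any chosen $p\in\alpha(0)$. I would use this to run a disconnection argument on $\mathcal{S}_c(X)$ indexed by path components of $X$.

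Fix $x\in X$, let $P=[x]$ denote its path component in $X$, and consider
\[
\mathsf{P}=\{S\in\mathcal{S}_c(X):S\cap P\neq\emptyset\}.
\]
For any path $\alpha:[0,1]\to\mathcal{S}_c(X)$ with $\alpha(0)\in\mathsf{P}$, pick $s\in\alpha(0)\cap P$ and apply Theorem~\ref{teor:trayec-gamma-enX} to obtain a lift $\beta$ with $\beta(0)=s$; since $\beta$ is a continuous path in $X$ starting in the path component $P$, its image lies in $P$, so $\alpha(1)\ni\beta(1)\in P$ and $\alpha(1)\in\mathsf{P}$. Thus $\mathsf{P}$ is a union of path components of $\mathcal{S}_c(X)$. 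It is nonempty because $S_0\cup\{x\}\in\mathcal{S}_c(X)\cap\mathsf{P}$ for every $S_0\in\mathcal{S}_c(X)$ (adding a single point to a convergent sequence preserves convergence), so path connectedness of $\mathcal{S}_c(X)$ forces $\mathsf{P}=\mathcal{S}_c(X)$: every $S\in\mathcal{S}_c(X)$ meets $P$.

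To conclude $P=X$, suppose for contradiction $y\in X\setminus P$ and put $Q=[y]$. The same argument yields $\mathsf{Q}=\mathcal{S}_c(X)$, and $X$ has no isolated points, since an isolated $q$ would split $\mathcal{S}_c(X)$ into the disjoint nonempty open sets $\{S:q\in S\}$ and $\{S:q\notin S\}$, contradicting path connectedness. Pick any $S_0\in\mathcal{S}_c(X)$; since $\lim S_0$ belongs to exactly one of $P$ or $Q$ and both $S_0\cap P$ and $S_0\cap Q$ are nonempty, the ``opposite'' intersection contains a point $y_0\neq\lim S_0$, and such $y_0$ is isolated in $S_0$. Say $y_0\in S_0\cap Q$ (the case $y_0\in S_0\cap P$ is handled by swapping the roles of $P$ and $Q$). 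Then $S_0\setminus\{y_0\}\in\mathcal{S}_c(X)$, and path connectedness provides a path $\alpha$ from $S_0$ to $S_0\setminus\{y_0\}$; lifting $\alpha$ at $y_0$ produces a continuous nonconstant $\beta:[0,1]\to X$ with $\beta(0)=y_0$, $\beta(1)\in S_0\setminus\{y_0\}$, and $\beta([0,1])\subseteq Q$. Letting $t^*=\sup\beta^{-1}(\{y_0\})\in[0,1)$, the sequence $(\beta(t^*+1/n))$ lies in $Q\setminus\{y_0\}$ and converges to $\beta(t^*)=y_0$ by continuity; moreover it must take infinitely many distinct values, for otherwise, by Hausdorffness, the finitely many values could be simultaneously separated from $y_0$ by an open neighborhood, forbidding convergence to $y_0$. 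Extracting a subsequence $(w_n)$ of distinct points, the set $\{w_n:n\in\omega\}\cup\{y_0\}$ is a nontrivial convergent sequence in $\mathcal{S}_c(X)$ contained in $Q$ and disjoint from $P$, contradicting $\mathsf{P}=\mathcal{S}_c(X)$. Hence $P=X$ and $X$ is path-connected. The main obstacle is this last extraction step, which must be carried out via Hausdorffness alone since $X$ is not assumed to be first countable.
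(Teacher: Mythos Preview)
Your proof is correct. The paper itself does not spell out an argument for this theorem; it merely remarks that ``using the proof of \cite[Theorem~4.5]{MayaGScX} one can show it for arbitrary spaces,'' so there is no in-paper proof to compare against line by line. Your approach is fully in the spirit of the paper's toolkit, since the key step---lifting paths in $\mathcal{S}_c(X)$ to paths in $X$ via Theorem~\ref{teor:trayec-gamma-enX}---is exactly the device the paper sets up for this kind of argument.

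Two minor remarks. First, the sentence ``$\lim S_0$ belongs to exactly one of $P$ or $Q$'' is not quite right: $\lim S_0$ could lie in a third path component. What you actually need (and use) is only that $\lim S_0$ lies in \emph{at most} one of $P,Q$, so that one of $S_0\cap P$, $S_0\cap Q$ misses the limit point and hence contains an isolated point $y_0$ of $S_0$. Second, the observation that $X$ has no isolated points is correct but never used in what follows; you may simply delete it. With those cosmetic fixes the argument is clean and self-contained.
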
  

\begin{remark} \label{rem:c}
  If $X$ is a space such that $|X| = \mathfrak c$, then $|\mathcal S_c(X)| \le \mathfrak c ^\omega = \mathfrak c$. In particular, $\mathcal S_c(X)$ has at most $\mathfrak c$ path components.
\end{remark}

\begin{theorem} \cite[Corollary~31.6, p.~222]{Willard}  \label{lema:path-con-implica-arco-con}
  A $T_2$ space is pathwise connected if and only if it is arcwise connected.
\end{theorem}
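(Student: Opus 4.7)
The forward direction is immediate, since every arc is in particular a path. So I focus on the converse: given a pathwise connected $T_2$ space $X$ and distinct points $a,b \in X$, I will produce an arc joining them. Fix a path $f\colon [0,1] \to X$ with $f(0) = a$ and $f(1) = b$, and set $K = f([0,1])$, a compact Hausdorff subspace of $X$.

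The strategy is a Zorn's lemma argument on reparametrizations. Consider the family
$$\mathcal F = \{g\colon [0,1] \to K \text{ continuous} : g(0) = a,\ g(1) = b\},$$
partially ordered by setting $g \preceq h$ whenever $g = h \circ \varphi$ for some continuous, non-decreasing $\varphi\colon [0,1] \to [0,1]$ with $\varphi(0)=0$ and $\varphi(1)=1$. First I would verify that every chain in $(\mathcal F, \preceq)$ admits a lower bound, by well-ordering the chain, composing the successive reparametrizations, and passing to a pointwise limit; since $[0,1]$ is compact and $K$ is Hausdorff, nested intersections of the relevant non-empty compact sets of preimages are non-empty, producing a common factorization that stays in $\mathcal F$. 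Zorn's lemma then yields a minimal element $g_0 \in \mathcal F$.

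The heart of the proof is to show that $g_0$ is injective. If $g_0(s) = g_0(t)$ for some $s<t$, I would construct a strictly smaller $g_1 \prec g_0$ by collapsing a carefully chosen subinterval on which $g_0$ begins and ends at the same value, and then reparametrizing linearly: take $\varphi$ mapping $[0,\tfrac12]$ linearly onto $[0,s]$ and $[\tfrac12,1]$ linearly onto $[t,1]$, and set $g_1 = g_0 \circ \varphi$. Then $g_1 \preceq g_0$ by construction, and by choosing $[s,t]$ maximal with the coincidence property, together with the Hausdorff separation inside $K$, one rules out any reverse factorization $g_0 = g_1 \circ \psi$, contradicting the minimality of $g_0$.

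Finally, since $g_0\colon [0,1]\to K$ is a continuous injection from a compact space into a Hausdorff space, it is a homeomorphism onto its image, and $g_0([0,1])$ is the required arc from $a$ to $b$. The main obstacle is the technical bookkeeping in the Zorn step: the construction of lower bounds for arbitrary (possibly uncountable) chains without any countability hypothesis on $X$, which forces a transfinite argument relying only on compactness and the Hausdorff property of $K$, and the delicate choice of the collapsed interval in the injectivity argument so that it genuinely yields a strict predecessor in $\preceq$.
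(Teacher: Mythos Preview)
The paper gives no proof of this statement; it simply cites Willard. Your sketch, however, has a concrete gap in the injectivity step. The map $\varphi$ you build sends $[0,\tfrac12]$ linearly onto $[0,s]$ and $[\tfrac12,1]$ linearly onto $[t,1]$; since $s<t$, this $\varphi$ has a jump discontinuity at $\tfrac12$, so by your own definition of $\preceq$ (which requires continuity of the reparametrizing map) you have not established $g_1\preceq g_0$. This is not a cosmetic issue: any continuous non-decreasing $\varphi\colon[0,1]\to[0,1]$ with $\varphi(0)=0$ and $\varphi(1)=1$ is automatically surjective by the intermediate value theorem, so $h\circ\varphi$ always has the \emph{same} image as $h$. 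Hence the image $g([0,1])$ is constant along the $\preceq$-order, and if the original $f$ traces out a set that is not an arc (say a segment with a circle attached, with $a$ at the free end and $b$ on the circle, $f$ going once around the loop) then no $g\preceq f$ can be injective; your argument supplies no reason why a minimal element of $\mathcal F$ should lie outside this class.

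One might try to salvage the scheme by allowing discontinuous non-decreasing $\varphi$ subject only to $h\circ\varphi$ being continuous, but then the chain-lower-bound step (which you already flag as the main obstacle and treat only in outline) becomes correspondingly harder, and your sketch is too vague to assess. The standard route, taken in Willard, is entirely different: $K=f([0,1])$ is a Hausdorff continuous image of a compact metric space, hence itself compact metrizable; it is connected and, being a quotient of $[0,1]$, locally connected. Thus $K$ is a Peano continuum, and one invokes the classical theorem that Peano continua are arcwise connected.
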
  

 \begin{remark} \label{remark:path-conn}
   If $Y$ is a pathwise connected space, then so is $\mathcal F_n(Y)$ for each $n \in \mathbb N$ \cite[(a), p.~877]{Borsuk-Ulam}. Therefore $\mathcal F (Y)$ is pathwise connected.
\end{remark}

We will use the following notation frequently.

\begin{notation}\label{unique-arcs}
   Let $X$ be uniquely arcwise connected space and let $x,y\in X$. If $x \neq y$, then the unique arc between $x$ and $y$ in $X$ will be denoted by $[x,y]$. If $x = y$, then  $[x,y]$ will denote the set $ \{x\}$.
\end{notation}


\section{Path connectedness of $\mathcal \mathcal{S}_c(X)$}

In this section we address Question~\ref{preg:garcia} and Problem~\ref{probl:garcia}. Namely, we obtain  necessary conditions on a space $X$ to obtain the path connectedness of $\mathcal \mathcal{S}_c(X)$; we also obtain a sufficient condition and present several examples.


\subsection{Auxiliary results}

\begin{lemma} \label{lema:mathcalK-path-conn}
   Let $X$ be a pathwise connected space. Define $\mathcal A = \{ A \subseteq X : A $ is an arc$\}$ and $\mathcal L = \{S \in \mathcal S _c(X) : $ there exists  $\mathcal B\in [\mathcal A]^{<\omega}$ such that $\bigcup \mathcal B$ is connected and $S \subseteq \bigcup \mathcal B\}$. Then:
   
   (i) $\mathcal L$ is pathwise connected,
   
   (ii)  if $F \in \mathcal F (X)$,  if $A \in \mathcal A$ and if $S \in \mathcal S _c(A)$, then $S \cup F \in \mathcal L$ and
   
   (iii) if $\mathcal{D}$ is the path component of $\mathcal{S}_c(X)$ such that $\mathcal{L}\subseteq \mathcal{D}$, then $\mathcal{D}$ is dense in $\mathcal{S}_c(X)$.
\end{lemma}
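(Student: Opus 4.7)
The plan is to prove (ii) first, since it feeds both (iii) and (i), then (iii) as a short consequence, and finally (i), which is the main technical step.

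For (ii), I would first check that $S\cup F$ is a nontrivial convergent sequence with the same limit as $S$: it is countably infinite (as $S$ is), and every neighborhood of $\lim S$ contains cofinitely many elements of $S$, hence of $S\cup F$. To exhibit the required witness, for each $x\in F\setminus A$ I would invoke the pathwise connectedness of $X$ together with Theorem~\ref{lema:path-con-implica-arco-con} to pick an arc $A_x\subseteq X$ from $x$ to some point of $A$. Then $\mathcal B:=\{A\}\cup\{A_x:x\in F\setminus A\}$ is a finite family of arcs whose union is connected (each $A_x$ meets the connected set $A$) and contains $S\cup F$, so $S\cup F\in\mathcal L$.

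For (iii), by Proposition~\ref{pro:UCXbase} the basic open sets of $\mathcal S_c(X)$ have the form $\langle\mathcal U\rangle_c$ with $\mathcal U=\{U_1,\ldots,U_k\}\in\mathfrak C(X)$, and it suffices to meet every nonempty such set with $\mathcal D$. We may assume $\mathcal S_c(X)\neq\emptyset$ (else the statement is vacuous), so $X$ is nondegenerate and pathwise connected. For each $i$, I would find an arc inside $U_i$ as follows: fix $p_i\in U_i$, choose a nondegenerate arc $\alpha_i$ in $X$ starting at $p_i$ (by Theorem~\ref{lema:path-con-implica-arco-con}), and take a short initial portion that stays in $U_i$ (using openness of $U_i$ and continuity of $\alpha_i$), giving an arc $A_i\subseteq U_i$. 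Picking a nontrivial convergent sequence $S_0\subseteq A_1$ and points $y_i\in A_i$ for $i=2,\ldots,k$, the sequence $S:=S_0\cup\{y_2,\ldots,y_k\}$ lies in $\langle\mathcal U\rangle_c$ and, by (ii), in $\mathcal L\subseteq\mathcal D$.

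For (i), the first step is to unite the witnesses: pick $p_j\in Y_j:=\bigcup\mathcal B_j$ and an arc $C$ from $p_1$ to $p_2$ (Theorem~\ref{lema:path-con-implica-arco-con}), and set $\mathcal B:=\mathcal B_1\cup\mathcal B_2\cup\{C\}$, $Y:=\bigcup\mathcal B$. Then $Y$ is a connected finite union of arcs containing both $S_1$ and $S_2$, and every element of $\mathcal S_c(Y)$ belongs to $\mathcal L$ via the witness $\mathcal B$; hence it suffices to produce a path from $S_1$ to $S_2$ inside $\mathcal S_c(Y)$. My approach would be to fix one arc $A^*\in\mathcal B$ together with a distinguished sequence $S^*\in\mathcal S_c(A^*)$, show that each $S\in\mathcal S_c(Y)$ is path-connected in $\mathcal S_c(Y)$ to $S^*$ by continuously transporting each point of $S$ along arcs in $Y$ toward $A^*$, and finish using the pathwise connectedness of $\mathcal S_c(A^*)\cong\mathcal S_c([0,1])$ (known from \cite[Theorem~2.4]{Garcia2015}). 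The main obstacle is the Vietoris-continuity of this transport: the time-$t$ configuration must remain a nontrivial convergent sequence at every $t$, and Vietoris-open conditions must translate into open-in-$t$ conditions, which forces a uniform control on the transporting arcs obtained from the compactness of $Y$ combined with its finite-union-of-arcs structure.
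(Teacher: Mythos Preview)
Your treatment of (ii) is correct and is exactly what the paper leaves to the reader.

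For (iii) your argument is correct but differs from the paper's. You show density by meeting every nonempty basic Vietoris open $\langle\mathcal U\rangle_c$ with an element of $\mathcal L$ built from arcs inside the $U_i$. The paper instead fixes an arbitrary $S\in\mathcal S_c(X)$, picks an arc $A$ and $R\in\mathcal S_c(A)$ with $\lim R=\lim S$, and defines $R_n=\{s_0,\dots,s_{n-1}\}\cup\{r_i:i\ge n\}\cup\{r_\omega\}$; by (ii) each $R_n\in\mathcal L$, and $R_n\to S$ in the Vietoris topology. Both approaches prove that $\mathcal L$ itself is dense; yours is perhaps slightly more direct, the paper's avoids having to manufacture arcs inside arbitrary open sets.

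For (i) your first step---joining the two witness families by an arc $C$ to obtain a single connected finite union of arcs $Y\supseteq S_1\cup S_2$ with $\mathcal S_c(Y)\subseteq\mathcal L$---is exactly what the paper does. The divergence is in the second step: the paper simply invokes \cite[Lemma~2.11]{Garcia2015}, which states that $\mathcal S_c(Y)$ is pathwise connected whenever $Y$ is a connected finite union of copies of $[0,1]$, and is done. You instead attempt to reprove this result by a point-transport argument toward a fixed arc $A^*$, and you yourself identify the obstacle (Vietoris-continuity of the transport, keeping the image infinite at every time) without resolving it. That obstacle is real---it is essentially the content of \cite[Lemma~2.11]{Garcia2015}---and your sketch does not supply the needed uniform control. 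So as written there is a gap in (i); the fix is not to carry out the transport but to cite \cite[Lemma~2.11]{Garcia2015} (you already cite \cite[Theorem~2.4]{Garcia2015} from the same paper, so the reference is available to you).
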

   \begin{proof}
     First we prove $(i)$, to this end fix $S_0, S_1 \in \mathcal L$. It is easy to show that there exists a finite subset $\mathcal B$ of $\mathcal A$    such that $\bigcup \mathcal B$ is connected and $S_0 \cup S_1 \subseteq \bigcup \mathcal B$. According to \cite[Lemma~2.11]{Garcia2015} the hyperspace $\mathcal S_c( \bigcup \mathcal B)$ is pathwise connected; since the latter is contained in $\mathcal L$ and   $S_0, S_1 \in \mathcal S_c( \bigcup \mathcal B)$, we conclude that $\mathcal L$ is pathwise connected. The proof of $(ii)$ is easy and left to the reader.
     
     Let $S\in \mathcal{S}_c(X)$ be arbitrary. Suppose that $\{s_i : i\in \omega+1\}$ is an adequate enumeration of $S$. Since $X$ is pathwise connected, there exist $A\in\mathcal{A}$ and $R\in \mathcal{S}_c(A)$, such that  $\lim R=s_{\omega}$. Notice that $R\in\mathcal{L}$. Let $\{r_i : i\in \omega+1\}$ be an adequate enumeration of $R$. Set $R_n=\{s_0,...,s_{n-1}\}\cup \{r_i : i\geq n\} \cup \{r_\omega\}$, for each $n\in\omega$. By (ii), $R_n\in \mathcal{L}$. Observe that $\lim_{i\to\infty}R_i=S$. Therefore, $S\in\mathrm{cl}_{\mathcal{S}_c(X)}(\mathcal{L})$ and $\mathcal{D}$ is dense in $\mathcal{S}_c(X)$.
   \end{proof}

\begin{theorem}\label{theo:ScXFX}
Let $X$ be a pathwise connected space, let $R \in \mathcal{S}_c(X)$ and let $m \in \mathbb{N}$. Then $R \in \mathcal{D}$ if and only if there exists a path $\lambda: [0,1] \to \mathcal{S}_c(X) \cup \mathcal{F}_m(X)$ such that $\lambda(0) = R$ and $\lambda^{-1}[ \mathcal{F}_m(X)] = \{1\}$.
\end{theorem}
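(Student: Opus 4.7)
The plan is to prove both implications directly. For $(\Rightarrow)$, I would build $\lambda$ by concatenating three paths: one from $R$ into $\mathcal{L}$, one inside a finite union of arcs carrying the intermediate sequence, and a final linear collapse inside a single arc reaching $\mathcal{F}_m(X)$ only at time $1$. For $(\Leftarrow)$, I would show that $\lambda(t_0)\in\mathcal{D}$ for some $t_0<1$; the restriction $\lambda|_{[0,t_0]}$ is then a path in $\mathcal{S}_c(X)$ forcing $R\in\mathcal{D}$.

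For $(\Rightarrow)$, since $\mathcal{L}\subseteq\mathcal{D}$, pick a path $\eta_1\colon[0,1/3]\to\mathcal{S}_c(X)$ with $\eta_1(0)=R$ and $\eta_1(1/3)=S\in\mathcal{L}$. Write $S\subseteq\bigcup\mathcal{B}$ with $\mathcal{B}$ a finite family of arcs and $\bigcup\mathcal{B}$ connected. By \cite[Lemma~2.11]{Garcia2015} the hyperspace $\mathcal{S}_c(\bigcup\mathcal{B})$ is pathwise connected; fix $A\in\mathcal{B}$, identify $A$ with $[0,1]$ via a homeomorphism, and set $T=\{0\}\cup\{1/n:n\in\mathbb{N}\}\in\mathcal{S}_c(A)$. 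Take any path $\eta_2\colon[1/3,2/3]\to\mathcal{S}_c(\bigcup\mathcal{B})\subseteq\mathcal{S}_c(X)$ from $S$ to $T$, and define $\eta_3(t)=\{3(1-t)y:y\in T\}\cup\{0\}$ for $t\in[2/3,1]$. Then $\eta_3$ is a path in $\mathcal{S}_c(A)\cup\mathcal{F}_1(A)\subseteq\mathcal{S}_c(X)\cup\mathcal{F}_m(X)$ from $T$ to $\{0\}$ with $\eta_3^{-1}[\mathcal{F}_m(X)]=\{1\}$, and the concatenation $\eta_1\ast\eta_2\ast\eta_3$ is the required $\lambda$.

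For $(\Leftarrow)$, write $F=\lambda(1)=\{p_1,\ldots,p_k\}$ with $k\le m$, pick pairwise disjoint open neighborhoods $U_i\ni p_i$, and use continuity of $\lambda$ to select $t_0<1$ with $S_0:=\lambda(t_0)\in\langle U_1,\ldots,U_k\rangle_c$; then $s_\omega:=\lim S_0$ lies in some $U_{j_0}$ and all but finitely many points of $S_0$ lie in $U_{j_0}$. Apply Theorem~\ref{teor:trayec-gamma-enX} to $\lambda|_{[t_0,1]}$: for each $p\in S_0$ there is a path $\beta_p\colon[t_0,1]\to X$ with $\beta_p(t_0)=p$ and $\beta_p(1)\in F$. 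In particular, the image of $\beta_{s_\omega}$ contains (by Theorem~\ref{lema:path-con-implica-arco-con}) an arc $A^*$ from $s_\omega$ to some $p_j\in F$; using path-connectedness of $X$ I also pick arcs joining the points of $F$ pairwise. My intended target in $\mathcal{L}$ is a set of the form $\tau\cup F^*$, where $\tau\in\mathcal{S}_c(A^*)$ converges to $s_\omega$ and $F^*$ is a finite set of outlier points, which lies in $\mathcal{L}$ by Lemma~\ref{lema:mathcalK-path-conn}(ii).

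The main obstacle is to realize the passage from $S_0$ to this target as a genuine continuous path in $\mathcal{S}_c(X)$. One must simultaneously slide the finitely many outlier points of $S_0$ (those in $U_i$ for $i\ne j_0$) along arcs into their target positions, and replace the cofinite tail of $S_0$ by the new tail $\tau$ inside $A^*$ converging to $s_\omega$, all while keeping the moving set a nontrivial convergent sequence and the map Vietoris-continuous. In the absence of local path-connectedness of $X$, controlling the motion of the infinite tail is delicate; this is where one must exploit the full family of paths $\beta_p$, which supply the coordinated ``short'' motions that collectively carry the tail into $A^*$.
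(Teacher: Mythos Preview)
Your forward direction $(\Rightarrow)$ is correct and matches the paper's argument: connect $R$ to a sequence in a single arc via $\mathcal{D}$, then collapse linearly inside that arc, hitting $\mathcal{F}_m(X)$ only at the endpoint.

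Your backward direction $(\Leftarrow)$ has a genuine gap, which you essentially acknowledge. The plan of picking $t_0<1$, setting $S_0=\lambda(t_0)$, and then trying to slide the infinite tail of $S_0$ into an arc $A^*$ using the paths $\beta_p$ from Theorem~\ref{teor:trayec-gamma-enX} does not go through as stated. The paths $\beta_p$ end in the finite set $F=\lambda(1)$, not in a sequence on $A^*$ converging to $s_\omega$, so following them does not produce your target $\tau\cup F^*$. Worse, for intermediate $t$ the set $\{\beta_p(t):p\in S_0\}$ is only known to be a \emph{subset} of $\lambda(t)$; there is no reason it is all of $\lambda(t)$, nor that it is itself a nontrivial convergent sequence, so the map $t\mapsto\{\beta_p(t)\}$ need not land in $\mathcal{S}_c(X)$. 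Without local path connectedness you have no control over how the tail moves, and the $\beta_p$'s do not supply it.

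The paper avoids this obstacle with a different and much cleaner idea: it never tries to move the tail of any fixed $\lambda(t_0)$. Instead it uses the \emph{entire} path $\lambda$, reparametrized as $t\mapsto\lambda(1-t)$, and unions it with a growing finite set drawn from a fixed auxiliary arc $L$ through a point $x\in R$. Concretely, for $t\in[\tfrac{1}{n+1},\tfrac{1}{n}]$ one sets
\[
\alpha(t)=\lambda(1-t)\cup\{\beta(t)\}\cup\{\beta(1/k):1\le k\le n\},
\]
where $\beta\colon[0,1]\to L$ parametrizes $L$. For $t>0$ the term $\lambda(1-t)$ is already a nontrivial convergent sequence, and one is merely adding finitely many points. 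As $t\to 0$ the roles swap: $\lambda(1-t)\to\lambda(1)\in\mathcal{F}_m(X)$ becomes finite, while the added points accumulate to the sequence $\{\beta(0)\}\cup\{\beta(1/k):k\in\mathbb{N}\}\in\mathcal{S}_c(L)$. Thus $\alpha(0)$ is of the form (sequence in an arc)$\,\cup\,$(finite set), hence lies in $\mathcal{L}$ by Lemma~\ref{lema:mathcalK-path-conn}(ii), and $\alpha(1)=\lambda(0)\cup\{x\}=R$. The hypothesis $\lambda^{-1}[\mathcal{F}_m(X)]=\{1\}$ with a \emph{fixed} $m$ is exactly what makes continuity at $t=0$ work. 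This ``swap'' trick is the missing idea in your proposal.
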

\begin{proof}
Assume first that   $R\in\mathcal{D}$. Let $\mathcal A$ be as in Lemma~\ref{lema:mathcalK-path-conn} and fix $A \in \mathcal A$. It is not difficult to show that if $S\in \mathcal{S}_c(A)$,  then there exists a path $\alpha\colon [0,1] \to\mathcal{S}_c(A)\cup \mathcal{F}_1(A)$, such that $\alpha(0)=S$ and $\alpha^{-1}[ \mathcal{F}_1(A)] = \{1\}$. Moreover, there exists a path in $\mathcal S_c(X)$ from $R$ to $S$.  Thus,  there exists a path  $\lambda: [0,1] \to \mathcal{S}_c(X) \cup \mathcal{F}_m(X)$ fulfilling all our requirements.


Conversely, let $\lambda : [0,1] \to \mathcal{S}_c(X) \cup \mathcal{F}_m(X)$ be such that $\lambda(0) = R$ and $\lambda^{-1}[\mathcal{F}_m(X)] = \{1\}$.  Since $X$ is pathwise connected, we may fix $x \in R$ and an arc $L \subseteq X$, with end points $x $ and some distinct point $a$, and choose a homeomorphism $\beta: [0,1] \to L$ such that $\beta(0) = a$ and $\beta (1) = x$.       
   For each $n \in \mathbb N$  define $\alpha_n : [\frac{1}{n+1},\frac{1}{n}] \to   \mathcal S _c(X) $ by
   \[     \alpha_n(t) =  \lambda(1-t) \cup \{ \beta (t)\}  
             \cup   \left\{ \beta \left( \frac{1}{k} \right) : 
                      k \in \{1, \dots , n\} \right\}. \]
Observe that $\alpha_n$ is well defined, continuous and the equality $\alpha_n ( \frac{1}{n+1})  =   \alpha_{n+1} ( \frac{1}{n+1})$ holds for each $n \in \mathbb N$. Next define $\alpha : [0,1] \to \mathcal \mathcal{S}_c(X)$ as follows: 
           \begin{equation*}
               \alpha(t) = \left\{ \begin{array}{lll}
                   \lambda(1) \cup \{a\} \cup \left\{ \beta \left( \frac{1}{k}\right) :
                               k \in \mathbb N \right\} ,    & \textrm{if} & t =0; \\
                    \alpha_n(t), & \textrm{if} & t \in [\frac{1}{n+1},\frac{1}{n}], \   
                    \textrm{ for some } n \in \mathbb N.
          \end{array} \right.
       \end{equation*}   
Using  the fact that $\lambda(1) \in \mathcal{F}_m(X)$ one readily sees that $\alpha $ is well defined. Further, as a consequence of \cite[Theorem~18.3, p.~108]{Munkres} one can show that it is continuous in $(0,1]$. In order to check that $\alpha$  is continuous at $t=0$, fix $\mathcal W \in \mathfrak{C}(X)$ such that $ \alpha(0) \in \langle \mathcal W \rangle _c  $ (see Proposition~\ref{pro:UCXbase}). Define $\mathcal U = \{V \in \mathcal W : V \cap \lambda (1) \ne \emptyset\}$. Observe that $\lambda (1) \in \bigcap \{V_c^- : V \in \mathcal U\}$. Let $W \in \mathcal W$ be such that $a \in W$ and pick $N \in \mathbb N$ such that
    \begin{equation} \label{eq:le-maroc}
       \beta\left[ \left[0,\frac{1}{N}\right] \right] \subseteq W \ \ \text{  and } \ \ \lambda \left[  
        \left[1-\frac{1}{N},1  \right] \right] \subseteq \left( \bigcup \mathcal W \right)^+_c \cap \left( \bigcap \{V_c^- : V \in \mathcal U\} \right). 
    \end{equation}    
Fix $t \in [0, \frac{1}{N} \big)$. A routine argument shows that $\alpha(t) \subseteq \bigcup \mathcal W $. Now let $V \in \mathcal W$, then $V \cap \alpha(0) \ne \emptyset$. If $\beta (\frac{1}{k}) \in V$ for some $k \in \mathbb N$, the cellularity of $\mathcal W$ and the choice of $N$ guarantee  that $V \cap \alpha (t) \ne \emptyset$. Hence,  we may assume that $\lambda(1) \cap V \neq \emptyset$. This means that $V \in \mathcal{U}$. Thus, $\lambda(1-t) \in V^-_c$ and so, $\lambda(1-t) \cap V \neq \emptyset$. 
We have proved that $\alpha(t) \in \langle \mathcal W \rangle _c$.
Therefore, using the second part of Lemma~\ref{lema:mathcalK-path-conn}, we conclude that $\alpha$ is a path in $\mathcal \mathcal{S}_c(X)$ that joins $R$ with an element of $\mathcal D$.
\end{proof}

The authors do not know the answer to the following question.

\begin{question}
	Let $X$ be a pathwise connected space and let $R\in\mathcal{S}_c(X)$. Suppose that there exists a path $\lambda\colon [0,1]\to \mathcal{S}_c(X)\cup\mathcal{F}(X)$ such that $\lambda(0)=R$ and $\lambda[[0,1]]\cap \mathcal{F}(X)\neq\emptyset$. Does it follow that $R\in\mathcal{D}$?
\end{question}


\subsection{A sufficient condition for the path connectedness of $\mathcal S_c(X)$}

In this subsection we address Problem~\ref{probl:garcia}, by showing that contractible,  first countable  spaces $X$ have pathwise connected hyperspace  $\mathcal S_c(X)$. We also show that the converse is not true.

   \begin{lemma} \label{lema:vecindades-locales-1}
    Let $X$ be a pathwise connected space,   let $y_\omega \in X$ and assume that $\{V_n : n \in \mathbb N\}$ is a countable local base at $y_\omega$ with $V_1 = X$. For each $n \in \mathbb N$ we will assume that $V_{n+1} \subseteq V_n$ and we will denote by $C_n$ the path component of $V_n$ that contains $y_\omega$.
Let  $ \{ y_n : n \in (\omega +1 ) \setminus \{0\}\}$ and $ \{ x_n : n \in (\omega +1 ) \setminus \{0\}  \}$ be adequate enumerations of two sequences $S_y$ and $S_x$, respectively, such that $x_\omega = y_\omega$. Further, assume that
  \[ \textrm{(*) for each $n \in \mathbb N$ there exists $m_n \in \mathbb N$ such that $ y_r , x_r \in C_n$ for all $r \ge m_n$.}\]
Then  there exists a path in $  \mathcal S_c (X) $ from $S_y$ to $S_x $.
\end{lemma}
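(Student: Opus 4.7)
My plan is to construct a path $\alpha\colon [0,1]\to \mathcal{S}_c(X)$ that transforms $S_y$ into $S_x$ one element at a time: during the subinterval $I_n=[1-\tfrac1n,\,1-\tfrac1{n+1}]$ I replace $y_n$ with $x_n$ via a path $\gamma_n$ in a sufficiently small path-connected neighborhood, keeping every other element of the current sequence fixed. Because the $\gamma_n$ for large $n$ stay in arbitrarily small neighborhoods of $y_\omega$, the countably many transformations will accumulate correctly at $t=1$. The main obstacle is verifying left-continuity of $\alpha$ at $t=1$; continuity on $[0,1)$ reduces to the pasting lemma together with Lemma~\ref{lema:union-finita-cont}.

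\emph{Setup.} Since $V_{n+1}\subseteq V_n$, each $C_{n+1}$ is a pathwise connected subset of $V_n$ containing $y_\omega$, hence $C_{n+1}\subseteq C_n$. For each $k\in\mathbb{N}$ let $k^\star=\max\{n\in\mathbb{N}:y_k,x_k\in C_n\}$; this maximum exists because $C_1=X$. Hypothesis (*) translates into $k^\star\to\infty$: given $n$, one has $k^\star\ge n$ for every $k\ge m_n$. Using that $C_{k^\star}$ is pathwise connected, choose a path $\gamma_k\colon [0,1]\to C_{k^\star}$ from $y_k$ to $x_k$; note $\gamma_k([0,1])\subseteq V_{k^\star}$.

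\emph{Definition of $\alpha$.} Let $s_n\colon I_n\to[0,1]$ be the affine homeomorphism with $s_n(1-\tfrac1n)=0$ and $s_n(1-\tfrac1{n+1})=1$. For $t\in I_n$ set
\[ \alpha(t)=\{x_j:1\le j<n\}\cup\{\gamma_n(s_n(t))\}\cup\{y_k:k>n\}\cup\{y_\omega\}, \]
and set $\alpha(1)=S_x$. The two clauses match at every junction $t=1-\tfrac1n$ (both give $\{x_j:j<n\}\cup\{y_k:k\ge n\}\cup\{y_\omega\}$), and one checks $\alpha(0)=S_y$. Each $\alpha(t)$ is infinite because it contains the tail $\{y_k:k>n\}$, and has $y_\omega$ as its limit: for each $V_N$, the transitional point $\gamma_n(s_n(t))$ lies in $V_{n^\star}\subseteq V_N$ once $n^\star\ge N$, and $y_k\in V_N$ for large $k$, so only finitely many points of $\alpha(t)$ lie outside $V_N$. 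Hence $\alpha(t)\in\mathcal{S}_c(X)$.

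\emph{Continuity.} On each open subinterval $(1-\tfrac1n,1-\tfrac1{n+1})$ the only varying summand of $\alpha(t)$ is $\{\gamma_n(s_n(t))\}$, so Lemma~\ref{lema:union-finita-cont} gives continuity there; the pasting lemma then yields continuity on $[0,1)$. For continuity at $t=1$, fix a basic neighborhood $\langle\mathcal U\rangle_c$ of $S_x$ with $\mathcal U\in\mathfrak C(X)$ (Proposition~\ref{pro:UCXbase}), and let $W\in\mathcal U$ be the unique element containing $y_\omega$. Choose $N\in\mathbb N$ so large that $V_N\subseteq W$, that $x_k,y_k\in W$ for every $k\ge N$ (possible since $x_k,y_k\to y_\omega$), and that $k^\star\ge N$ for every $k\ge N$ (possible since $k^\star\to\infty$), so $C_{k^\star}\subseteq C_N\subseteq V_N\subseteq W$ for all $k\ge N$. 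Then for $t\in I_n$ with $n\ge N$, every element of $\alpha(t)$ other than $x_1,\dots,x_{N-1}$ lies in $W\subseteq\bigcup\mathcal U$, so $\alpha(t)\subseteq\bigcup\mathcal U$; and for each $U\in\mathcal U$, either $U=W$ (so $y_\omega\in\alpha(t)\cap U$) or $U$ meets $S_x$ in some $x_j$ with $j<N\le n$ (so $x_j\in\alpha(t)\cap U$). Hence $\alpha(t)\in\langle\mathcal U\rangle_c$, completing the continuity verification.
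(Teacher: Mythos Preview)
Your proof is correct and follows essentially the same approach as the paper's: define $k^\star=M(k)$, replace $y_k$ by $x_k$ one at a time via a path in $C_{k^\star}$, and verify continuity at the accumulation point of the subintervals using the fact that $k^\star\to\infty$. The only cosmetic difference is the direction of parametrization: the paper uses subintervals $[\tfrac{1}{n+1},\tfrac{1}{n}]$ accumulating at $t=0$ (so that $\gamma(0)=S_y$, $\gamma(1)=S_x$, and continuity is checked at $0$), whereas your $I_n$ accumulate at $t=1$.
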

   \begin{proof}
       For each $n \in \mathbb{N}$  define $M(n) = \max\{k \in \mathbb{N} : x_n , y_n \in C_k\}$, then there exists a path $\alpha _n : [\frac{1}{n+1} , \frac{1}{n}] \to V_{M(n)}$ such that  $\alpha _n(\frac{1}{n+1}) = y_n$ and $\alpha_n( \frac{1}{n} ) = x_n$. Note that $M(n)$ tends to infinity as $n$ tends to infinity.
    
   Next, for each $n \in \mathbb N$  define $g_n : [\frac{1}{n+1},\frac{1}{n}] \to   \mathcal S _c(X) $ by
   \[     g_n(t) =      \{x_m : m \ge n+1 \} \cup \{ \alpha_n(t) \}   \cup  \{y_m : 1 \le m < n \}  . \]
Observe that $g_n$ is well defined and continuous.  Also note that
  \begin{equation} \label{eq:g-n-rollote-simplif}
     g_n \left( \frac{1}{n+1}\right)  =  \{x_m : m \ge n+1 \} \cup  \{y_m : 1 \le m \le n \}  =  g_{n+1} \left( \frac{1}{n+1}\right). 
  \end{equation}                           
Now define $\gamma :[0, 1] \to   \mathcal S _c(X) $ by
     \begin{equation*}  
         \gamma(t) = \left\{ \begin{array}{lll}
               g_n(t),      & \textrm{if} & t \in [\frac{1}{n+1},\frac{1}{n}] \textrm{ for some } n \in \mathbb N; \\
            \{ y_n : n \in \omega +1  \}  , & \textrm{if} & t =0.
          \end{array} \right.
       \end{equation*}
It is not difficult to verify that $\gamma(0) = S_y  $ and $\gamma (1) = g_1( 1) = S_x $.  
          
According to (\ref{eq:g-n-rollote-simplif}) we have that $\gamma$ is well-defined, and using  \cite[Theorem~18.3, p.~108]{Munkres} one can show that it is continuous in $(0,1]$. In order to check that $\gamma$  is continuous at $t=0$, fix a finite cellular family $\mathcal W$ such that $S_y  = \gamma(0) \in \langle \mathcal W \rangle _c  $ (see Proposition~\ref{pro:UCXbase}).
Let $W \in \mathcal W$ such that $y_\omega \in W$ and take $N \in \mathbb N$ such that $ V_{M(n)} \subseteq W$ whenever $n \ge N$. 
Observe that $x_n , y_n \in V_{M(n)} \subseteq W$ for all $n \ge N$.
Hence, if $t \in [0, \frac{1}{N})$ a routine argument shows that $\gamma(t) \in \langle \mathcal W \rangle _c$ and, thus, $\gamma$ is a path in $\mathcal \mathcal{S}_c(X)$  from $S_y $ to $S_x  $.
   \end{proof}

\begin{corollary} \label{corol:loc-path-con-1onum}
	Let $X$ be a pathwise connected space and let $p\in X$. Assume that  $X$ is first countable at $p$. If $X$ is locally pathwise connected   at $p$ and  if $R, S  \in \mathcal \mathcal{S}_c(X)$ are such that $R\to p$ and $S\to p$, then there exists a path in $\mathcal \mathcal{S}_c(X)$ from $R$ to $S$.
\end{corollary}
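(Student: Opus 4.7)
The plan is to reduce this immediately to Lemma~\ref{lema:vecindades-locales-1}. All I need to do is exhibit a decreasing countable local base $\{V_n:n\in\mathbb{N}\}$ at $p$ with $V_1=X$ for which the technical hypothesis (*) holds for adequate enumerations of $R$ and $S$; then the lemma produces the path from $R$ to $S$ in $\mathcal{S}_c(X)$.

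First I would construct the local base. Since $X$ is first countable at $p$, pick any countable local base $\{U_n:n\in\mathbb{N}\}$ at $p$, and set $V_1=X$ and $V_{n+1}=\bigcap_{k=1}^{n}U_k$ for $n\in\mathbb{N}$. Each $V_n$ is an open neighborhood of $p$, the family is nested ($V_{n+1}\subseteq V_n$), and it is still a local base at $p$. Denote by $C_n$ the path component of $V_n$ containing $p$, as in the lemma.

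The key step is verifying condition (*). Fix $n\in\mathbb{N}$. Because $X$ is locally pathwise connected at $p$, there is a pathwise connected neighborhood $W$ of $p$ with $W\subseteq V_n$. Since $W$ contains $p$, is pathwise connected, and is contained in $V_n$, we have $W\subseteq C_n$; in particular, $\operatorname{int}_X(W)$ is an open subset of $X$ with
\[
 p\in\operatorname{int}_X(W)\subseteq C_n.
\]
Now take adequate enumerations $R=\{y_k:k\in(\omega+1)\setminus\{0\}\}$ and $S=\{x_k:k\in(\omega+1)\setminus\{0\}\}$, so that $y_\omega=p=x_\omega$. Since $R\to p$ and $S\to p$, all but finitely many $y_k$ and $x_k$ belong to $\operatorname{int}_X(W)\subseteq C_n$; hence there exists $m_n\in\mathbb{N}$ such that $y_r,x_r\in C_n$ for every $r\geq m_n$, which is exactly (*).

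With the hypotheses of Lemma~\ref{lema:vecindades-locales-1} verified (pathwise connectedness of $X$ is given, $y_\omega=x_\omega$, the base is decreasing with $V_1=X$, and (*) holds), the lemma yields a path in $\mathcal{S}_c(X)$ from $S_y=R$ to $S_x=S$, finishing the proof. The only point that requires any care is the use of local pathwise connectedness \emph{at the single point} $p$ to place a neighborhood of $p$ inside the path component $C_n$; once that is observed, convergence of both sequences to $p$ does all the remaining work, and no separate argument for continuity of the path is needed, as it is entirely absorbed into Lemma~\ref{lema:vecindades-locales-1}.
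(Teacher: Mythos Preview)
Your proposal is correct and follows exactly the approach the paper intends: the paper states this result as an immediate corollary of Lemma~\ref{lema:vecindades-locales-1} without further proof, and your argument simply fills in the routine verification that local pathwise connectedness at $p$ forces each $C_n$ to contain an open neighborhood of $p$, so that condition~(*) holds for any pair of sequences converging to $p$.
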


\begin{theorem} \label{teor:Xcontr-implica-Sc(X)-path-conn}
  If $X$ is a contractible, first countable space, then $\mathcal \mathcal{S}_c(X)$ is   pathwise connected.
\end{theorem}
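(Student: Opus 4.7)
The plan is to invoke Theorem~\ref{theo:ScXFX}: it suffices to show every $R \in \mathcal{S}_c(X)$ admits a path $\lambda\colon [0,1] \to \mathcal{S}_c(X) \cup \mathcal{F}_m(X)$ with $\lambda(0) = R$ and $\lambda^{-1}[\mathcal{F}_m(X)] = \{1\}$; then $R$ lies in the path component $\mathcal{D}$ of Lemma~\ref{lema:mathcalK-path-conn}, and since $R$ was arbitrary $\mathcal{S}_c(X) = \mathcal{D}$ is pathwise connected.

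Enumerating $R = \{x_n \colon n \in \omega+1\}$ adequately with $x_n \to x_\omega$, and writing $h$ for the contraction ($h(\cdot,0) = \mathrm{id}_X$, $h(\cdot,1) \equiv p$), I would build $\lambda$ as a \emph{staged contraction}. Partition $[0,1]$ by the successive closed intervals $I_n = [1 - 1/n,\, 1 - 1/(n+1)]$, $n \ge 1$, accumulating at $1$; on $I_n$ move only the $n$th point $x_n$ along its contraction path $s \mapsto h(x_n,s)$, from $x_n$ (at the left endpoint) to $p$ (at the right endpoint), leaving the other points of $R$ fixed. Writing $s_n\colon I_n \to [0,1]$ for the affine bijection, the formula is
\[
\lambda(t) = \{p\} \cup \{h(x_n, s_n(t))\} \cup \{x_k \colon k > n\} \cup \{x_\omega\}, \quad t \in I_n,
\]
with the summand $\{p\}$ suppressed at the very start so that $\lambda(0) = R$, and $\lambda(1) = \{p, x_\omega\} \in \mathcal{F}_2(X)$. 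The inner boundary gluings are immediate; for $t \in [0,1)$ the infinite tail $\{x_k \colon k > n\} \cup \{x_\omega\}$ keeps $\lambda(t) \in \mathcal{S}_c(X)$, and the boundary values $\lambda(1 - 1/(n+1)) = \{p\} \cup \{x_k \colon k > n\} \cup \{x_\omega\}$ already Vietoris-converge to $\{p, x_\omega\}$.

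The main obstacle is continuity of $\lambda$ at $t = 1$: in the interior of $I_n$ the moving point $h(x_n, s_n(t))$ sweeps the full image $h(x_n, [0,1])$, which by compactness of $R \times [0,1]$ and continuity of $h$ is, for large $n$, arbitrarily close to the fixed compact set $h(x_\omega, [0,1])$; in general this set need not be contained in any preassigned Vietoris neighborhood of $\{p, x_\omega\}$. To overcome this I would use first countability at $p$ and at $x_\omega$ together with the tube lemma. Fixing decreasing local bases $\{U_k\}$ at $p$ and $\{W_k\}$ at $x_\omega$, the tube lemma applied to $R \times \{1\} \subseteq h^{-1}[U_k]$ and to $R \times \{0\} \subseteq h^{-1}[W_k]$ yields $\tau_k, \sigma_k \in (0,1)$ (and a neighborhood $W'_k$ of $x_\omega$) with $h(R \times [\tau_k,1]) \subseteq U_k$ and $h(W'_k \times [0,\sigma_k]) \subseteq W_k$. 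The internal schedule on each $I_n$ is then refined so that $s_n(t)$ is forced into the ``safe'' windows $[0, \sigma_n] \cup [\tau_n, 1]$ except on a short middle subinterval whose contribution, by the uniform closeness of $h(x_n,\cdot)$ to $h(x_\omega,\cdot)$ on the compact $R \times [0,1]$, can be absorbed into a shrinking Vietoris neighborhood of $\{p, x_\omega\}$.

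Once the continuity of $\lambda$ at $t = 1$ is verified, $\lambda$ has the form required by Theorem~\ref{theo:ScXFX}, giving $R \in \mathcal{D}$ and hence the conclusion.
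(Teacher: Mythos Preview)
Your staged construction has a genuine gap at the continuity of $\lambda$ at $t=1$, and the scheduling fix you propose cannot close it. Take $X=[0,1]$, $p=0$, $x_\omega=1$, $x_n=1-\tfrac1n$ and $h(x,s)=(1-s)x$. For any continuous reparametrization $s_n\colon I_n\to[0,1]$ with the prescribed boundary values, the intermediate value theorem forces some $t_n\in I_n$ with $s_n(t_n)=\tfrac12$, and then the moving point equals $h(x_n,\tfrac12)=\tfrac12(1-\tfrac1n)$. Thus $\lambda(t_n)$ contains a point within $\tfrac1{2n}$ of $\tfrac12$, so $\lambda(t_n)\notin\langle\{[0,\tfrac14),(\tfrac34,1]\}\rangle$ while $t_n\to 1$; continuity at $1$ fails. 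The tube lemma only identifies the initial and terminal ``safe'' windows of the track $h(x_n,\cdot)$; it says nothing about the mandatory crossing between them. Once you choose a cellular basic neighborhood $\langle\{U,W\}\rangle$ of $\{p,x_\omega\}$ with $U\cap W=\emptyset$, the connected image $h(\{x_n\}\times[0,1])$ must leave $U\cup W$, so at that instant $\lambda(t)\not\subseteq U\cup W$. Uniform closeness of $h(x_n,\cdot)$ to $h(x_\omega,\cdot)$ does not help: $h(x_\omega,\cdot)$ itself is a path from $x_\omega$ to $p$ and is typically not contained in $U\cup W$.

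The paper's argument avoids this by contracting the whole sequence simultaneously rather than one point at a time. With the contraction arranged so that $H(\cdot,0)\equiv x_\omega$ and $H(\cdot,1)=\mathrm{id}_X$, one studies the continuous path $t\mapsto H[S_0\times\{t\}]\in\mathcal F(X)\cup\mathcal S_c(X)$ and sets $t_0=\sup\{t:H[S_0\times\{t\}]\in\mathcal F(X)\}$. If the supremum is attained, the restriction to $[t_0,1]$ already gives a path of the form required by Theorem~\ref{theo:ScXFX}. If not, first countability is invoked at the single point $z=H(x_\omega,t_0)$: one shows that for every basic neighborhood $V_n\ni z$ the tails $H(x_r,t_0)$ eventually lie in the $z$-path-component of $V_n$ (using nearby finite levels $t<t_0$), and then Lemma~\ref{lema:vecindades-locales-1} connects $H[S_0\times\{t_0\}]$ to a sequence contained in an arc. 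First countability thus enters at the limit of a \emph{contracted} sequence, not at $p$ and $x_\omega$ separately; this is precisely the leverage your one-point-at-a-time scheme lacks.
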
  
   \begin{proof}
      Let $\mathcal A$ and $\mathcal L$ be as in Lemma~\ref{lema:mathcalK-path-conn}. Fix $S_0 \in \mathcal S _c(X)$. We will show that $S_0$ may be joined by a path in $\mathcal S _c(X)$ with some element of $\mathcal L$.  In this proof we will assume that $ \{ x_n : n \in \omega +1\}$ is an adequate enumeration of $S_0$.
      
      By assumption there exists a homotopy $H : X \times [0,1] \to X$ such that $H(x,0) = x_\omega$ and $H(x,1) = x$ for all $x \in X$. Observe that the sequence $\big( H(x_n, t) \big)_{n \in \omega}$ converges to $H(x_\omega, t)$ for each $t \in [0,1]$; thus,
 $H \big[ S_0 \times \{t\} \big] \in \mathcal F(X) \cup \mathcal S _c(X)$ for each $t \in [0,1]$.
      Define $T_0 = \{ t \in [0,1] : H \big[ S_0 \times \{t\} \big] \in \mathcal F(X) \}$ and $t_0 = \sup T_0$.  We analyze two cases. \medskip
      
    \noindent  \textbf{Case 1}. $t_0 \in T_0$. \medskip
      
Set $m = \big| H[S_0 \times \{t_0\}] \big|$. The path $\lambda : [0,1] \to \mathcal{S}_c(X) \cup \mathcal{F}_m(X)$ defined by $\lambda(t) = H[S_0 \times \{(1-t) + t_0 \cdot t\}]$ satisfies all conditions in Theorem~\ref{theo:ScXFX}. So, we can conclude that $S_0 \in \mathcal{D}$. \medskip

      

     \noindent  \textbf{Case 2}.  $t_0 \notin T_0$. \medskip
   
In this case  $H \big[ S_0 \times \{t_0\} \big] \in   \mathcal S _c(X)$ and the point  $z= H( x_\omega, t_0)$ is the limit point of $H \big[ S_0 \times \{t_0\} \big]$. Fix a countable local base $\{V_n : n \in \mathbb N\}$ of $z$ with the property that $V_{n+1} \subseteq V_n$ for each $n$. Also, let $C_n$ be the path component of $V_n$ that contains $z$, for each $n \in \mathbb N$. \medskip
   
 \noindent  \textbf{Claim}. For each $n \in \mathbb N$ there exists $m_n \in \mathbb N$ such that $H ( x_r , t_0) \in C_n$ whenever $r \ge m_n$. \medskip
   
   Let $n \in \mathbb N$. Fix a finite cellular family $\mathcal U _n$ such that $H \big[ S_0 \times \{t_0\} \big] \in \langle \mathcal U _n \rangle _c$ and if $U _n$ is the element of $\mathcal U _n$ that contains $z$, then $U _n \subseteq V_n$. Fix  $t_n \in T_0$ such that $H \big[ S_0 \times [t_n ,t_0 ] \big] \subseteq \langle \mathcal U_n \rangle $.
   
   Since $H \big[ S_0 \times \{t_n\} \big]$ is finite and nonempty there exist $y \in H \big[ S_0 \times \{t_n\} \big]$ and an infinite subset $J$ of $\mathbb N$ such that $ H ( x_r, t_n  ) = y $   whenever  $r \in J$; hence $H(x_\omega, t_n) = y$. Thus, using again the continuity of $H$ there exists $m_n \in \mathbb N$ such that 
    \begin{equation} \label{eq:lahomotophaciay}
        H ( x_r , t_n) =y \textrm{  whenever } r \ge m_n.    
    \end{equation}
Now, observe that 
    \begin{equation} \label{eq:union-homot-dentrounion-vietor}
       \bigcup H \big[ S_0 \times [t_n ,t_0 ] \big] \subseteq \bigcup \mathcal U _n.
    \end{equation}    
The cellularity of $\mathcal U _n$ and the path connectedness of $H \big[ \{x_\omega\} \times [t_n ,t_0 ] \big]$ imply that $y \in U _n$; therefore, since $U _n \subseteq V_n$ we deduce  that $y \in C_n$. Similarly, using (\ref{eq:lahomotophaciay}) and (\ref{eq:union-homot-dentrounion-vietor}) we conclude that $H( x_r, t_0) \in C_n$ whenever $r \ge m_n$ and the claim is proved.
   
   \vspace{.2cm}      

   Let $A \subseteq X$ be an arc that contains $z$ and let $S_1 \in \mathcal S_c(A)$ be such that $\lim S_1 = z$. Note that $C_n$ contains a tail of $S_1$ for each $n \in \mathbb N$.
    Since $\lim S_1 = z = \lim H \big( S_0 \times \{t_0\} \big)$, by Lemma~\ref{lema:vecindades-locales-1} there exists a path in $  \mathcal S _c(X)$ from $S_1$ to $H \big[ S_0 \times \{t_0\} \big]$. According to the choice of $t_0$ we know that $H \big[ S_0 \times [t_0 , 1 ] \big] $ is  a path in $\mathcal \mathcal{S}_c(X)$ from $H \big[ S_0 \times \{t_0\} \big]$ to $S_0$; therefore,  there exists a path in $\mathcal \mathcal{S}_c(X)$ from $S_1$ to $S_0$.  The   result follows from the fact that  $S_1 \in \mathcal L$.  
   \end{proof}

\begin{question}
	Is it true that $\mathcal{S}_c(X)$ is pathwise connected whenever $X$ is contractible? In other words, can the  assumption on  first countability be removed in Theorem~\ref{teor:Xcontr-implica-Sc(X)-path-conn}?
\end{question}

\vspace{.2cm}      

Recall that a continuum $X$ is {\it unicoherent} if  each pair of subcontinua of $X$ whose union is $X$, has connected intersection. 
A continuum is {\it hereditarily unicoherent} if all its subcontinua are unicoherent. A continuum is a 
{\it dendroid} if it is arcwise connected and hereditarily unicoherent.
A {\it fan} is a dendroid that has exactly one ramification point.

According to Theorem~\ref{teor:Xcontr-implica-Sc(X)-path-conn},  the  contractibility of a space $X$ is a sufficient condition to obtain the path connectedness of $\mathcal \mathcal{S}_c(X)$, among first countable spaces. It is however not a necessary condition,  as the 1-sphere $S^1$ shows (see (i) of Lemma~\ref{lema:mathcalK-path-conn}). 
This situation 
is the same even for the class of dendroids, as we prove in our next example.
We will use the following notation: 

\begin{notation}\label{broken}
 If $p_1,\dotsc,p_n$ are points in $\mathbb{R}^2$, we denote by $p_1\cdots p_n$ the broken line with  vertices $p_1,\dotsc,p_n$. In particular $p_1p_2$ denotes the segment that joins $p_1$ and $p_2$. 
\end{notation}

\begin{example}
    There exists a noncontractible dendroid $X$ (even a fan) such that $\mathcal \mathcal{S}_c(X)$ is pathwise connected.
\end{example}    
   \begin{proof}
      For each $n \in \mathbb N$ set $\textstyle  L_n = (0,0)(1, \frac{1}{n})(1+\frac{1}{n},0)(1, -\frac{1}{n})(0, -\frac{1}{n})$. Define $L_0 = (0,0)(1,0)$ and $X = \bigcup \{L_n : n \in \omega\}$. Then  $X$ is  a noncontractible fan \cite[p.~95]{JJC-Eberhart}.

Consider two subsets of $X$ given by $ X^+ = \{ (x,y) \in X : y \ge 0\}$   and $X^- = X \setminus X^+$.
Set $  Q = \{ (a,F) : a \in (0,1] \text{ and } F \text{ is a finite subset of } \mathbb N\}$. Further, for each $(a, F) \in  Q$ define 
   \[ X(a,F) = X^+ \cup \{ (x,y) \in X^- : x \ge a\} \cup \bigcup \{L_n : n \in F\}.\]
Note that each $X(a,F)$ is contractible, thus $\mathcal S_c \big( X(a,F) \big)$ is  pathwise connected  (Theorem~\ref{teor:Xcontr-implica-Sc(X)-path-conn}). 
Hence, if we define $\mathcal Y = \bigcup \{ \mathcal S_c(X(a,F)) : (a,F) \in Q\}$, then $\mathcal Y$ is a pathwise connected subspace of $\mathcal \mathcal{S}_c(X)$.

Fix $S_1 \in \mathcal \mathcal{S}_c(X)$, we will show that there exists a path, in $\mathcal \mathcal{S}_c(X)$, from $S_1$ to some element of $\mathcal Y$.
Observe that if $\lim S_1 \ne (0,0)$, then $S_1 \subseteq X(a,F)$ for some $(a,F) \in  Q$. Thus, $S_1 \in \mathcal Y$. Also, if $\lim S_1 = (0,0)$ and $S_1 \cap X^-$ is finite, it follows easily that $S_1 \in \mathcal Y$. Hence, from now on we will assume that $\lim S_1 = (0,0)$ and that $S_1 \cap X^-$ is infinite.

Take a continuous function $H : X^+ \times [-1,0] \to X^+$ such that
   \begin{equation} \label{eq:condicdelaH}
       H(z,-1) = z, \ H(z,0) = (0,0) \ \text{ and } \ H\big( (0,0),t \big) = (0,0)
   \end{equation}
whenever  $z \in X^+$ and  $t \in [-1,0]$ . 
Consider the mapping $\beta:  ( X^- \cup \{(0,0)\}) \times [0,1] \to  X$ given by $\beta \big( (x,y),t\big) = \big(x(1-t) + t, y\big)$ if $x \le 1$ and by the identity otherwise.
Finally, define $\gamma :[-1,1] \to \mathcal \mathcal{S}_c(X)$ by 
        \begin{equation*}
         \gamma(t) = \left\{ \begin{array}{lll}
            \mathcal K(H) \big( (S_1 \cap X^+) \times \{t\} \big) \cup (S_1 \cap X^-),
                 & \textrm{if} & t \in [-1,0]; \\
            \mathcal K(\beta) \big( \big( (S_1 \cap X^-) \cup \{(0,0)\}\big) \times \{t\}\big), 
                 & \textrm{if} & t \in [0,1].
          \end{array} \right.
       \end{equation*}   
Using (\ref{eq:condicdelaH}), the continuity of $H$ and $\beta$ and our assumptions on $S_1$, one can show that $\gamma$ is well defined. Moreover,   Theorem~\ref{teor:func-ind-cont} and Lemma~\ref{lema:union-finita-cont} imply that $\gamma$ is  continuous.
Therefore, $\gamma$ is a path from $S_1$ to some element of $\mathcal Y$, as desired.
   \end{proof}


\subsection{Necessary conditions for the path connectedness of $\mathcal S_c(X)$}

 In this subsection, we present necessary conditions on a continuum $X$ to obtain the path connectedness of $\mathcal{S}_c(X)$ (Corollary~\ref{corolit875iii} and Theorem~\ref{theo:Ri}), thus giving partial answers to Question~\ref{preg:garcia}.

We start giving some definitions and auxiliary results.
We point out that in the main results of  this subsection, every space $X$ will be a continuum.

Given $\epsilon>0$ and a metric space $X$, set $\mathcal C_\epsilon(X) = \{ A \in \mathcal  C(X) : \text{diam} (A) < \epsilon \}$.

\begin{definition}\label{defUAC}
	Let $X$ be a metric space and let $\mathcal{A} \subseteq \mathcal C (X)$. We say that \textit{$\mathcal{A}$ has property S uniformly} (abbreviated: \textit{$\mathcal{A}$ has PSU}), provided that for each $\epsilon>0$, there exists $k\in\mathbb N$ such that if $A\in\mathcal{A}$, then there exists $\mathcal{B} \in [\mathcal C_\epsilon(X)]^{<k+1}$ satisfying that $A = \bigcup \mathcal{B}$.
\end{definition}

\begin{remark}\label{lem:UACsub}
Let $X$ be a metric space. If $\mathcal{F} \subseteq \mathcal{E} \subseteq \mathcal C (X)$ and $\mathcal{E}$ has PSU, then $\mathcal{F}$ has PSU.
\end{remark}

Let $\mathcal{A}$ be a family of subsets  of a set $Z$. For each $D \subseteq Z$ define $\mathcal A_D = \{A \in \mathcal A : D \subseteq A\}$.
A \textit{refinement} of the family  $\mathcal{A}$   is a family of subsets $\mathcal{D}$ of $Z$ such that $\mathcal{A}_D \ne \emptyset$ for each  $D \in \mathcal{D}$.

For a metric space $X$, a refinement $\mathcal{D} \subseteq \mathcal C (X)$ of $\mathcal{A} \subseteq \mathcal C(X)$ is called a \textit{strong refinement} provided that $\{D \cap B : D \in \mathcal D, B \in \mathcal C(A) $ and $ A \in \mathcal A_D\} \subseteq \mathcal C(X) \cup \{\emptyset\}$.

\begin{remark}\label{rem:sref}
Let $X$ be a  metric space. Each refinement of a family $\mathcal A$ of dendrites of $X$ is a strong refinement of $\mathcal A$.
\end{remark}

\begin{lemma}\label{lem:srUAC}
Let $X$ be a metric space. If $\mathcal{A} \subseteq \mathcal C(X)$ has PSU and $\mathcal{D}$ is a strong refinement of $\mathcal{A}$, then $\mathcal{D}$ has PSU.
\end{lemma}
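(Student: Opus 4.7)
The plan is to use the constant $k$ witnessing PSU for $\mathcal{A}$ and show that the same $k$ works for $\mathcal{D}$, by intersecting the covering pieces of $A\in\mathcal{A}$ with $D\in\mathcal{D}$.

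First, I would fix $\epsilon>0$ and let $k\in\mathbb{N}$ be the integer provided by the PSU property of $\mathcal{A}$ for this $\epsilon$. Given an arbitrary $D\in\mathcal{D}$, since $\mathcal{D}$ is a refinement of $\mathcal{A}$ I can choose some $A\in\mathcal{A}_D$; then the PSU property yields $\mathcal{B}\in[\mathcal{C}_\epsilon(X)]^{<k+1}$ with $A=\bigcup\mathcal{B}$. Since each $B\in\mathcal{B}$ is a subcontinuum of $A$, it belongs to $\mathcal{C}(A)$, so the strong refinement hypothesis guarantees that $D\cap B\in\mathcal{C}(X)\cup\{\emptyset\}$ for every $B\in\mathcal{B}$.

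Next, I would set $\mathcal{B}_D=\{D\cap B:B\in\mathcal{B}\text{ and }D\cap B\neq\emptyset\}$. Using $D\subseteq A=\bigcup\mathcal{B}$ one immediately obtains
\[D=D\cap\bigcup\mathcal{B}=\bigcup_{B\in\mathcal{B}}(D\cap B)=\bigcup\mathcal{B}_D.\]
Moreover, each nonempty $D\cap B$ is a continuum by the previous paragraph, and $\mathrm{diam}(D\cap B)\le\mathrm{diam}(B)<\epsilon$, so $\mathcal{B}_D\subseteq\mathcal{C}_\epsilon(X)$. Finally $|\mathcal{B}_D|\le|\mathcal{B}|<k+1$, and this bound depends only on $\epsilon$ (not on $D$), which is exactly what is needed to conclude that $\mathcal{D}$ has PSU.

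There is no real obstacle here; the proof is essentially a bookkeeping argument, and the whole purpose of the ``strong'' in strong refinement is precisely to ensure that the intersections $D\cap B$ remain continua so that they qualify as members of $\mathcal{C}_\epsilon(X)$. The only point requiring attention is verifying that $B\in\mathcal{C}(A)$ (so that the strong refinement clause applies), which is immediate from $B\subseteq A$ and $B\in\mathcal{C}(X)$.
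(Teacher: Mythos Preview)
Your proof is correct and essentially identical to the paper's: both fix $\epsilon$, pull back the bound $k$ from $\mathcal{A}$, choose $A\in\mathcal{A}_D$, take the PSU cover $\mathcal{B}$ of $A$, and form $\{D\cap B:B\in\mathcal{B}\}\setminus\{\emptyset\}$ as the required cover of $D$. You are slightly more explicit than the paper in justifying why the strong refinement clause applies (namely that $B\subseteq A$ forces $B\in\mathcal{C}(A)$) and in noting the diameter bound, but the argument is the same.
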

\begin{proof}
Let $\epsilon > 0$. From our assumption that $\mathcal{A}$ has PSU, there exists $k \in \mathbb N$ fulfilling that for each $A \in \mathcal{A}$ there exists $\mathcal{B} \in [\mathcal C_\epsilon(X)]^{<k+1}$ such that $A = \bigcup \mathcal{B}$.

Now, let $D \in \mathcal{D}$ be arbitrary. Since $\mathcal{D}$ is a refinement of $\mathcal{A}$,   there exists $A_D \in \mathcal{A}_D$. So, we may take $\mathcal{B} \in [\mathcal C_\epsilon(X)]^{< k+1}$ satisfying that $A_D = \bigcup \mathcal{B}$. Define $\mathcal{E} = \{ D \cap B : B \in \mathcal{B}\} \setminus \{ \emptyset\}$. Observe that $D = \bigcup \mathcal{E}$. The assumption that $\mathcal{D}$ is a strong refinement of $\mathcal{A}$ guarantees that $\mathcal{E} \in [\mathcal C_\epsilon(X)]^{< k+1}$. This proves that $\mathcal{D}$ has PSU.
\end{proof}

\begin{proposition}\label{lemma2or3}
	Let $X$ be a metric space and let $\mathcal{H}(X)$ be a nonempty subset of $\mathcal{K}(X)$ with the property that each element of $\mathcal{H}(X)$ is zero-dimensional.
	Suppose that there exists a path $f\colon [0,1]\to \mathcal{H}(X)$.  If for each $x \in f(0)$, there exists a path $g_x\colon [0,1]\to X$ such that $g_x(0)=x$ and $g_x(t)\in f(t)$ for each $t\in [0,1]$, then $\{g_x[[0,1]] : x \in f(0)\}$ has PSU.
\end{proposition}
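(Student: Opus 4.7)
The plan is to fix $\epsilon > 0$ and produce a single integer $k \in \mathbb{N}$, independent of $x \in f(0)$, such that every $g_x[[0,1]]$ can be written as a union of at most $k$ subcontinua of $X$ of diameter less than $\epsilon$. The engine of the argument is that each $f(t)$ is zero-dimensional, which lets us trap $f(t)$ inside a cellular family of small diameter; connectedness of the image of each subinterval under $g_x$ then forces it into a single cell.

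First I would use the construction behind Theorem~\ref{theo-celulares}: for each $t \in [0,1]$, since $f(t)$ is compact and zero-dimensional, I can partition $f(t)$ into finitely many pairwise disjoint clopen pieces of diameter less than $\epsilon/2$, and then thicken each piece to an open subset of $X$ of diameter less than $\epsilon$, keeping disjointness. This yields a finite cellular family $\mathcal{V}_t \in \mathfrak{C}(X)$ with $f(t) \in \langle \mathcal{V}_t \rangle$ and $\diam(V) < \epsilon$ for every $V \in \mathcal{V}_t$. By continuity of $f$ at $t$, there is an open interval $J_t \subseteq [0,1]$ containing $t$ such that $f[J_t] \subseteq \langle \mathcal{V}_t \rangle$; in particular $f(s) \subseteq \bigcup \mathcal{V}_t$ for every $s \in J_t$.

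Next I would invoke the Lebesgue number lemma for the open cover $\{J_t : t \in [0,1]\}$ of the compact metric space $[0,1]$ and choose a partition $0 = s_0 < s_1 < \cdots < s_N = 1$ fine enough that each $I_i = [s_{i-1}, s_i]$ lies inside some $J_{\tau_i}$. Then I would set $k = N$ and check that this uniform bound works: for any $x \in f(0)$ and any $i$, every $s \in I_i$ gives $g_x(s) \in f(s) \subseteq \bigcup \mathcal{V}_{\tau_i}$, so the connected set $g_x[I_i]$ lies inside the union of the pairwise disjoint open sets of $\mathcal{V}_{\tau_i}$ and is therefore contained in a single element $V \in \mathcal{V}_{\tau_i}$. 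Hence $g_x[I_i]$ is a continuum of diameter at most $\diam(V) < \epsilon$, i.e.\ $g_x[I_i] \in \mathcal{C}_\epsilon(X)$, and $g_x[[0,1]] = \bigcup_{i=1}^{N} g_x[I_i]$ is the desired decomposition into at most $k$ pieces, proving PSU.

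The only genuinely delicate step is the construction of $\mathcal{V}_t$ with the two simultaneous demands (smallness of each open set and $f(t) \in \langle \mathcal{V}_t \rangle$), which rests on the zero-dimensionality of $f(t)$ and on an argument virtually identical to the one in Theorem~\ref{theo-celulares}; once this is in hand, the conclusion follows by a Lebesgue number compactness argument and the standard fact that a connected subset of a union of pairwise disjoint open sets lies in one of them. The uniformity in $x$ is automatic because the partition $\{I_i\}$ and the families $\mathcal{V}_{\tau_i}$ depend only on $\epsilon$ and on $f$, not on $x$.
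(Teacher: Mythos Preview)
Your argument is correct and follows essentially the same route as the paper's proof: at each $t$ build a cellular family of small-diameter open sets containing $f(t)$ (the paper cites Corollary~\ref{corol:UCXbase}, which is exactly your Theorem~\ref{theo-celulares} construction), then use a Lebesgue number/compactness argument to produce a finite partition of $[0,1]$ whose pieces map under each $g_x$ into a single cell. The only cosmetic difference is that the paper applies the Lebesgue number lemma to the open cover $\{\langle \mathcal{U}_t\rangle\}$ of the compact set $f[[0,1]]$ in the hyperspace and then invokes uniform continuity of $f$, whereas you apply it directly to the cover $\{J_t\}$ of $[0,1]$; both yield the same uniform $k$.
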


\begin{proof}
	Let $\epsilon>0$. We show that there exists $k \in \mathbb N$ such that for each $x \in f(0)$ there exists $\mathcal{B}_x \in [\mathcal C_\epsilon(X)]^{<k+1}$ such that $g_x[[0,1]] = \bigcup \mathcal{B}_x$.
 	
	For each $t\in [0,1]$, let $\mathcal{U}_t \in \mathfrak{C}(X)$ be such that $f(t)\in \langle \mathcal{U}_t\rangle$ and $\mathrm{diam}(U)<\epsilon$ for each $U \in \mathcal{U}_t$ (see Corollary~\ref{corol:UCXbase}). It is clear that $f[[0,1]]\subseteq \bigcup\{ \langle \mathcal{U}_t\rangle : t\in [0,1]\}$. Thus, since $f[[0,1]]$ is compact, by \cite[Theorem~22.5]{Willard}, there exists a Lebesgue number $\delta>0$ for the open cover $\{ \langle \mathcal{U}_t\rangle : t\in [0,1]\}$.
	Since $f$ is uniformly continuous, there exists $\mathcal{J} \in [\mathcal C([0,1])]^{< \omega}$ such that $[0,1] = \bigcup \mathcal{J}$ and $\mathrm{diam}(f[J])<\delta$ for each $J \in \mathcal{J}$. Hence, for each $J \in \mathcal{J}$, there is $s_J\in [0,1]$ such that $f[J]\subseteq \langle\mathcal{U}_{s_J}\rangle$. Set $k = |\mathcal{J}|$.
	
	Now, for each $x \in f(0)$, define $\mathcal{B}_x = \{ g_x[J] : J \in \mathcal{J} \}$. Let us show that each $\mathcal{B}_x$ satisfies the required properties. Observe that $\mathcal B_x \in [\mathcal C(X)] ^{< k+1}$ and $g_x[[0,1]] = \bigcup \mathcal B_x$. To end the proof, it suffices to prove that $\bigcup \{\mathcal{B}_x : x \in f(0) \} \subseteq \mathcal C_\epsilon(X)$.
	
	Let $x \in f(0)$ be arbitrary. Take $J \in \mathcal{J}$. Then $g_x[J]\subseteq \bigcup f[J]\subseteq \bigcup \mathcal{U}_{s_J}$. Thus, since $g_x[J]$ is connected and $\mathcal{U}_{s_J} \in \mathfrak{C}(X)$, we have that $g_x[J]\subseteq U$ for some $U \in \mathcal{U}_{s_J}$. Therefore, $\mathrm{diam}(g_x[J])<\epsilon$. 
  \end{proof}

\begin{lemma}\label{lema08}
	Let $X$ be a metric space and let $\mathcal{A}\subseteq \mathcal C(X)$. If $\mathcal{A}$ does not have PSU, then there exists $\mathcal{D}\subseteq \mathcal{A}$ such that:
	\begin{enumerate}
		\item $\mathcal{D}$ does not have PSU, 
		\item if $\mathcal{C}\subseteq \mathcal{D}$ and $\mathcal{C}$ is infinite, then $\mathcal{C}$ does not have PSU and
		\item $\mathcal{D}$ is countable.
	\end{enumerate}
\end{lemma}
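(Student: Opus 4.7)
The plan is to extract a countable $\mathcal D\subseteq\mathcal A$ whose failure of PSU is ``indexed by $\mathbb N$'', with a single $\epsilon_0>0$ serving as a uniform witness for $\mathcal D$ and every one of its infinite subfamilies.

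First, I would negate the hypothesis: since $\mathcal A$ does not have PSU, there exists $\epsilon_0>0$ such that for every $n\in\mathbb N$ the bound $k=n$ fails, i.e., there is some $D_n\in\mathcal A$ admitting no representation $D_n=\bigcup\mathcal B$ with $\mathcal B\in[\mathcal C_{\epsilon_0}(X)]^{<n+1}$. Choose one such $D_n$ for each $n$ and set $\mathcal D=\{D_n:n\in\mathbb N\}$. Property (3) is then immediate, since $\mathcal D$ is the image of $\mathbb N$ under $n\mapsto D_n$; and property (1) is witnessed by $\epsilon_0$, because for any proposed bound $k$ the element $D_{k+1}\in\mathcal D$ admits no decomposition into $\leq k+1$ subcontinua of diameter $<\epsilon_0$, hence none into $\leq k$ such pieces.

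The principal step is (2). Given infinite $\mathcal C\subseteq\mathcal D$, I would consider the surjection $\phi\colon\mathbb N\to\mathcal D$ defined by $\phi(n)=D_n$. Because $\mathcal C$ is infinite and a finite subset of $\mathbb N$ cannot surject onto an infinite set, $\phi^{-1}[\mathcal C]$ must be infinite; hence for every $k\in\mathbb N$ one can pick some $n>k$ with $D_n\in\mathcal C$, and by construction this $D_n$ admits no decomposition into $\leq n$ (much less $\leq k$) subcontinua of diameter $<\epsilon_0$. Thus $\epsilon_0$ witnesses that $\mathcal C$ also fails PSU.

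The one mild obstacle is that the indexing $n\mapsto D_n$ need not be injective, so ``$\mathcal C$ is infinite'' does not literally translate into ``infinitely many indices appear''; the preimage argument above handles this cleanly, and no further difficulty arises.
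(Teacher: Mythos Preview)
Your proof is correct and follows essentially the same approach as the paper: negate PSU to obtain a single $\epsilon_0>0$, choose for each $n$ a witness $D_n\in\mathcal A$ that cannot be covered by $\leq n$ subcontinua of diameter $<\epsilon_0$, and set $\mathcal D=\{D_n:n\in\mathbb N\}$. The paper's proof is terser, dismissing the verification of (1)--(3) as ``not difficult to see''; your treatment of the non-injectivity issue in (2) via the preimage argument is a nice touch that the paper omits.
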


\begin{proof}
	Suppose that $\mathcal{A}$ does not have PSU. Then, there is $\epsilon>0$ such that, for each $k\in\mathbb N$, there exists $A_k\in\mathcal{A}$, with the property that:
	\begin{center}
		if $\mathcal{B}\in [\mathcal C(X)]^{<k+1}$ and $A_k=\bigcup\mathcal{B}$, then $B\notin \mathcal C_{\epsilon}(X)$, for some $B\in\mathcal{B}$.
	\end{center}	
	Let $\mathcal{D}=\{A_n : n\in\mathbb N\}$. It is not difficult to see that $\mathcal{D}$ satisfies \textit{1,2} and \textit{3}.
\end{proof}

A uniquely arcwise connected continuum $X$ is called \textit{uniformly arcwise connected} provided that for each $\epsilon>0$, there is a positive integer $n$ such that  any arc  in $X$ contains $n$ points that cut the arc into subarcs of diameter less than $\epsilon$ (see \cite{CharatonikUAC}). The following theorem is not difficult to prove.

\begin{theorem}\label{theoEq1}
Let $X$ be a uniquely arcwise connected continuum. The following conditions are equivalent:
\begin{enumerate}
\item $X$ is uniformly arcwise connected;
\item the family of all arcs in $X$ has PSU;
\item for each $z\in X$, the family of all arcs in $X$ with $z$ as an end point, has PSU;
\item for each $z\in X$, there exists a family  $\mathcal{A}$ of arcs having PSU, such that $z$ is an end point of each element of $\mathcal{A}$ and $X=\bigcup\mathcal{A}$.
\end{enumerate}
\end{theorem}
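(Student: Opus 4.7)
The plan is to establish the cycle $(1)\Rightarrow(2)\Rightarrow(3)\Rightarrow(4)\Rightarrow(2)\Rightarrow(1)$, where everything involving $(2)$ and $(3)$ is bookkeeping and the real content is the implication $(4)\Rightarrow(2)$.

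The equivalence $(1)\Leftrightarrow(2)$ is immediate from unwinding definitions: from a cutting of an arc $A$ by $n$ points into subarcs of diameter $<\epsilon$ one obtains a PSU decomposition of $A$ with at most $n+1$ pieces; conversely, if $A=\bigcup\mathcal{B}$ with $\mathcal{B}\in[\mathcal{C}_\epsilon(X)]^{<k+1}$, each $B\in\mathcal{B}$ is a subcontinuum of the arc $A$ (hence a subarc) of diameter $<\epsilon$, and the collection of endpoints of the $B$'s provides at most $2k$ cutting points realizing (1). The implication $(2)\Rightarrow(3)$ is Remark~\ref{lem:UACsub} applied to the subfamily of arcs containing a fixed $z$ as endpoint. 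For $(3)\Rightarrow(4)$, given $z$ the family $\mathcal{A}$ of all arcs with endpoint $z$ has PSU by (3), and $X=\bigcup\mathcal{A}$ because $X$ is arcwise connected (each $x\neq z$ lies on $[z,x]\in\mathcal{A}$, and $z$ is an endpoint of every such arc).

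The crux is $(4)\Rightarrow(2)$. Fix a point $z\in X$ and let $\mathcal{A}$ be a corresponding family from (4). I will introduce the auxiliary family $\mathcal{D}=\{[z,x]:x\in X\}$ and first show it has PSU. Indeed, $\mathcal{D}$ refines $\mathcal{A}$: for each $x\in X=\bigcup\mathcal{A}$ we can pick $A\in\mathcal{A}$ with $x\in A$, and since $z$ is an endpoint of $A$ and $X$ is uniquely arcwise connected, $[z,x]\subseteq A$. Because each $A\in\mathcal{A}$ is a dendrite, Remark~\ref{rem:sref} upgrades the refinement to a strong refinement, and Lemma~\ref{lem:srUAC} then yields PSU for $\mathcal{D}$. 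To deduce PSU for the family of all arcs, let $J$ be an arbitrary arc in $X$ with endpoints $a,b$. The key structural fact will be that $J\subseteq[z,a]\cup[z,b]$: the set $[z,a]\cap[z,b]$ is a subcontinuum of the arc $[z,a]$ containing $z$, hence has the form $[z,c]$, and by uniqueness of arcs $[a,b]=[a,c]\cup[c,b]$ with $[a,c]\subseteq[z,a]$ and $[c,b]\subseteq[z,b]$. For a given $\epsilon>0$, PSU of $\mathcal{D}$ produces $k\in\mathbb{N}$ (independent of $a,b$) with $[z,a]=\bigcup\mathcal{B}_a$ and $[z,b]=\bigcup\mathcal{B}_b$, both covers lying in $[\mathcal{C}_\epsilon(X)]^{<k+1}$; then $\mathcal{B}=\{J\cap B:B\in\mathcal{B}_a\cup\mathcal{B}_b\}\setminus\{\emptyset\}$ covers $J$, has at most $2k$ members, and each member is a subarc of $J$ (being an intersection of two subarcs inside the ambient arc $[z,a]$ or $[z,b]$) of diameter less than $\epsilon$.

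The main obstacle I expect is the structural identity $[a,b]\subseteq[z,a]\cup[z,b]$, which rests on the hereditary unicoherence implicit in being a uniquely arcwise connected continuum (so that $[z,a]\cap[z,b]$ is connected and hence a subarc of $[z,a]$); once this is invoked, all that remains is the refinement/PSU bookkeeping just described.
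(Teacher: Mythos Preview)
The paper does not prove this theorem at all; it merely remarks that it ``is not difficult to prove.'' Your argument is correct and fills this gap cleanly, with one caveat in the justification.

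You invoke ``the hereditary unicoherence implicit in being a uniquely arcwise connected continuum'' to obtain that $[z,a]\cap[z,b]$ is connected. This implication is false: the Warsaw circle is uniquely arcwise connected but not unicoherent. Fortunately the specific fact you need does follow directly from unique arcwise connectedness. Parametrize $[z,a]$ by a homeomorphism $h\colon[0,1]\to[z,a]$ with $h(0)=z$, set $t_0=\sup\{t:h(t)\in[z,b]\}$ and $c=h(t_0)\in[z,b]$. Then $h([0,t_0])$ and the subarc of $[z,b]$ from $z$ to $c$ are both arcs from $z$ to $c$, hence both equal $[z,c]$; this gives $[z,c]\subseteq[z,a]\cap[z,b]$. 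Conversely, any $p=h(s)$ in the intersection satisfies $s\le t_0$, so $p\in h([0,t_0])=[z,c]$. With $[z,a]\cap[z,b]=[z,c]$ in hand, your computation $J\cap[z,a]=[a,c]$ (and symmetrically for $b$) goes through, and the rest of the $(4)\Rightarrow(2)$ step is exactly as you wrote.

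An alternative that bypasses this computation: the inclusion $J\subseteq[z,a]\cup[z,b]$ already follows because $[z,a]\cup[z,b]$ is arcwise connected and contains $a,b$, so by uniqueness it contains $[a,b]=J$. Moreover $[z,a]\cup[z,b]$ is a tree (unique arcwise connectedness forbids a simple closed curve in it), hence a dendrite; thus Remark~\ref{rem:sref} and Lemma~\ref{lem:srUAC} apply once more, showing that the family of all arcs is a strong refinement of $\{[z,a]\cup[z,b]:a,b\in X\}$, and the latter has PSU with bound $2k$ by combining $\mathcal B_a$ and $\mathcal B_b$.
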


In our next result we will use Notation~\ref{unique-arcs}.

\begin{theorem}  \label{theoit875i}
	Let $X$ be a uniquely arcwise connected continuum. If $X$ is not uniformly
	arcwise connected, then $\mathcal{S}_c(X)$ has exactly $\mathfrak c$ arc components.
\end{theorem}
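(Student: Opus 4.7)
The plan is to bound the number of arc components of $\mathcal{S}_c(X)$ both from above and from below by $\mathfrak c$.

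For the upper bound, since $X$ is a metric continuum we have $|X|\leq\mathfrak c$, so by Remark~\ref{rem:c}, $\mathcal{S}_c(X)$ has at most $\mathfrak c$ path components; by Theorem~\ref{lema:path-con-implica-arco-con} these coincide with arc components. It therefore suffices, for the lower bound, to exhibit $\mathfrak c$ sequences in $\mathcal{S}_c(X)$ pairwise lying in distinct path components.

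Since $X$ is not uniformly arcwise connected, Theorem~\ref{theoEq1} tells us that condition (3) fails, so there exists $p\in X$ such that the family of arcs in $X$ with $p$ as an endpoint does not have PSU. Applying Lemma~\ref{lema08} to this family, I would extract a countable subfamily $\{A_n:n\in\mathbb N\}$ of arcs ending at $p$ such that no infinite subfamily has PSU; write $A_n=[p,q_n]$ using Notation~\ref{unique-arcs}. I would then choose along each $A_n$ suitable points converging to $p$, and, using the existence of an almost disjoint family $\{I_\alpha:\alpha\in\mathfrak c\}$ of infinite subsets of $\mathbb N$, construct for each $\alpha\in\mathfrak c$ a nontrivial convergent sequence $S_\alpha\in\mathcal{S}_c(X)$ with limit $p$, whose non-$p$ elements are drawn in a controlled way from the arcs $\{A_n:n\in I_\alpha\}$.

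Assume for contradiction that there exist $\alpha\neq\beta$ with $S_\alpha,S_\beta$ in the same path component, connected by a path $\gamma:[0,1]\to\mathcal{S}_c(X)$. By Theorem~\ref{teor:trayec-gamma-enX}, for every $x\in S_\alpha$ there is a path $g_x:[0,1]\to X$ such that $g_x(0)=x$ and $g_x(t)\in\gamma(t)$ for each $t\in[0,1]$; set $\phi(x)=g_x(1)\in S_\beta$. By Proposition~\ref{lemma2or3}, the family $\mathcal A=\{g_x[[0,1]]:x\in S_\alpha\}$ has PSU. Since a uniquely arcwise connected continuum is hereditarily unicoherent (so every subcontinuum containing two points contains the unique arc between them, and intersections of subcontinua are subcontinua or empty), the family $\mathcal D=\{[x,\phi(x)]:x\in S_\alpha\}$ is well-defined inside the $g_x[[0,1]]$'s and is a strong refinement of $\mathcal A$; by Lemma~\ref{lem:srUAC}, $\mathcal D$ also has PSU.

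The main obstacle, and technical heart of the proof, is to construct the $S_\alpha$'s so that, regardless of the matching $\phi$ arising from any hypothetical path $\gamma$, the arcs $[x,\phi(x)]$ force an infinite subfamily of $\{A_n:n\in\mathbb N\}$ to have PSU (perhaps via another strong refinement), contradicting Lemma~\ref{lema08}. The almost-disjoint indexing should be chosen precisely so that, for infinitely many $n\in I_\alpha\setminus I_\beta$, the selected point on $A_n$ cannot be matched to any element of $S_\beta$ without the arc $[x,\phi(x)]$ traversing (a substantial portion of) $A_n$; this uses the tree-like geometry of the dendroid $X$ rooted at $p$, namely that arcs between points sitting on distinct ``branches'' must pass through their common ancestor segment. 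Verifying that this geometric forcing holds uniformly enough to extract an infinite PSU subfamily from $\mathcal D$ is the delicate step of the argument.
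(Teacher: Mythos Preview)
Your overall architecture is right: the upper bound via Remark~\ref{rem:c}, and the lower bound via Theorem~\ref{teor:trayec-gamma-enX}, Proposition~\ref{lemma2or3}, Remark~\ref{rem:sref} and Lemma~\ref{lem:srUAC} are exactly the tools the paper uses. But the construction you sketch for the sequences $S_\alpha$ has a genuine gap, and not just the one you flag as ``delicate''.

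You propose to pick points on each $A_n=[p,q_n]$ converging to the common root $p$, and assemble the $S_\alpha$'s from these. This need not work: if you force $y_n\to p$ with $y_n\in A_n$, the arcs $[p,y_n]$ (and hence the arcs $[y_n,y_m]$, which in a fan equal $[y_n,p]\cup[p,y_m]$) can have diameter tending to $0$, so the resulting families may well have PSU even though $\{A_n\}$ does not. In other words, insisting on convergence to the root throws away precisely the ``length'' that makes the $A_n$ bad. The non-PSU property of $\{A_n\}$ gives you no control over short initial segments.

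The paper avoids this by not converging to the root at all. It first uses compactness of $X$ to extract a sequence $x_i\to x_\omega$ (with $x_\omega$ typically far from the root $z$) such that $\{[x_i,z]\}$ fails PSU; then Claims~1--3 transfer this to $\{[x_i,x]\}$ for \emph{every} $x\in X$. The quantitative heart is Claim~4: an inductive thinning produces a single sequence $S=\{x_{n_k}\}\cup\{x_\omega\}$ such that for every $N$ and every $i<N$, the arc $[x_{n_i},x_{n_N}]$ cannot be covered by $N$ continua of diameter $<\epsilon$. The $\mathfrak c$ many sequences are then simply elements of $\mathcal S_c(S)$ with pairwise infinite symmetric difference (no almost-disjoint family is needed). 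Given a hypothetical path from $L$ to $T$, the PSU bound $n_0$ on $\{[x,g_x(1)]:x\in L\}$ forces some $x_{n_k}\in L\setminus T$ with $k>n_0$ to be sent to some $x_{n_l}\in T$, and the arc $[x_{n_k},x_{n_l}]$ then directly contradicts Claim~4. This replaces your unresolved ``geometric forcing'' step with an explicit quantitative statement built into the sequence from the start.
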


\begin{proof}
	Let $X$ be a uniquely arcwise connected continuum which is not uniformly arcwise connected. As a consequence of Lemma~\ref{lema08}, Theorem~\ref{theoEq1}, the compactness of $X$ and Remark~\ref{lem:UACsub} there exist  $z\in X$ and $\{x_i : i\in\omega+1\}\in \mathcal{S}_c(X)$  such that $\mathcal{D}=\{[x_i,z] : i\in\omega+1\}$ does not have PSU.
	
\noindent  \textbf{Claim 1}. 		For each $x\in X$, the set $\mathcal{D}_x=\{[x_i,z]\cup [z,x] : i\in\omega+1\}$ does not have PSU.

\vspace{.1cm}

	Since $\mathcal{D}$ is a strong refinement of $\mathcal{D}_x$ for each $x\in X$, the claim follows from Lemma~\ref{lem:srUAC}.

\vspace{.3cm}

\noindent  \textbf{Claim 2}.  For each $x\in X$, the set $\mathcal{E}_x=\{[x_i,x] : i\in\omega+1\}$ does not have PSU.

\vspace{.1cm}
	
	Suppose that $\mathcal{E}_x$ has PSU for some $x \in X$. Let $\epsilon>0$. Let $\mathcal{A}\in [\mathcal C_{\epsilon}(X)]^{<\omega}$ such that $[z,x]=\bigcup\mathcal{A}$. Since $\mathcal{E}_x$ has PSU, there exists $K\in\mathbb N$, such that for each $i\in\omega+1$, there is $\mathcal{B}_i\in [\mathcal C_{\epsilon}(X)]^{<K+1}$, with $[x_i,x]=\bigcup\mathcal{B}_i$. Let $l=|\mathcal{A}|$. Thus, if $\mathcal{C}_i=\mathcal{A}\cup\mathcal{B}_i$, then $\mathcal{C}_i\in [\mathcal C_{\epsilon}(X)]^{<K+l+1}$ and $[x_i,x]\cup [z,x]=\bigcup\mathcal{C}_i$. Note that $[x_i,x]\cup [z,x]=[x_i,z]\cup [z,x]$, for each $i\in\omega+1$. Therefore, $\mathcal{D}_x$ has PSU, contradicting Claim~1.
	
\vspace{.3cm}


\noindent  \textbf{Claim 3}.  		Let $x,y\in X$ and let $\epsilon>0$. If 
		   $$\exists n\in\mathbb N, \forall i\in\omega+1, \exists \mathcal{B}_i\in [\mathcal C_{\epsilon}(X)]^{<n+1} ([x_i,y]=\bigcup\mathcal{B}_i),$$ 
		then
		    $$\exists m\in\mathbb N, \forall i\in\omega+1, \exists \mathcal{L}_i\in [\mathcal C_{\epsilon}(X)]^{<m+1} ([x_i,x]=\bigcup\mathcal{L}_i).$$

\vspace{.1cm}
		
	Suppose that there exists $n\in\mathbb N$ such that for each $i\in\omega+1$, there is $\mathcal{B}_i\in [\mathcal C_{\epsilon}(X)]^{<n+1}$, with $[x_i,y]=\bigcup\mathcal{B}_i$. Let $\mathcal{A}\in [\mathcal C_{\epsilon}(X)]^{<\omega}$ such that $[x,y]=\bigcup\mathcal{A}$. Let $l=|\mathcal{A}|$. 
	For each $i \in \omega +1$ define
	  $$\mathcal{L}_i=\{[x_i,x]\cap D : D\in \mathcal{A}\cup\mathcal{B}_i\}\setminus \{\emptyset\}.$$ 
	It is clear that $\mathcal{L}_i\in [\mathcal C_{\epsilon}(X)]^{<l+n+1}$ and $[x_i,x]=\bigcup\mathcal{L}_i$. The proof of  Claim~3  is completed.

	
	\vspace{0.5cm}
	
	Since $\mathcal{D}$ does not have PSU, we may take $\epsilon>0$ such that, for each $n\in\mathbb N$, there is $i\in\omega+1$, satisfing that: 
	\begin{center}
		if $\mathcal{B}\in [\mathcal C(X)]^{<n+1}$ and $[x_i,z]=\bigcup\mathcal{B}$, then $B\notin \mathcal C_{\epsilon}(X)$, for some $B\in\mathcal{B}$.
	\end{center}
	
	For each $i\in\mathbb N$, let 
	    $$\mathcal{C}_i=\{[x_i,x_j] : j\in \omega\setminus (i+1)\}.$$ 
	Notice that, by Claim~2  and Lemma~\ref{lema08}, we may suppose that $\mathcal{C}_i$ does not have PSU, and by Claim~3, for each $n\in\mathbb N$, there exists $j_0\in \omega\setminus (i+1)$, such that:
	\begin{center}
		if $\mathcal{B}\in [\mathcal C(X)]^{<n+1}$ and $[x_i,x_{j_0}]=\bigcup\mathcal{B}$, then $B\notin \mathcal  C_{\epsilon}(X)$, for some $B\in\mathcal{B}$. $(\dag)$
	\end{center}

	
\noindent  \textbf{Claim 4}. 		There exists $(x_{n_k})_k\subseteq (x_n)_n$ such that for each $N\in\mathbb N$:
		\begin{center}
			if $i<N$, $\mathcal{B}\in [\mathcal C(X)]^{<N+1}$ and $[x_{n_i},x_{n_N}]=\bigcup\mathcal{B}$, then $B\notin \mathcal C_{\epsilon}(X)$, for some $B\in\mathcal{B}$.
		\end{center}

\vspace{.1cm}

	Let $n_1=1$, by $(\dag)$, there exists $n_2>1$ such that $[x_{n_1},x_{n_2}]\notin \mathcal C_{\epsilon}(X)$. Furthermore, by Lemma~\ref{lema08}, we may assume that there exists $l>n_2$ such that:
	\begin{center}
		if $i\geq l$, $\mathcal{B}\in [\mathcal C(X)]^{<3}$ and $[x_{n_1},x_i]=\bigcup\mathcal{B}$, then $B\notin \mathcal C_{\epsilon}(X)$, for some $B\in\mathcal{B}$.
	\end{center}
	Thus, by $(\dag)$, there exists $n_3>n_2$ such that, for each $i\in\{1,2\}$:
	\begin{center}
		if $\mathcal{B}\in [\mathcal C(X)]^{<3}$ and $[x_{n_i},x_{n_3}]=\bigcup\mathcal{B}$, then $B\notin \mathcal C_{\epsilon}(X)$, for some $B\in\mathcal{B}$.
	\end{center}	
	In this way, inductively, we construct a sequence $(x_{n_k})_k$ satisfying our requirements.
	
	\vspace{0.3cm}
	
	Finally, let $S=\{x_{n_k} : k\in\mathbb N\}\cup\{x_{\omega}\}$ and fix $L,T\in\mathcal{S}_c(S)$ such that $(L\setminus T) \cup (T\setminus L) $ is infinite. We prove that there is no  path joining $L$ and $T$ in $\mathcal{S}_c(X)$. Suppose that $L\setminus T$ is infinite and that there is a path $f\colon [0,1]\to \mathcal{S}_c(X)$ such that $f(0)=L$ and $f(1)=T$. By Theorem~\ref{teor:trayec-gamma-enX}, for each $x\in L$, there is a path $g_x\colon [0,1] \to X$ such that $g_x(0)=x$ and $g_x(t)\in f(t)$, for each $t\in [0,1]$. Furthermore, $\mathcal{P}=\{g_x[[0,1]] : x\in L\}$ has PSU, by Proposition~\ref{lemma2or3}. Let $\mathcal{R}=\{[x,g_x(1)] : x\in L\}$. Remark~\ref{rem:sref} implies that $\mathcal{R}$ is a strong refinement of $\mathcal{P}$. Thus, by Lemma~\ref{lem:srUAC}, the set $\mathcal{R}$ has PSU. Hence, there exists $n_0\in\mathbb N$ such that for each $x\in L$, there is $\mathcal{B}\in [\mathcal C_{\epsilon}(X)]^{<n_0+1}$, with $[x,g_x(1)]=\bigcup\mathcal{B}$.

Suppose that there exists an infinite set $M\subseteq L\setminus T$ such that $g_x(1)=x_{\omega}$ for each $x\in M$. Since $M\subseteq S$, $\{[x,x_1] : x\in M\}$ does not have PSU. Applying the same argument as in Claim 2, we have that $\{[x,x_{\omega}] : x\in M\}$ does not have PSU; but this contradicts the fact that $\{[x,x_{\omega}] : x\in M\}\subseteq\mathcal{R}$ and $\mathcal{R}$ has PSU (see Remark 3.11). Therefore, there exists $x_{n_k}\in L\setminus T$ such that $k>n_0$ and $g_{x_{n_k}}(1)=x_{n_l}\in T\setminus \{x_{\omega}\}$. Then	
	there is $\mathcal{B}\in [\mathcal C_{\epsilon}(X)]^{<n_0+1}\subseteq [\mathcal C_{\epsilon}(X)]^{<k+1}$ such that $[x_{n_k},x_{n_l}]=\bigcup\mathcal{B}$. But this contradicts Claim~4. Thus, there is no  path in $\mathcal S_c(X)$ joining $L$ and $T$. It is well known that the family $\mathcal{L}\subseteq [\mathbb N]^{\omega}$ such that $(L_1\setminus L_2) \cup (L_2\setminus L_1)$ is infinite, for each $L_1, L_2\in\mathcal{L}$, has cardinality $\mathfrak c$. Therefore, using Remark~\ref{rem:c}, we conclude that $\mathcal{S}_c(X)$ has exactly $\mathfrak c$ arc components.
\end{proof}

Our last result implies that  if $X$ is the Warsaw circle  or $X$ is any fan defined in section 6 of \cite{CharatonikFans}, then $\mathcal{S}_c(X)$ is not arcwise connected. In particular, Theorem~\ref{theoit875i} generalizes \cite[Example~2.8]{Garcia2015} and \cite[Example~3.7]{GarciaC}. 

  
\begin{corollary}  \label{corolit875iii}
	Let $X$ be a uniquely arcwise connected continuum. If $\mathcal{S}_c(X)$ is  arcwise connected, then   
	$X$ is  uniformly 	arcwise connected.
\end{corollary}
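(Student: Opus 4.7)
The plan is simply to take the contrapositive and invoke Theorem~\ref{theoit875i} directly. Suppose $X$ is a uniquely arcwise connected continuum that is not uniformly arcwise connected. Then by Theorem~\ref{theoit875i}, $\mathcal{S}_c(X)$ has exactly $\mathfrak c$ arc components. In particular, $\mathcal{S}_c(X)$ has more than one arc component, so it fails to be arcwise connected.

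There is essentially nothing extra to do: all the substantive work (the construction of the subsequence $(x_{n_k})_k$ using Claims~1--4, the application of Proposition~\ref{lemma2or3} to lift an alleged path in $\mathcal{S}_c(X)$ to paths in $X$, and the use of Remark~\ref{rem:c} together with an almost disjoint family of size $\mathfrak c$ in $[\mathbb N]^\omega$) has already been carried out in the proof of Theorem~\ref{theoit875i}. Thus the corollary is an immediate logical consequence, and the only ``step'' is to rephrase the implication in its contrapositive form. There is no real obstacle, since we do not even need the strong conclusion that there are exactly $\mathfrak c$ arc components; the weaker statement that $\mathcal{S}_c(X)$ has at least two arc components already suffices to contradict the hypothesis that $\mathcal{S}_c(X)$ is arcwise connected.

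If one wanted a slightly cleaner presentation, one could also observe that, since $X$ is metrizable, $\mathcal{S}_c(X)$ is Hausdorff, so by Theorem~\ref{lema:path-con-implica-arco-con} one could equivalently phrase the corollary with ``pathwise connected'' in place of ``arcwise connected''; but this is not needed for the present argument.
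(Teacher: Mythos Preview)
Your proposal is correct and matches the paper's approach: the corollary is stated immediately after Theorem~\ref{theoit875i} with no proof, indicating it is meant as a direct contrapositive consequence exactly as you describe.
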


We note that the converse of Theorem~\ref{theoit875i} (or Corollary~\ref{corolit875iii}) is not true (see  \cite[Example~3.6]{GarciaC}).


\vspace{.3cm}

Next, we present another necessary condition for the path connectedness of $\mathcal{S}_c(X)$. To this end we introduce some notions.

Let $(C_n)_{n \in \mathbb{N}}$ be a sequence of nonempty subsets of a continuum $X$. The \textsl{limit superior} of the sequence, denoted by $\limsup C_n$, is the set of all points $x \in X$ such that there exist a sequence $(x_k)_{k \in \mathbb{N}}$ in $X$ converging to $x$ and a strictly increasing sequence $(n_k)_{k \in \mathbb{N}}$ in $\mathbb{N}$ satisfying that $x_k \in C_{n_k}$ for every $k \in \mathbb{N}$. The \textsl{limit inferior} of the sequence, denoted by $\liminf C_n$, is defined as the set of all points $x \in X$ such that there exists a sequence $(x_n)_{n \in \mathbb{N}}$ in $X$ converging to $x$ fulfilling that $x_n \in C_n$ for every $n \in \mathbb{N}$. If $\liminf C_n = \limsup C_n$, then the \textsl{limit} of the sequence is $\lim C_n = \liminf C_n$, and we say that the sequence $(C_n)_{n \in \mathbb{N}}$ converges to $\lim C_n$.

A proper, nonempty, closed subset $Y$ of a continuum $X$ is called:
\begin{itemize}
\item an $R^1$\textsl{-set} if there exist $U \in \tau_X$ containing $Y$ and sequences $(C_n^1)_{n \in \mathbb{N}}$ and $(C_n^2)_{n \in \mathbb{N}}$ of components of $U$ such that $Y = (\limsup C_n^1) \cap (\limsup C_n^2)$.
\item an $R^2$\textsl{-set} if there exist $U \in \tau_X$ containing $Y$ and sequences $(C_n^1)_{n \in \mathbb{N}}$ and $(C_n^2)_{n \in \mathbb{N}}$ of components of $U$ such that $Y = (\lim C_n^1) \cap (\lim C_n^2)$.
\item an $R^3$\textsl{-set} if there exist $U \in \tau_X$ containing $Y$ and a sequence $(C_n)_{n \in \mathbb{N}}$ of components of $U$ such that $Y = \liminf C_n$.
\end{itemize}

It is well known that an $R^i$-set contained in a continuum $X$ is an obstruction for the contractibility of its hyperspaces $\mathcal{K}(X)$ and $\mathcal{C}(X)$ \cite[Corollary~3.8]{Baik1997}. 
Theorem~\ref{theo:Ri} below implies that this also happens for the hyperspace $\mathcal{S}_c(X)$.

\begin{theorem} \label{theo-R3}
Let $X$ be a continuum. If $\mathcal{S}_c(X)$ is pathwise connected, then $X$ does not contain $R^3$-sets.
\end{theorem}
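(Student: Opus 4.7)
The strategy is proof by contradiction. Suppose $X$ contains an $R^3$-set $Y$, witnessed by an open set $U \supseteq Y$ and a sequence $(C_n)_{n \in \mathbb{N}}$ of components of $U$ with $Y = \liminf C_n$. Since $X$ is a nondegenerate continuum (otherwise $\mathcal{S}_c(X)$ is empty and the statement is vacuous) and $Y$ is proper, connectedness of $X$ rules out $U = X$ (in that case the only component of $U$ would be $X$, forcing $Y = X$), so $X \setminus U$ is a nonempty closed subset of $X$.

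My first task is to construct two witnesses in $\mathcal{S}_c(X)$: a test element $S_1$ exhibiting the $R^3$-structure and a target $S_2$ separated from $U$. Fix $y \in Y$ and, by the definition of $\liminf$, choose $y_n \in C_n$ with $y_n \to y$; after passing to a subsequence the $y_n$ may be taken pairwise distinct, so $S_1 := \{y_n : n \in \mathbb{N}\} \cup \{y\} \in \mathcal{S}_c(X)$. Next, I would choose $S_2 \in \mathcal{S}_c(X)$ whose limit point $z$ lies in $X \setminus \overline{U}$ (or in $X \setminus U$ in the edge case $\overline{U} = X$), so that all but finitely many points of $S_2$ avoid $U$; such $S_2$ exists because a nondegenerate continuum has no isolated points and $X \setminus U$ is nonempty. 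By the hypothesis there is a path $\alpha \colon [0,1] \to \mathcal{S}_c(X)$ with $\alpha(0) = S_1$ and $\alpha(1) = S_2$, and Theorem~\ref{teor:trayec-gamma-enX} lifts each $y_n$ to a continuous $\beta_n \colon [0,1] \to X$ with $\beta_n(0) = y_n$ and $\beta_n(t) \in \alpha(t)$. Since $\beta_n(0) \in C_n \subseteq U$ while $\beta_n(1) \notin U$ for all sufficiently large $n$, the number $t_n := \inf\{t : \beta_n(t) \notin U\}$ is well defined, and a standard connectedness argument (using that $U$ is open and its components are closed in $U$) yields $\beta_n([0,t_n)) \subseteq C_n$ and $\beta_n(t_n) \in \partial U \cap \overline{C_n}$.

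The final step---and the main obstacle---is to convert Proposition~\ref{lemma2or3} (which gives that $\{\beta_n[[0,1]] : n \in \mathbb{N}\}$ has property S uniformly) together with the $\liminf$-hypothesis into an explicit contradiction. My plan is to pass to a subsequence so that $\beta_{n_k}(t_{n_k}) \to w \in \partial U$ and the arcs $\beta_{n_k}([0,t_{n_k}]) \subseteq \overline{C_{n_k}}$ Hausdorff-converge to a continuum $K$ joining $y$ and $w$, with $K \subseteq \limsup \overline{C_{n_k}}$. The components $(C_n)$ are pairwise disjoint inside $U$ (any connected subset of $U$ meeting two different $C_n$'s is impossible), and the $\liminf$-identity forces the sequence to ``cluster'' only onto $Y$; the aim is to show that a uniform PSU-decomposition of the arcs cannot coexist with such a $K$, since any piece of a decomposition lying in $U$ must lie in a single $C_n$, yet the chain of pieces connecting $y \in Y$ to $w \in \partial U$ along $K$ would have to traverse infinitely many different components and still be uniformly covered by a bounded number of small continua. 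I expect the delicate step to be making this interplay precise---likely by choosing $\epsilon$ smaller than $d(y,\partial U)$ and analyzing how a PSU-chain of pieces inside $U$ must change $C_n$'s while remaining uniformly bounded in cardinality---and this final conversion of PSU plus the $R^3$-structure into a concrete contradiction is the technical crux of the argument.
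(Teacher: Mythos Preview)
Your setup is sound through the construction of $S_1$, the path $\alpha$, and the lifted paths $\beta_n$ via Theorem~\ref{teor:trayec-gamma-enX}. However, the route through Proposition~\ref{lemma2or3} and the PSU machinery is the wrong tool here, and the gap you yourself flag in the last paragraph is genuine. Each $\beta_n\big[[0,t_n]\big]$ is a single connected set lying in $\overline{C_n}$; the PSU conclusion merely says it is a union of at most $k$ continua of small diameter, which in no way conflicts with the $R^3$-structure. The Hausdorff limit $K$ is one continuum, not something that ``traverses infinitely many components'', and the PSU bound applies to the individual $\beta_n$'s rather than to $K$. Nothing in your outline ever exploits the hypothesis $Y=\liminf C_n$ in a decisive way, and without that the argument cannot close. (There is also a minor defect: in the edge case $\overline{U}=X$ you cannot arrange that all but finitely many points of $S_2$ avoid $U$, and even when you can, nothing forces $\beta_n(1)$ to land among those points.)

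The paper's argument avoids PSU entirely and instead runs a supremum argument along the path. One takes $Q\in\mathcal{S}_c(X)$ with $Q\subseteq X\setminus Y$ (this always exists since $Y$ is proper and closed), defines
\[
T=\{\,t\in[0,1]:\exists k\in\mathbb{N}\ \forall n\ge k\ (\alpha(t)\cap C_n\neq\emptyset)\,\},
\]
and sets $s=\sup T$. The key observation, and the place where $Y=\liminf C_n$ is actually used, is that $t\in T$ forces $\lim\alpha(t)\in Y$: the points of $\alpha(t)$ lying in $C_n$ (for $n$ large) form a sequence converging to $\lim\alpha(t)$, which therefore belongs to $\liminf C_n=Y$. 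From this one shows $0<s<1$, picks a cellular Vietoris neighborhood of $\alpha(s)$ whose member containing $\lim\alpha(s)$ sits inside $U$, chooses nearby $s_1\in T$ and $s_2\notin T$, and then a \emph{single} lift (via Theorem~\ref{teor:trayec-gamma-enX}) of a point of $\alpha(s_1)\cap C_n$ over $[s_1,s_2]$ stays in that member, hence in $U$, hence in $C_n$, contradicting $s_2\notin T$. This is the missing idea: track, along $t$, the property ``$\alpha(t)$ meets cofinitely many $C_n$'' and locate the first failure.
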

\begin{proof}
Seeking a contradiction suppose that there exists an $R^3$-set $Y$ contained in $X$. Let $U \in \tau_X$ and let $(C_n)_{n \in \mathbb{N}}$ be a sequence of components of $U$ such that $Y \subseteq U$ and $\liminf C_n = Y$. Fix $p \in Y$ and let $(p_n)_{n \in \mathbb{N}}$ be a sequence in $X$ such that $\lim p_n = p$ and $p_n \in C_n$ for every $n \in \mathbb{N}$. Set $S = \{p\} \cup \{p_n : n \in \mathbb{N} \} $. Notice that $S \in \mathcal{S}_c(X)$. 

Now, let $Q \in \mathcal{S}_c(X)$ be such that $Q \subseteq X \setminus Y$. Our assumption guarantees that there exists a path $\alpha : [0,1] \to \mathcal{S}_c(X)$ satisfying that $\alpha(0) = S$ and $\alpha(1) = Q$.  Define $T = \{ t \in [0,1] : \exists k \in \mathbb{N} \ (\alpha(t) \cap C_n \neq \emptyset \ \text{for all} \ n \geq k) \}$ and set $s = \sup T$.  \medskip

\noindent \textbf{Claim 1.} $\{ \lim \alpha(t) : t \in  T\} \subseteq Y$. \medskip

Let $t \in T$ be arbitrary. Then there exists $k \in \mathbb{N}$ such that $\alpha(t) \cap C_n \neq \emptyset$ for all $n \geq k$. Let $(x_n)_{n \in \mathbb{N}}$ be a sequence in $X$ such that $x_n \in \alpha(t) \cap C_n$ if $n \geq k$ and $x_n \in C_{n}$ if $n<k$. Thus, $\lim x_n = \lim \alpha(t) $. So, the last equality and the inclusion $\lim x_n \in \liminf C_n$ imply that $\lim \alpha(t) \in  Y$. This ends the proof of the claim. \medskip

\noindent \textbf{Claim 2.} $0 < s < 1$. \medskip

First, recall that $\alpha(0) = S \subseteq U$. From the continuity of $\alpha$, it follows the existence of $t_1 \in (0,1]$ satisfying that $\alpha[[0,t_1]] \subseteq \langle \{U\} \rangle$. We shall prove that $[0,t_1] \subseteq T$. 

Let $t \in [0,t_1]$ be arbitrary and let $n \in \mathbb{N}$ be arbitrary. Since $p_n \in \alpha(0)$, by Theorem~\ref{teor:trayec-gamma-enX}, there exists a mapping $\lambda _n : [0,t] \to X$ satisfying that $\lambda _n(0) = p_n$ and $\lambda _n(r) \in \alpha(r)$ for all $r \in [0,t]$. Observe that $\lambda _n[[0,t]]$ is a connected subspace of $\bigcup \alpha[[0,t]] \subseteq U$ and $p_n \in \lambda _n[[0,t]] \cap C_n$. Hence, $\lambda _n[[0,t]] \subseteq C_n$. Then $\lambda _n (t) \in C_n \cap \alpha _n(t)$. This implies that $t \in T$. We conclude that $0 < t_1 \leq \sup T$.

Second, the continuity of $\alpha$ and the fact that $\alpha(1) = Q \subseteq X \setminus Y$ ensures  the existence of $t_2 \in [0,1)$ fulfilling that $\alpha[[t_2,1]] \subseteq \langle \{X \setminus Y\} \rangle$. Let us argue that $[t_2,1] \cap T = \emptyset$. If $t$ were an element of $[t_2,1] \cap T$, the choice of $t_2$ would imply that $\alpha(t) \subseteq X \setminus Y$ and Claim~1 would guarantee that $\lim \alpha(t) \in Y$, a contradiction. Therefore, $[t_2,1] \cap T = \emptyset$ and we infer that $\sup T \leq t_2< 1$. Our claim is proved. \medskip

Finally, let $\mathcal{V} \in \mathfrak{C}(X)$ be such that: (i) $\alpha(s) \in \langle \mathcal{V} \rangle_c$ (see Proposition~\ref{pro:UCXbase}) and (ii) if $V \in \mathcal V$ is such that $V \cap Y \ne \emptyset$, then $V \subseteq U$.
From the fact that $\alpha$ is continuous, the choice of $s$ and Claim~2, it follows that there exist $s_1,s_2 \in [0,1]$ such that $s \in (s_1,s_2)$, $s_1 \in T$, $s_2 \notin T$ and $\alpha[[s_1,s_2]] \subseteq \langle \mathcal{V} \rangle$. Since $s_2 \notin T$, there exists an infinite subset $J$ of $\mathbb{N}$ such that   $\alpha(s_2) \cap C_n = \emptyset$ for each $n \in J$. On the other  hand, Claim~1 and the choice of $s_1$ guarantee that $\lim \alpha(s_1) \in Y$. Let $V \in \mathcal{V}$ be such that $\lim \alpha(s_1) \in V$. Then, by the choice of $\mathcal{V}$ we have that $V \subseteq U$. Claim~1 and the inclusion $s_1 \in T$ ensure the existence of $n \in J$ such that $\alpha(s_1) \cap C_n \cap V \neq \emptyset$. Fix $y \in \alpha(s_1) \cap C_n \cap V$. Now, by Theorem~\ref{teor:trayec-gamma-enX}, there exists a mapping $\rho : [s_1,s_2] \to X$ such that $\rho(s_1) = y$ and $\rho(r) \in \alpha(r)$ for each $r \in [s_1,s_2]$. Then $\rho[[s_1,s_2]]$ is a connected subspace of $\bigcup \alpha[[s_1,s_2]] \subseteq \bigcup \mathcal{V}$ and hence $\rho[[s_1,s_2]] \subseteq V \subseteq U$. This implies that $\rho[[s_1,s_2]] \subseteq C_n$, a contradiction. 
\end{proof}

The proof of our next result follows from the previous theorem, the fact that each $R^2$-set is an $R^3$-set \cite[Theorem~2.3, p.~310]{Baik1997} and the fact that each $R^1$-set contains an $R^3$-set \cite[Corollary~2.5, p.~310]{Baik1997}.

\begin{theorem}\label{theo:Ri}
Let $X$ be a continuum. If $\mathcal{S}_c(X)$ is pathwise connected, then $X$ does not contain $R^i$-sets for each $i \in \{1,2,3\}$.
\end{theorem}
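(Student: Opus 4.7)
The plan is to reduce the three cases to the single case $i=3$, which has already been established as Theorem~\ref{theo-R3}. The remark preceding the statement makes this explicit: we will invoke two classical facts from Baik's paper, namely that every $R^2$-set in a continuum is automatically an $R^3$-set \cite[Theorem~2.3]{Baik1997}, and that every $R^1$-set contains an $R^3$-set \cite[Corollary~2.5]{Baik1997}.

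First I would argue by contradiction. Assume $\mathcal{S}_c(X)$ is pathwise connected and suppose, toward a contradiction, that for some $i\in\{1,2,3\}$ there exists an $R^i$-set $Y\subseteq X$. If $i=3$, then the existence of $Y$ directly contradicts Theorem~\ref{theo-R3}. If $i=2$, then by \cite[Theorem~2.3, p.~310]{Baik1997}, $Y$ is in particular an $R^3$-set, because $\lim C_n = \liminf C_n$ whenever the limit exists, so the witnessing pair of convergent sequences of components $(C^1_n)_{n\in\mathbb N}$ and $(C^2_n)_{n\in\mathbb N}$ can be rearranged into a single sequence whose limit inferior is precisely $Y$; again Theorem~\ref{theo-R3} is violated. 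If $i=1$, then by \cite[Corollary~2.5, p.~310]{Baik1997} we may pass to an $R^3$-subset $Y'\subseteq Y\subseteq X$, and Theorem~\ref{theo-R3} is contradicted once more.

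Since all three cases lead to a contradiction, $X$ cannot contain an $R^i$-set for any $i\in\{1,2,3\}$. The only nontrivial content here is encapsulated in Theorem~\ref{theo-R3} (already proved) and in the two cited inclusions among the classes of $R^i$-sets; nothing further is needed. Consequently this final proof is essentially a one-line citation argument, and no step presents any genuine obstacle.
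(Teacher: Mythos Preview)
Your proof is correct and matches the paper's own argument exactly: the paper also reduces to Theorem~\ref{theo-R3} via \cite[Theorem~2.3]{Baik1997} (every $R^2$-set is an $R^3$-set) and \cite[Corollary~2.5]{Baik1997} (every $R^1$-set contains an $R^3$-set).
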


We note that the converse of our last theorem fails (see \cite[Example~2.8]{Garcia2015}).
On the other hand, in \cite[Example~3.6]{GarciaC} the authors exhibited a  dendroid $X$ that contains an $R^i$-set and they proved  that $\mathcal S_c(X)$ has exactly $\mathfrak c$  arc components. 
Hence, the following question seems natural and interesting (cf. Theorem~\ref{theoit875i}).

\begin{question}
  Let $X$ be a continuum that contains an $R^i$-set for some $i \in \{1,2,3\}$. Is it true that $\mathcal S_c(X)$ has exactly $\mathfrak c$ arc components?
\end{question}  

In connection with this, in \cite[Question~3.8]{GarciaC} the authors asked if $\mathcal S_c(X)$ can have only a finite (but more than one) or even a countable number of arc components, for an arcwise connected space $X$.
Observe that Theorem~\ref{theoit875i} gives a partial answer to this question.

Further, as a consequence of Corollary~\ref{corolit875iii} and Theorem~\ref{theo:Ri} we have the following result and another interesting question.

\begin{corollary} \label{corol:ambas-cond}
    Let $X$ be a uniquely arcwise connected continuum (e.g. a dendroid). If $\mathcal S_c(X)$ is arcwise connected, then $X$ is uniformly arcwise connected and does not contain $R^i$-sets for any $i \in \{1,2,3\}$.
\end{corollary}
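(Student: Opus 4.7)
The plan is to observe that this corollary is a direct synthesis of the two principal necessary conditions already established in the subsection: Corollary~\ref{corolit875iii} (which handles uniform arcwise connectedness) and Theorem~\ref{theo:Ri} (which rules out $R^i$-sets). Since both of these conclusions already assume exactly the hypothesis we are given (or something weaker), there is essentially nothing new to prove beyond verifying that the hypotheses match and that arcwise connectedness of $\mathcal S_c(X)$ yields pathwise connectedness for the invocation of Theorem~\ref{theo:Ri}.

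The first step is to note that $\mathcal S_c(X)$ is Hausdorff (being a subspace of $\mathcal{CL}(X)$ endowed with the Vietoris topology, where $X$ is metric), so by Theorem~\ref{lema:path-con-implica-arco-con} its arcwise connectedness is equivalent to its pathwise connectedness. In particular, the hypothesis ``$\mathcal S_c(X)$ is arcwise connected'' implies both ``$\mathcal S_c(X)$ is arcwise connected'' (needed for Corollary~\ref{corolit875iii}) and ``$\mathcal S_c(X)$ is pathwise connected'' (needed for Theorem~\ref{theo:Ri}).

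Having secured both hypotheses, I would conclude in two short steps. First, since $X$ is a uniquely arcwise connected continuum with $\mathcal S_c(X)$ arcwise connected, Corollary~\ref{corolit875iii} gives that $X$ is uniformly arcwise connected. Second, since $X$ is in particular a continuum and $\mathcal S_c(X)$ is pathwise connected, Theorem~\ref{theo:Ri} yields that $X$ contains no $R^i$-sets for any $i \in \{1,2,3\}$. Conjoining these two conclusions gives the statement of the corollary.

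There is no substantive obstacle in this proof; the work has already been done in Theorems~\ref{theoit875i} and~\ref{theo-R3}. The only mild subtlety worth flagging explicitly is the equivalence of arcwise and pathwise connectedness in the Hausdorff setting, so that the conclusion of Theorem~\ref{theo:Ri} becomes available from the nominally weaker hypothesis of arcwise (rather than pathwise) connectedness of the hyperspace.
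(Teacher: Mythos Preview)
Your proposal is correct and matches the paper's approach exactly: the paper simply states that the corollary is a consequence of Corollary~\ref{corolit875iii} and Theorem~\ref{theo:Ri}. One minor remark: you do not actually need Theorem~\ref{lema:path-con-implica-arco-con} here, since arcwise connectedness trivially implies pathwise connectedness (an arc yields a path); the Hausdorff hypothesis is only needed for the reverse implication.
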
    

\begin{question} \label{preg:uniqarcwiseconn}
   Is the converse of Corollary~\ref{corol:ambas-cond} true? In other words, do the uniform arc connectedness and the absence of $R^i$-sets characterize the arc connectedness of $\mathcal S_c(X)$, when $X$ is a uniquely arcwise connected continuum?
\end{question}   

In particular:

\begin{question}
  What about Question~\ref{preg:uniqarcwiseconn} when $X$ is a dendroid?
\end{question}  

\vspace{.1cm}

A nonempty subset $A$ of a space $X$ is said to be \textsl{homotopically fixed} if for each mapping $h : X \times [0,1] \to X$ such that $h(x,0) = x$ for every $x \in X$, we have that $h[A \times [0,1]] \subseteq A$. Since each $R^i$-set is homotopically fixed \cite[Theorem~3.2]{Baik1997}, it is natural to ask if the condition ``$X$ does not contain $R^i$-sets" in Theorem~\ref{theo:Ri} can be replaced with ``$X$ does not have homotopically fixed subsets". This is false: the continuum defined in \cite[Example~5]{CharatonikW1986} contains a homotopically fixed degenerate subset, does not contain $R^i$-sets and its hyperspace of nontrivial convergent sequences is pathwise connected, as we now show.


\begin{example}
There exists a continuum $X$ having a homotopically fixed  subset, such that $\mathcal{S}_c(X)$ is pathwise connected. In particular, $X$ does not contain $R^i$-sets.

We recall the construction of $X$ made in \cite[Example~5]{CharatonikW1986} for the sake of completeness. In the Euclidean plane, let $a = (0,1)$, $b = (0,0)$, $c = (0,-1)$ and for each $n,m \in \mathbb{N}$ let $$a_n = (2^{-3n},1), \ b_n = (2^{-3n},0), \ c_n = (2^{-(3n+1)},-1), \ d_n = (2^{-(3n-1)},-1),$$ $$b_{n,m} = (2^{-3n}(1-2^{-(m+3)}),0), \ b'_{n,m} = (2^{-3n}(1+2^{-(m+3)}),0),$$ $$c_{n,m} = (2^{-(3n+1)}(1-2^{-(m+3)}),-1) \ \text{and} \ d_{n,m} = (2^{-(3n-1)}(1+2^{-(m+3)}),-1).$$ Observe that $a_n \to a$, $b_n \to b$, $c_n \to c$ and $d_n \to c$ as $n \to \infty$; further, for any fixed $n \in \mathbb{N}$, we have that $b_{n,m} \to b_n$, $b'_{n,m} \to b_n$, $c_{n,m} \to c_n$ and $d_{n,m} \to d_n$ as $m \to \infty$.

For each $n \in \mathbb{N}$, set $Q_n = a_nb_nc_n \cup b_nd_n \cup \left( \bigcup\limits_{m\in \mathbb{N} } a_n b_{n,m} c_{n,m} \right) \cup \left( \bigcup\limits_{m \in \mathbb{N}} a_n b'_{n,m} d_{n,m} \right)$ (see Notation~\ref{broken}). Next, we define $Y = caa_1 \cup \bigcup\limits_{n \in \mathbb{N}} Q_n$ and let $Y'$ be the image of $Y$ under  the central symmetry with respect to the origin $b = (0,0)$. Such central symmetry will be denoted by $\pi$. Finally, set $X = Y \cup Y'$. Notice that $\{b\}$ is homotopically fixed. 

In order to prove that $\mathcal{S}_c(X)$ is pathwise connected, define 
    $$\mathcal{Y} = \{ S \cup L : S \in \mathcal{S}_c(Y \cup \pi[aa_1]) \cup \mathcal{S}_c(Y' \cup aa_1), 
      L \in [X]^{< \omega} \}.$$ 
    The path connectedness of $X$, the contractibility of $Y \cup \pi[aa_1]$ and $Y' \cup aa_1$, Theorem~\ref{teor:Xcontr-implica-Sc(X)-path-conn} and the fact that $\mathcal{S}_c(Y \cup \pi[aa_1]) \cap \mathcal{S}_c(Y'\cup aa_1) \neq \emptyset$ guarantee that $\mathcal{Y}$ is pathwise connected. 
    
    Now fix $S \in \mathcal{S}_c(X) \setminus \mathcal{Y}$. We will find a path in $\mathcal S_c(X)$ joining $S$ with an element of $\mathcal Y$. To this end observe that
   $$(\dag) \ \ |\{ n \in \mathbb{N} : S \cap Q_n \neq \emptyset \}| \ = \ \omega \ = \ |\{n \in \mathbb{N} : S  \cap \pi[Q_n] \neq \emptyset\}|;$$ 
 thus, $\lim S \in ca.$
Since both $X$ and $\mathcal Y$ are symmetric with respect to the origin, we may assume that 
     $$(\ddag) \ \ \lim S \in cb.$$
Let $\rho : X \to [-1,1]$ be the second coordinate projection.
\mbox{Define  $\lambda : S \setminus aa_1 \to X$ by}
    $$\lambda(x) = \left\{ \begin{array}{cl}
         x, & \text{if} \ x \in  \rho^{-1}[ \{ -1\}]  , \\    
         c, & \text{if} \ x \in ac  , \\
         c_n, & \text{if} \ x \in a_nb_nc_n  \ \text{for some} \ n \in \mathbb{N}, \\
         d_n, & \text{if} \ x \in b_nd_n \setminus \{b_n\} \ \text{for some} \ n \in \mathbb{N}, \\
         c_{n,m}, & \text{if} \ x \in a_nb_{n,m}c_{n,m} \ \text{for some} \ n,m \in \mathbb{N}, \\
         d_{n,m}, & \text{if} \ x \in a_nb'_{n,m}d_{n,m} \ \text{for some} \ n,m \in \mathbb{N}, \\
         \pi(a_n), & \text{if} \ y \in \pi[Q_n] \ \text{for some} \ n \in \mathbb{N}. \\
           \end{array} \right.$$ 
Conditions $(\dag)$ and $(\ddag)$ give $\lambda[S \setminus aa_1] \in  \mathcal{S}_c(Y \cup \pi[aa_1])$ and the continuity of $\lambda$.

Next we define a function $H:S \times [0,1] \to X$.
Let $(x, t) \in S \times [0,1]$. If $\rho(x) \notin \{-1,1\}$, let $H (x, t)$ be the unique point in the arc $[ x , \lambda (x)]$ whose second coordinate is $\rho(x)-t(1+ \rho(x))$ (see  Notation~\ref{unique-arcs}). If $\rho(x) \in \{-1,1\}$, simply define $H (x, t) = x$. Using $(\ddag)$ one can show that $H$  is continuous.
Hence, it follows from $(\dag)$ and Theorem~\ref{teor:func-ind-cont} that the function $\alpha : [0,1] \to \mathcal{S}_c(X)$ given by 
  \[\alpha(t) \ = \ \mathcal{K}(H) (S \times \{t\}) \ = \ H[S \times \{t\}]  \]
  is well defined and continuous. Thus, $\alpha$ is a path in $\mathcal S_c(X)$ from $S$ to some element of $\mathcal Y$.
This shows that $\mathcal{S}_c(X)$ is pathwise connected.

Finally, $X$ does not contain $R^i$-sets for any $i \in \{1,2,3\}$ by  Theorem~\ref{theo:Ri}.
\end{example}


\section{Local path connectedness}

In this section we characterize the local path connectedness of the hyperspace $\mathcal S_c(X)$ in terms of that of the space $X$ (Corollary~\ref{corol:equiv-locpathcon}). This allows us to provide another answer to Problem~\ref{probl:garcia}  (Corollary~\ref{corol:equiv-entre-path-conn y local-path-conn-XySc(X)}).

\begin{lemma} \label{lema:los-Q_U-2}
  Let $\mathcal U$ be a finite cellular family of a space $X$.  For each $U \in \mathcal U$ define $Q_U = \{ S \in \langle \mathcal U \rangle _c : \lim S \in U \}$. Let $U, V \in \mathcal U$ be such that both   $U$ and $V$ contain  arcs $A_U$ and $A_V$, respectively. If $x_\omega \in A_U$ and $y_\omega \in A_V$, then there exists a path $\gamma : [0,1] \to Q_U \cup Q_V$ such that $\lim \gamma (0) = x_\omega$ and $\lim \gamma(t) = y_\omega$ for each $t \in (0,1]$.
  In particular, $\gamma(0) \in Q_U$ and $\gamma \big( (0,1] \big) \subseteq Q_V$.
\end{lemma}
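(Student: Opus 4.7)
The plan is to construct $\gamma$ explicitly by interpolating between two kinds of nontrivial convergent sequences in $\langle\mathcal U\rangle_c$: one whose infinite tail lies along $A_U$ and accumulates at $x_\omega$ (this will be $\gamma(0)$) and ones whose infinite tail lies along $A_V$ and accumulates at $y_\omega$ (these will be the $\gamma(t)$ for $t>0$). Vietoris continuity at $t=0$ forces the $y_\omega$-tail to collapse onto $\{y_\omega\}$ as $t\to 0^+$ while a new tail simultaneously ``appears'' around $x_\omega$ in $A_U$; the crucial point is to orchestrate both motions inside the arcs $A_U$ and $A_V$, so that no individual point ever has to migrate between the disjoint open sets $U$ and $V$.

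Setup: passing to subarcs if necessary, I assume $x_\omega$ and $y_\omega$ are endpoints of $A_U$ and $A_V$, and fix homeomorphisms $\beta_U\colon[0,1]\to A_U$ and $\beta_V\colon[0,1]\to A_V$ with $\beta_U(0)=x_\omega$ and $\beta_V(0)=y_\omega$; set $a_n=\beta_U(1/n)$ and $b_n=\beta_V(1/n)$. Choose a transversal $F=\{z_W:W\in\mathcal U\setminus\{U,V\}\}$ with $z_W\in W$ (taking $F=\emptyset$ if $\mathcal U=\{U,V\}$). For each $n\in\mathbb{N}$ put $I_n=[1/(n+1),1/n]$ and pick continuous $p_n\colon I_n\to A_U$ with $p_n(1/n)=x_\omega$ and $p_n(1/(n+1))=a_{n+1}$, and continuous $q_n\colon I_n\to A_V$ with $q_n(1/n)=b_n$ and $q_n(1/(n+1))=y_\omega$ (e.g., monotone reparametrizations of $\beta_U$ and $\beta_V$). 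Define
\[
\gamma(0)=\{x_\omega,y_\omega\}\cup F\cup\{a_n:n\in\mathbb{N}\},
\]
and for $t\in I_n$,
\[
\gamma(t)=\{x_\omega,y_\omega\}\cup F\cup\{a_k:1\le k\le n\}\cup\{b_k:k\ge n+1\}\cup\{p_n(t),q_n(t)\}.
\]

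A direct check shows that the two prescriptions of $\gamma(1/n)$ (from $I_n$ and from $I_{n-1}$) agree: on $I_n$ the moving contribution at $t=1/n$ is $\{x_\omega,b_n\}$, on $I_{n-1}$ it is $\{a_n,y_\omega\}$, and combining with the corresponding fixed parts gives the same union. Every $\gamma(t)$ with $t>0$ is a nontrivial convergent sequence to $y_\omega$ (finitely many isolated points together with the tail $\{b_k:k\ge n+1\}$) and $\gamma(0)$ is a nontrivial convergent sequence to $x_\omega$; hence $\gamma(0)\in Q_U$ and $\gamma(t)\in Q_V$ for $t>0$. Membership in $\langle\mathcal U\rangle_c$ is automatic since $x_\omega$, $y_\omega$, and the $z_W$'s witness intersection with every $W\in\mathcal U$. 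Continuity on $(0,1]$ follows from the continuity of $p_n,q_n$, Lemma~\ref{lema:union-finita-cont}, and the pasting lemma.

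The main obstacle is continuity at $t=0$. Given a basic neighborhood $\langle\mathcal W\rangle$ of $\gamma(0)$ with $\mathcal W\in\mathfrak C(X)$, let $W_U,W_V\in\mathcal W$ be the (disjoint) elements containing $x_\omega,y_\omega$. Continuity of $\beta_U,\beta_V$ at $0$ yields $N_0$ with $\beta_U([0,1/N_0])\subseteq W_U$ and $\beta_V([0,1/N_0])\subseteq W_V$, so for $t\in I_n$ with $n\ge N_0$ the variable pieces $p_n(t),q_n(t)$ and the entire tail $\{b_k:k\ge n+1\}$ lie in $W_U\cup W_V$, while the fixed pieces lie in $\gamma(0)\subseteq\bigcup\mathcal W$. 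Each $W\in\mathcal W$ meets $\gamma(t)$: $W_U$ via $x_\omega$, $W_V$ via $y_\omega$, and any other $W$ either via the corresponding $z_W\in F$ or via some $a_k$; because $a_k\to x_\omega\in W_U$ and $W\cap W_U=\emptyset$ for $W\ne W_U$, only finitely many $a_k$'s belong to any such $W$, and enlarging $N_0$ to an appropriate $N$ keeps all of them inside $\gamma(t)$ for $t\in[0,1/N]$. Consequently $\gamma(t)\in\langle\mathcal W\rangle$ for $t\in[0,1/N]$, establishing continuity at $0$ and completing the construction.
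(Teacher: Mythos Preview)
Your proof is correct and follows essentially the same approach as the paper's: an explicit piecewise construction on the intervals $I_n=[\tfrac{1}{n+1},\tfrac{1}{n}]$ with one ``moving point'' in each arc, a transversal through the remaining members of $\mathcal U$, and a direct Vietoris-continuity check at $t=0$ via a cellular basic neighborhood. The only differences are cosmetic: the paper lets its two moving points slide between consecutive terms $x_n\to x_{n+1}$ and $y_n\to y_{n+1}$ of fixed sequences, while you anchor them at the limit points (from $x_\omega$ to $a_{n+1}$ and from $b_n$ to $y_\omega$) and keep both $x_\omega,y_\omega$ in every $\gamma(t)$; the well-definedness and continuity verifications are otherwise identical in spirit.
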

   \begin{proof} 
       Fix $S \in \mathcal S_c(A_U)$ such that $\lim S = x_\omega$ and let $S = \{ x_n : n \in   \omega +1 \} $ be an adequate enumeration of $S$. For each $n \in \mathbb{N}$ denote by $\overline{x_n x_{n+1}}$ the subarc of $A_U$ whose end points are $x_n$ and $x_{n+1}$. Pick a path $\alpha _n : \textstyle  [\frac{1}{n+1}, \frac{1}{n}] \to \overline{x_n x_{n+1}}$ such that $\alpha_n( \frac{1}{n} ) = x_n$ and $\alpha _n(\frac{1}{n+1}) = x_{n+1}$. 
Similarly,  we may take $ \{ y_n : n \in    \omega +1 \} \in \mathcal S_c(A_V)$ and for each $n \in \omega$ denote by $\overline{y_n y_{n+1}}$ the subarc of $A_V$ whose end points are $y_n$ and $y_{n+1}$. Also choose a path $\beta _n : \textstyle  [\frac{1}{n+1}, \frac{1}{n}] \to \overline{y_n y_{n+1}}$ such that $\beta_n( \frac{1}{n} ) = y_n$ and $\beta _n (\frac{1}{n+1}) = y_{n+1}$.

For each $W \in \mathcal U \setminus \{U,V\}$ fix a point $x_W \in W$ and set $Z = \{x_W: W \notin \{U,V\} \}$. Define $\gamma :[0,1] \to Q_U \cup Q_V$ by
     \begin{equation*}
         \gamma(t) = \left\{ \begin{array}{lll}
            \{x_m : m \le n \} \cup \{ \alpha_n(t)\} & & \\
\ \ \ \ \ \ \ \ \ \ \ \ \ \ \ \ \ \ \            \cup \{y_m : m \ge n+1 \}   \cup \{ \beta_n(t) \} \cup Z,  & \textrm{if} & t \in [\frac{1}{n+1},\frac{1}{n}]; \\
            \{ x_m : m \in  \omega +1 \} \cup \{y_\omega\} \cup Z, & \textrm{if} & t =0.
          \end{array} \right.
       \end{equation*}   
Then $\gamma$ is well defined and using  \cite[Theorem~18.3, p.~108]{Munkres} and Lemma~\ref{lema:union-finita-cont} one can show that it is continuous in $(0,1]$. In order to check that $\gamma$  is continuous at $t=0$, fix a finite cellular family $\mathcal W$ such that $\{ x_m : m \in  \omega +1 \} \cup \{y_\omega\} \cup Z \in \langle \mathcal W \rangle _c $. Let $W_0 , W_1 \in \mathcal W$ such that $x_\omega \in W_0$ and $y_\omega \in W_1$; further, choose $N \in \mathbb{N}$ satisfying that $\bigcup _{n \ge N} \overline{x_n x_{n+1}} \subseteq W_0$ and $\bigcup _{n \ge N} \overline{y_n y_{n+1}} \subseteq W_1$. Hence, if $t \in [0, \frac{1}{N})$ a routine argument shows that $\gamma(t) \in \langle \mathcal W \rangle _c$ and, thus, $\gamma$ is continuous. Finally, it is immediate to see that $\lim \gamma (0) = x_\omega$ and $\lim \gamma(t) = y_\omega$ for each $t \in (0,1]$.
   \end{proof}

\begin{theorem} \label{teor:local-con-lims-difs}
    Let $X$ be a space  and assume that $\mathcal U$ is a finite cellular family whose elements are pathwise connected. 
  Let $x  ,y  \in X$ be such that $X$ is first countable  and locally pathwise connected  at both $x $ and $y $.  If $S_x,S_y \in \mathcal \langle \mathcal U \rangle _c$ are such that $\lim S_x = x $ and $\lim S_y = y $, then  there exists a path in $\langle \mathcal U \rangle _c$ from $S_x$ to $S_y$.
\end{theorem}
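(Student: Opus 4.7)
The plan is to use Lemma~\ref{lema:los-Q_U-2} as a ``bridge'' that slides the limit from $x$ to $y$, and to connect $S_x$ and $S_y$ to the specific sequences appearing at the endpoints of that bridge. Let $U_x, U_y \in \mathcal U$ be the unique elements containing $x$ and $y$ (possibly equal). Since $S_x \to x \in U_x$ and the members of $\mathcal U$ are pairwise disjoint open sets, $S_x \cap U_x$ is an infinite tail of $S_x$ converging to $x$, while $S_x \cap W$ is finite and nonempty for every $W \in \mathcal U \setminus \{U_x\}$; similarly for $S_y$ and $U_y$. Using Theorem~\ref{lema:path-con-implica-arco-con} together with the path-connectedness of each member of $\mathcal U$, fix an arc $A_{U_x} \subseteq U_x$ with $x \in A_{U_x}$, an arc $A_{U_y} \subseteq U_y$ with $y \in A_{U_y}$, adequate enumerations $\{x_m : m \in \omega+1\} \subseteq A_{U_x}$ and $\{y_m : m \in \omega+1\} \subseteq A_{U_y}$ with $x_\omega = x$ and $y_\omega = y$, and a point $z_W \in W$ for each $W \in \mathcal U \setminus \{U_x, U_y\}$. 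Apply Lemma~\ref{lema:los-Q_U-2} with $U = U_x$, $V = U_y$, the chosen arcs, and $Z = \{z_W : W \in \mathcal U \setminus \{U_x, U_y\}\}$; the construction there goes through verbatim in the case $U = V$ as well, so in both cases it delivers a path $\gamma : [0,1] \to \langle \mathcal U \rangle_c$ with $\lim \gamma(0) = x$, $\lim \gamma(t) = y$ for every $t \in (0,1]$, and with explicit endpoints $\gamma(0)$ and $\gamma(1)$.

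Next I connect $S_x$ to $\gamma(0)$ by a path in $\langle \mathcal U \rangle_c$ in two moves. Move~(a): for each $W \in \mathcal U \setminus \{U_x\}$, use path-connectedness of $W$ and Remark~\ref{remark:path-conn} applied to $\mathcal F(W)$ to build a path from $S_x \cap W$ to $\gamma(0) \cap W$ inside $\mathcal F(W)$; combine these paths over all such $W$ via Lemma~\ref{lema:union-finita-cont}, keeping the tail $S_x \cap U_x$ fixed. Each intermediate sequence lies in $\bigcup \mathcal U$, is a nontrivial convergent sequence with limit $x$, and meets every member of $\mathcal U$ (by the preserved tail in $U_x$ and by the paths' images in the other members), so it lies in $\langle \mathcal U \rangle_c$. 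Move~(b): inside $U_x$, apply Lemma~\ref{lema:vecindades-locales-1} to connect $S_x \cap U_x$ to the $U_x$-part of $\gamma(0)$, both of which are elements of $\mathcal S_c(U_x)$ converging to $x$. This is permitted because $U_x$ is path-connected and, being an open subset of $X$ containing $x$, inherits first countability and local pathwise connectedness at $x$; moreover, hypothesis~(*) of Lemma~\ref{lema:vecindades-locales-1} is satisfied because local pathwise connectedness at $x$ supplies a path-connected neighborhood of $x$ inside each basic $V_n$, into which tails of both sequences must eventually fall. Extending Move~(b) by the now-fixed points in each $W \ne U_x$ via Lemma~\ref{lema:union-finita-cont} keeps the path in $\langle \mathcal U \rangle_c$. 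The symmetric two-move argument at $y$, using first countability and local pathwise connectedness of $X$ at $y$, yields a path in $\langle \mathcal U \rangle_c$ from $\gamma(1)$ to $S_y$.

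The concatenation of the three paths --- from $S_x$ to $\gamma(0)$, $\gamma$ itself, and from $\gamma(1)$ to $S_y$ --- is the desired path in $\langle \mathcal U \rangle_c$. The main technical obstacle is not a single deep step but the bookkeeping ensuring that every intermediate sequence both lies inside $\bigcup \mathcal U$ and meets every element of $\mathcal U$, i.e., the defining condition of $\langle \mathcal U \rangle_c$. This is handled uniformly by the ``fixed representative'' device: whenever a move is localized inside a single $W \in \mathcal U$, a witnessing point is preserved in each of the other members of $\mathcal U$ --- the same strategy organizing the proofs of Lemmas~\ref{lema:vecindades-locales-1} and~\ref{lema:los-Q_U-2} themselves.
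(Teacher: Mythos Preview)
Your proof is correct and follows essentially the same route as the paper: Lemma~\ref{lema:los-Q_U-2} provides the bridge between limits, Corollary~\ref{corol:loc-path-con-1onum} (which is exactly your appeal to Lemma~\ref{lema:vecindades-locales-1} plus local pathwise connectedness) connects two sequences sharing the same limit inside the relevant $U$, and Remark~\ref{remark:path-conn} with Lemma~\ref{lema:union-finita-cont} handles the finite pieces in the remaining cells. The only organizational difference is that the paper splits into the cases $U_x=U_y$ and $U_x\neq U_y$, treating the first directly and reducing the second to the first, whereas you observe that the construction in Lemma~\ref{lema:los-Q_U-2} goes through unchanged when $U=V$ and thereby handle both cases at once.
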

   \begin{proof}
      Let $U_x, U_y \in \mathcal U$ be such that $x  \in U_x$ and $y  \in U_y$. We consider two cases. \medskip
      
     \noindent \textbf{Case 1.} $U_x = U_y$. \medskip

    Fix  $U \in \mathcal U \setminus \{U_x\}$. In this case the cellularity of $\mathcal U$ implies that both $S_x \cap U$ and $S_y \cap U$ belong to $\mathcal  F(U)$. Hence, by Remark~\ref{remark:path-conn}  we may take a path $\varphi_{U}: [0, 1] \to \mathcal F(U)$ such that $\varphi_{U}(0) = S_x \cap U$ and $\varphi_{U}(1) = S_y \cap U$.
     
Notice that  $S_x \cap U_x \in \mathcal S_c( U_x)$.
 Let $A \subseteq U_x$ be an arc that contains both $x$ and $y $ (Theorem~\ref{lema:path-con-implica-arco-con}) and let $A_x , A_y \in \mathcal S_c(A)$ be such that $\lim A_x = x$ and $\lim A_y = y $. According to Corollary~\ref{corol:loc-path-con-1onum}, the sequences $S_x \cap U_x$ and $A_x$ can be joined with a path in $\mathcal S_c( U_x)$. Similarly, $S_y \cap U_x$ and $A_y$ can be joined with a path in $\mathcal S_c( U_x)$. Moreover, by \cite[Corollary~1.5]{Garcia2015}, $A_x$ and $A_y$ can be joined with a path in $\mathcal S_c(A) \subseteq \mathcal S_c( U_x)$. Thus,  there exists a path $\varphi_{U_x} : [0,1] \to \mathcal S_c( U_x)$ such that $\varphi_{U_x}(0) = S_x \cap U_x$ and $\varphi_{U_x}(1) = S_y \cap U_x$. Finally define $\alpha : [0,1] \to \langle \mathcal U \rangle _c$ by $\alpha (t) =   \bigcup \{ \varphi_U (t) : U \in \mathcal U \}$. Lemma~\ref{lema:union-finita-cont} guarantees that  $\alpha$ is a path in $\langle \mathcal U \rangle _c$ joining $S_x$ and  $S_y$. \medskip


      \noindent \textbf{Case 2.} $U_x \ne U_y$. \medskip
      
      Using Theorem~\ref{lema:path-con-implica-arco-con} and Lemma~\ref{lema:los-Q_U-2} one can show that there exists a path $\gamma : [0,1] \to \langle \mathcal U \rangle _c$ such that $\lim \gamma (0) = x \in U_x$ and $\lim \gamma (1) = y \in U_y$. 
By Case~1 there exist two paths in $\langle \mathcal U \rangle _c$, one from $S_x$ to $\gamma (0)$ and the other from $S_y$ to $\gamma(1)$. The result follows.
    \end{proof}

\begin{corollary} \label{corol:Xfirst-count-loc-pathcon-implicaXtb}
    Let $X$ be a first countable space. If $X$ is  locally pathwise connected, then  so  is $\mathcal S _c(X)$.  
\end{corollary}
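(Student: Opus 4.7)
The plan is to show that $\mathcal{S}_c(X)$ has a base of pathwise connected open neighborhoods at each point, which by Lemma~\ref{lema:LAC-equiv} is equivalent to local pathwise connectedness. Fix $S \in \mathcal{S}_c(X)$ and an open neighborhood $\mathsf{W}$ of $S$. By Proposition~\ref{pro:UCXbase}, there exists $\mathcal{U} \in \mathfrak{C}(X)$ such that $S \in \langle \mathcal{U} \rangle_c \subseteq \mathsf{W}$; the goal is to refine $\mathcal{U}$ to a finite cellular family whose elements are pathwise connected, and such that the resulting basic open set is pathwise connected and contained in $\langle \mathcal{U} \rangle_c$.

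First, I would construct the refinement. For each $x \in S$, let $U(x)$ be the unique element of $\mathcal{U}$ containing $x$, and let $V_x$ be the path component of $U(x)$ that contains $x$; by Lemma~\ref{lema:LAC-equiv} (applied to the locally pathwise connected space $X$), each $V_x$ is open in $X$. Let $x_\omega = \lim S$. Because $V_{x_\omega}$ is an open neighborhood of $x_\omega$, all but finitely many points of $S$ lie in $V_{x_\omega}$, so the collection $\mathcal{V} = \{V_x : x \in S\}$ is actually finite. Two such path components $V_x$ and $V_y$ either coincide or are disjoint (when contained in the same element of $\mathcal{U}$) or are disjoint (when contained in different elements of $\mathcal{U}$, by cellularity of $\mathcal{U}$); deleting repetitions, $\mathcal{V}$ is a finite cellular family of pathwise connected open sets. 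Moreover $S \subseteq \bigcup \mathcal{V}$ and $S \cap V \ne \emptyset$ for every $V \in \mathcal{V}$, so $S \in \langle \mathcal{V} \rangle_c$. Since each $V \in \mathcal{V}$ is contained in some element of $\mathcal{U}$, Lemma~\ref{lema:viet-conten} yields $\langle \mathcal{V} \rangle_c \subseteq \langle \mathcal{U} \rangle_c \subseteq \mathsf{W}$.

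Next, I would verify that $\langle \mathcal{V} \rangle_c$ is pathwise connected. Let $T \in \langle \mathcal{V} \rangle_c$ be arbitrary, and set $x = \lim S = x_\omega$ and $y = \lim T$. The hypotheses of Theorem~\ref{teor:local-con-lims-difs} are met: $\mathcal{V}$ is a finite cellular family with pathwise connected elements, and $X$ is first countable and locally pathwise connected at every point, in particular at $x$ and $y$. Applying that theorem to $S_x := S$ and $S_y := T$ produces a path in $\langle \mathcal{V} \rangle_c$ from $S$ to $T$. Since $T$ was arbitrary, $\langle \mathcal{V} \rangle_c$ is pathwise connected, giving the desired pathwise connected open neighborhood of $S$ inside $\mathsf{W}$.

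I do not anticipate a genuinely hard step here; the crux is the refinement argument, and the only subtlety is confirming that $\mathcal{V}$ is finite (which uses the convergence of $S$) and cellular (which uses that path components of disjoint open sets are automatically disjoint). Once this is in place, the heavy lifting has already been done by Theorem~\ref{teor:local-con-lims-difs}, and the corollary follows.
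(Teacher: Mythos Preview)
Your proposal is correct and follows essentially the same approach as the paper: the paper's proof simply says that ``since $X$ is locally pathwise connected, by Lemma~\ref{lema:LAC-equiv} we may assume that the elements of $\mathcal U$ are pathwise connected'' and then invokes Theorem~\ref{teor:local-con-lims-difs}, whereas you spell out explicitly the refinement by path components that justifies this ``we may assume'' (including the verification, via Lemma~\ref{lema:viet-conten}, that the refined Vietoris set sits inside the original one). The substance is identical.
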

  \begin{proof}
          Fix $S \in \mathcal S _c(X)$ and a finite cellular family $\mathcal U$ such that $S \in \langle \mathcal U \rangle _c$; since $X$ is locally pathwise connected, by Lemma~\ref{lema:LAC-equiv} we may assume that the elements of $\mathcal U$ are pathwise connected. Applying Theorem~\ref{teor:local-con-lims-difs} we obtain that $\langle \mathcal U \rangle _c$ is pathwise connected.
  \end{proof}

\begin{corollary} \label{corol:Xfirst-count-path-loc-path}
    Let $X$ be a first countable space. If $X$ is pathwise connected and locally pathwise connected, then  $\mathcal S _c(X)$ is pathwise connected and locally pathwise connected.
\end{corollary}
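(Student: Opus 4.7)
My plan is to observe first that the local path connectedness of $\mathcal{S}_c(X)$ is immediate from Corollary~\ref{corol:Xfirst-count-loc-pathcon-implicaXtb}, since $X$ is first countable and locally pathwise connected by hypothesis. So the only nontrivial content of the statement is the path connectedness of $\mathcal{S}_c(X)$, and I will derive it by combining local path connectedness (just obtained) with the density result of part (iii) of Lemma~\ref{lema:mathcalK-path-conn}.

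For the path connectedness, the plan is as follows. On one hand, Corollary~\ref{corol:Xfirst-count-loc-pathcon-implicaXtb} gives local path connectedness of $\mathcal{S}_c(X)$, and Lemma~\ref{lema:LAC-equiv} then ensures that each path component of $\mathcal{S}_c(X)$ is open in $\mathcal{S}_c(X)$; since the complement of a path component is a union of (open) path components, each path component is in fact clopen. On the other hand, because $X$ is pathwise connected, part (iii) of Lemma~\ref{lema:mathcalK-path-conn} provides a dense path component $\mathcal{D} \subseteq \mathcal{S}_c(X)$, namely the one containing the family $\mathcal{L}$ defined there.

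Putting these together, $\mathcal{D}$ will be both clopen and dense in $\mathcal{S}_c(X)$, and this forces $\mathcal{D} = \mathcal{S}_c(X)$. Hence $\mathcal{S}_c(X)$ consists of a single path component, i.e., it is pathwise connected. There is no real obstacle in this argument: all the substantive work was already carried out in Lemma~\ref{lema:mathcalK-path-conn} and Corollary~\ref{corol:Xfirst-count-loc-pathcon-implicaXtb}, and the finishing step is simply the standard topological observation that a clopen dense subspace of a space coincides with the whole space.
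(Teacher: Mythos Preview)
Your proof is correct, but it takes a different route from the paper's. The paper obtains path connectedness in one line by applying Theorem~\ref{teor:local-con-lims-difs} with $\mathcal U = \{X\}$: since $X$ is pathwise connected, first countable, and locally pathwise connected everywhere, that theorem directly gives a path in $\langle\{X\}\rangle_c = \mathcal S_c(X)$ between any two sequences. Your argument instead passes through Corollary~\ref{corol:Xfirst-count-loc-pathcon-implicaXtb} (whose proof already rests on Theorem~\ref{teor:local-con-lims-difs}) to get local path connectedness, then invokes Lemma~\ref{lema:LAC-equiv} to make the path components clopen, and finishes with the density statement from Lemma~\ref{lema:mathcalK-path-conn}(iii). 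The paper's approach is more direct and constructive; yours is a clean ``soft'' argument that illustrates the general principle that a locally pathwise connected space with a dense path component is pathwise connected, at the cost of invoking an extra lemma from Section~3 that the paper's proof does not need.
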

  \begin{proof}
     Apply Theorem~\ref{teor:local-con-lims-difs} to $\mathcal U = \{X \}$ to obtain that $\mathcal S _c(X)$ is  pathwise connected.
   \end{proof}

Since locally connected continua are pathwise connected and locally pathwise connected \cite[8.23 and 8.25, pp.~130--131]{nadler-cont}, we obtain the following result.   

\begin{corollary}
  If $X$ is a locally connected continuum, then $\mathcal \mathcal{S}_c(X)$ is pathwise connected and locally pathwise connected.
\end{corollary}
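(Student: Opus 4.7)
The plan is to observe that the corollary is an immediate consequence of Corollary~\ref{corol:Xfirst-count-path-loc-path} combined with classical facts about locally connected continua. The verification essentially reduces to checking the three hypotheses of that corollary: first countability, path connectedness, and local path connectedness of $X$.

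First, since $X$ is a continuum, it is metrizable by definition, hence first countable. Second, the cited references (\cite[8.23 and 8.25, pp.~130--131]{nadler-cont}) ensure that every locally connected continuum is both pathwise connected and locally pathwise connected. Indeed, the Hahn--Mazurkiewicz theorem provides path connectedness, while local connectedness together with metric compactness upgrades to local path connectedness.

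With these three properties in hand, I would simply invoke Corollary~\ref{corol:Xfirst-count-path-loc-path}, which states precisely that a first countable, pathwise connected, locally pathwise connected space $X$ has a pathwise connected and locally pathwise connected hyperspace $\mathcal{S}_c(X)$. This yields both conclusions of the corollary simultaneously.

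There is no substantive obstacle here; the proof is essentially a one-line deduction. The only care needed is to correctly cite the classical facts about locally connected continua and to verify that the metrizability of a continuum (which is built into our definition of continuum in the preliminaries) supplies the first countability required by Corollary~\ref{corol:Xfirst-count-path-loc-path}. Thus the entire argument consists of assembling hypotheses and applying the previously established corollary.
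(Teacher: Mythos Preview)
Your proposal is correct and follows essentially the same route as the paper: the authors simply note that locally connected continua are pathwise connected and locally pathwise connected (citing \cite[8.23 and 8.25]{nadler-cont}) and then invoke Corollary~\ref{corol:Xfirst-count-path-loc-path}, with first countability coming from metrizability. There is nothing to add.
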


The assumption on first countability in Corollary~\ref{corol:Xfirst-count-path-loc-path} is restrictive, as our next example shows.

\begin{example}
  Given a cardinal $\kappa$,  endowed  with the discrete topology, consider the topological product $X=[0,1]\times\kappa$ and set $F=\{0\}\times\kappa$. The {\sl hedgehog of $\kappa$ spines}, denoted by $J(\kappa)$, is the quotient $X/F$, i.e., the topological space which results of collapsing $F$ to a single point. 
  
  Note that $J(\omega)$ is   pathwise connected and locally pathwise connected, but   it is not first countable. Nevertheless  $\mathcal S_c( J(\omega) )$ is pathwise connected, as we now show.
  Given $S \in \mathcal S_c( J(\omega) )$, observe that there exists $n \in \omega$ such that $S \subseteq J(n)$. Hence, if $S ,R \in \mathcal S_c( J(\omega) )$,  then $S ,R \subseteq J(m)$ for some  $m \in \omega$. Since $\mathcal S_c( J(m) )$ is pathwise connected \cite[Lemma~1.8]{Garcia2015}, then so is $\mathcal S_c( J(\omega) )$.
\end{example}

\vspace{.3cm}

Corollary~\ref{corol:Xfirst-count-path-loc-path} states that the path connectedness, together with  the local path connectedness of a space $X$, is a sufficient condition to obtain the path connectedness of $\mathcal \mathcal{S}_c(X)$ -among first countable spaces-. Nevertheless, local path connectedness is not a necessary condition for the path connectedness of $\mathcal \mathcal{S}_c(X)$,  as the cone over the Cantor set shows (see Theorem~\ref{teor:Xcontr-implica-Sc(X)-path-conn}). The path connectedness of $X$ is however necessary  (Theorem~\ref{teor:Sc(X)-path-conn-implicaXtb}). Hence, the  problem of
finding necessary and sufficient conditions on a space $X$ for $\mathcal \mathcal{S}_c(X)$ to be pathwise connected 
 seems very interesting and remains unsolved.

\begin{lemma} \label{lema:loc-path-con-implicaarco}
  Let $X$ be   a  space. If $\mathcal \mathcal{S}_c(X)$ is  locally pathwise connected at some element, then $X$ contains an arc.
\end{lemma}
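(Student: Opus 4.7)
The plan is to use the local path connectedness at some $S \in \mathcal{S}_c(X)$ to produce a non-constant path in $X$, whose image (being pathwise connected and non-degenerate) will contain an arc by Theorem~\ref{lema:path-con-implica-arco-con}. To get the non-constant path in $X$, I will lift a path in $\mathcal{S}_c(X)$ between two carefully chosen elements via Theorem~\ref{teor:trayec-gamma-enX}, starting at a point that is not in the terminal sequence.

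Let $S \in \mathcal{S}_c(X)$ be a point at which $\mathcal{S}_c(X)$ is locally pathwise connected, and fix an adequate enumeration $\{s_n : n \in \omega + 1\}$ of $S$. First I would choose a pathwise connected open neighborhood $\mathcal{V}$ of $S$ in $\mathcal{S}_c(X)$, and by Proposition~\ref{pro:UCXbase} shrink it so that $S \in \langle \mathcal{U} \rangle_c \subseteq \mathcal{V}$ for some $\mathcal{U} \in \mathfrak{C}(X)$. Let $U_\omega \in \mathcal{U}$ be the unique element containing $s_\omega$; since $s_n \to s_\omega$, all but finitely many of the $s_n$ lie in $U_\omega$, so we may pick some $N \in \omega$ with $s_N \in U_\omega$ and set $T_1 = S \setminus \{s_N\}$.

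Next I would verify that $T_1 \in \langle \mathcal{U} \rangle_c$ and $T_1 \neq S$. Clearly $T_1$ is still a nontrivial convergent sequence converging to $s_\omega$, so $T_1 \in \mathcal{S}_c(X)$. Also $T_1 \subseteq S \subseteq \bigcup \mathcal{U}$, and for each $U \in \mathcal{U}$ with $U \neq U_\omega$ we have $T_1 \cap U = S \cap U \neq \emptyset$, whereas $T_1 \cap U_\omega = (S \cap U_\omega) \setminus \{s_N\}$ is still infinite. Hence $T_1 \in \langle \mathcal{U} \rangle_c \subseteq \mathcal{V}$, and $s_N \in S \setminus T_1$ shows $T_1 \neq S$.

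Finally, by the path connectedness of $\mathcal{V}$, choose a path $\alpha : [0,1] \to \mathcal{V} \subseteq \mathcal{S}_c(X)$ with $\alpha(0) = S$ and $\alpha(1) = T_1$. Applying Theorem~\ref{teor:trayec-gamma-enX} with $\mathcal{H}(X) = \mathcal{S}_c(X)$ (whose elements are compact and zero-dimensional) and $p = s_N \in \alpha(0)$, we obtain a path $\beta : [0,1] \to X$ with $\beta(0) = s_N$ and $\beta(t) \in \alpha(t)$ for all $t$. Then $\beta(1) \in T_1$, so $\beta(1) \neq s_N = \beta(0)$, and thus $\beta[[0,1]]$ is a non-degenerate pathwise connected subspace of $X$. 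By Theorem~\ref{lema:path-con-implica-arco-con}, $\beta[[0,1]]$ contains an arc from $\beta(0)$ to $\beta(1)$, which lies in $X$. The only mildly delicate point is the choice of $s_N$ inside $U_\omega$ (as opposed to a singleton $\mathcal{U}$-class), which ensures $T_1$ still meets every element of $\mathcal{U}$; once this is secured, the rest is a direct application of the lifting theorem and the path-implies-arc theorem.
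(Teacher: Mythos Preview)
Your proof is correct and follows essentially the same route as the paper: pick a pathwise connected neighborhood $\mathcal V$ of $S$, find a basic Vietoris set $\langle\mathcal U\rangle_c\subseteq\mathcal V$, remove one non-limit point of $S$ lying in the element of $\mathcal U$ that contains $\lim S$, connect the two sequences by a path in $\mathcal V$, and lift via Theorem~\ref{teor:trayec-gamma-enX} to obtain a non-constant path in $X$. One tiny quibble: the pointwise definition of local path connectedness only gives a pathwise connected \emph{neighborhood}, not necessarily an open one, but since you immediately shrink to $\langle\mathcal U\rangle_c\subseteq\mathcal V$ this has no effect on the argument.
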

  \begin{proof}
     By assumption, there exists  a pathwise connected neighborhood $\mathcal V$ of $S$ for some $S \in \mathcal \mathcal{S}_c(X)$; hence we may take a finite cellular family $\mathcal W$ such that $S \in \langle \mathcal W \rangle _c \subseteq \mathcal V$. Set $p = \lim S$ and let $W$ be the element of $ \mathcal W$ that contains $p$. Take $z \in (S \cap W) \setminus \{p\}$ and define $R = S \setminus \{z\}$. Since $S,R \in \langle \mathcal W \rangle _c \subseteq \mathcal V$ there exists a path $\alpha: [0,1] \to \mathcal V$ such that $\alpha (0) = S$ and $\alpha (1) = R$. According to Theorem~\ref{teor:trayec-gamma-enX} there exists a path $\gamma: [0,1] \to X$ such that $\gamma(0) = z$ and $\gamma(1) \in R \subseteq X \setminus \{z\} $. The result follows from Theorem~\ref{lema:path-con-implica-arco-con} applied to $\gamma[ [0,1]]$.
  \end{proof}

\begin{theorem} \label{teor:Sc(X)locpathconn-implicaXlocpathconcik}
  Let $X$ be   a space.  If $ \mathcal \mathcal{S}_c(X)$ is nonempty and locally pathwise connected, then so is $X$.  
\end{theorem}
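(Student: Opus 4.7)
The plan is to verify the criterion in Lemma~\ref{lema:LAC-equiv}(iii) for $X$: that every path component $K$ of every open $U \subseteq X$ is open. Fix such $U$ and $K$, take $p \in K$, and seek an open $V$ with $p \in V \subseteq K$. If $p$ is isolated in $X$, then $V = \{p\}$ suffices, so we may assume $p$ is not isolated.

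The first step is to produce a sequence $S \in \mathcal{S}_c(X)$ with $p \in S$ and $S \subseteq U$. Since $p$ need not a priori be the limit of any sequence in a general Hausdorff space, this step must use the hypotheses essentially: starting from an arbitrary $T \in \mathcal{S}_c(X)$, one perturbs $T$ through paths in $\mathcal{S}_c(X)$ (available by local path connectedness) and lifts these paths via Theorem~\ref{teor:trayec-gamma-enX} to produce convergent sequences in $X$ with prescribed limits, using the non-isolation of $p$ to steer the limit to $p$ while remaining inside $U$.

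Having constructed $S$, and arranging $\lim S = p$, I would invoke the local path connectedness of $\mathcal{S}_c(X)$ at $S$ to obtain a path connected open neighborhood $\mathcal{N}$ of $S$ inside $\langle \{U\} \rangle_c$, and set $V := \bigcup \mathcal{N}$. By Lemma~\ref{lema:union-da-abierto} the set $V$ is open in $X$, and plainly $p \in V \subseteq U$. To see $V \subseteq K$, take $q \in V$, choose $R \in \mathcal{N}$ containing $q$ (and, whenever $q$ is non-isolated in $X$, arranged so that $\lim R = q$), pick a path $\alpha \colon [0,1] \to \mathcal{N}$ with $\alpha(0) = S$ and $\alpha(1) = R$, and apply Theorem~\ref{teor:trayec-gamma-enX} to obtain a selection $\beta \colon [0,1] \to X$ with $\beta(0) = p$ and $\beta(t) \in \alpha(t) \subseteq U$. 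This yields a path in $U$ from $p$ to $\beta(1) \in R$.

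The main obstacle is identifying $\beta(1)$ with the prescribed $q$. The strategy is that, when $\lim R = q$, continuity of $\beta$ together with $\beta(0) = p = \lim \alpha(0)$ forces $\beta(t) = \lim \alpha(t)$ throughout, so that $\beta(1) = \lim R = q$. Making this rigorous requires two auxiliary facts: the continuity of the map $T \mapsto \lim T$ on $\mathcal{S}_c(X)$, and the uniqueness of a continuous selection $\beta$ starting at the distinguished limit point at time zero. The subordinate difficulty posed by isolated points of $X$ that happen to lie in $V$ can be handled by shrinking $\mathcal{N}$ to avoid sequences that contain them, which is possible because singletons of isolated points of $X$ are open and disjoint from $S$.
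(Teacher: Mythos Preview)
Your proposal has two genuine gaps, and the paper's proof avoids both by a different device.

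First, the uniqueness claim underlying your identification $\beta(1)=q$ is false. Even granting that $t\mapsto\lim\alpha(t)$ is continuous, a continuous selection $\beta$ with $\beta(0)=\lim\alpha(0)$ need not track the limit. For instance, in $X=[0,1]$ take $S=\{0\}\cup\{1/n:n\in\mathbb N\}$ and define $\alpha(t)=S$ for $t\le\tfrac12$ and $\alpha(t)=S\cup\{2t-1\}$ for $t\ge\tfrac12$. Then $\lim\alpha(t)=0$ for all $t$, yet $\beta(t)=0$ for $t\le\tfrac12$ and $\beta(t)=2t-1$ for $t\ge\tfrac12$ is a continuous selection with $\beta(0)=0$ and $\beta(1)=1\ne 0$. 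So there is no mechanism forcing $\beta(1)=q$ in your setup.

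Second, your first step---producing $S\in\mathcal S_c(X)$ with $\lim S=p$ and $S\subseteq U$---is not justified; in a general Hausdorff space a non-isolated point need not be the limit of any sequence, and the vague ``steer the limit to $p$'' sketch does not remedy this. The same difficulty reappears when you ask that $\lim R=q$ for an arbitrary $q\in V$.

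The paper sidesteps both problems by placing the point (call it $q$) as an \emph{isolated} point of $S$ rather than as $\lim S$: since $X$ contains an arc (Lemma~\ref{lema:loc-path-con-implicaarco}), one can take any $S\in\mathcal S_c(X)$ with $q\in S\setminus\{\lim S\}$. Then choose disjoint open sets $U_1,U_2$ with $q\in U_1\subseteq W$ and $S\setminus\{q\}\subseteq U_2$, take a pathwise connected neighborhood $\mathcal V$ of $S$ inside $\langle\{U_1,U_2\}\rangle_c$, and show that $U_1\cap\bigcup\mathcal V$ is a pathwise connected neighborhood of $q$. The point is that any lifted path $\gamma$ starting at $p\in U_1$ has connected image inside $U_1\cup U_2$, hence stays in $U_1$; since $\gamma(1)\in S$ and $S\cap U_1=\{q\}$, this forces $\gamma(1)=q$ without any uniqueness-of-selection argument.
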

   \begin{proof}
       Fix $q \in X$ and an open subset $W$ of $X$ that contains $q$.      
       Since $X$ contains an arc (Lemma~\ref{lema:loc-path-con-implicaarco}), we may take   $S \in \mathcal \mathcal{S}_c(X)$ such that $q \in S \setminus \{\lim S\}$.  Let $U_1$ and $U_2$ be disjoint open sets such that $q \in U_1 \subseteq W$ and $S \setminus \{q\} \subseteq U_2$.
       
       Since $S \in \langle \{U_1, U_2\} \rangle _c$,  there exists a pathwise connected  set $\mathcal V$  such that $S \in \int(\mathcal V )\subseteq \mathcal V \subseteq \langle \{U_1, U_2\} \rangle _c$.      It follows from Lemma~\ref{lema:union-da-abierto}   that $U _1 \cap (\bigcup \int(\mathcal V))$ is an open subset of $X$ and that  $q \in U _1 \cap (\bigcup \int(\mathcal V)) \subseteq  U _1 \cap (\bigcup \mathcal V) \subseteq W$. 
    
    It suffices to prove that each point of $U _1 \cap (\bigcup \mathcal V)$ may be connected with $q$ by a path contained in $U _1 \cap (\bigcup \mathcal V)$. To this end fix $p \in U _1 \cap (\bigcup \mathcal V)$ and pick $S_p \in   \mathcal V  $  such that $p  \in S_p $. Also, let $\alpha : [0,1] \to \mathcal V$ be a path such that $\alpha(0) =S_p$ and $\alpha(1) =S$. According to Theorem~\ref{teor:trayec-gamma-enX} there exists a path $\gamma: [0,1] \to \bigcup \alpha \big( [0,1]\big)$ such that $\gamma(0) =p$ and $\gamma(1) \in S$.
       Observe that  $ \gamma \big( [0,1]\big) \subseteq \bigcup \mathcal V \subseteq  U_1 \cup U_2$. The fact that $U_1 \cap U_2 = \emptyset$ and the connectedness of $\gamma \big( [0,1]\big)$ imply that $  \gamma \big( [0,1]\big) \subseteq U _1 \cap (\bigcup \mathcal V)$. Finally, since $\gamma(1) \in S\cap U _1$ then $\gamma(1) = q$. Therefore,  $U _1 \cap (\bigcup \mathcal V)$ is pathwise connected. 
   \end{proof}

Our next two characterizations follow from Corollary~\ref{corol:Xfirst-count-loc-pathcon-implicaXtb}, Theorem~\ref{teor:Sc(X)locpathconn-implicaXlocpathconcik}, 
Corollary~\ref{corol:Xfirst-count-path-loc-path} and Theorem~\ref{teor:Sc(X)-path-conn-implicaXtb}.

\begin{corollary} \label{corol:equiv-locpathcon}
   The following conditions are equivalent for a first countable space $X$ with nonempty hyperspace $\mathcal{S}_c(X)$:
   
   (i) $X$ is locally pathwise connected;
   
   (ii) $\mathcal \mathcal{S}_c(X)$ is  locally pathwise connected.
\end{corollary}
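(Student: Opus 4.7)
The plan is to deduce this equivalence immediately from two previously established results, one for each direction. To prove (i)$\Rightarrow$(ii), I would invoke Corollary~\ref{corol:Xfirst-count-loc-pathcon-implicaXtb}, which asserts that a first countable, locally pathwise connected space $X$ has a locally pathwise connected hyperspace $\mathcal{S}_c(X)$. Since the present statement assumes both hypotheses verbatim, the implication is automatic.

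For (ii)$\Rightarrow$(i), I would apply Theorem~\ref{teor:Sc(X)locpathconn-implicaXlocpathconcik}, which says that whenever $\mathcal{S}_c(X)$ is nonempty and locally pathwise connected, so is $X$. The non-emptiness of $\mathcal{S}_c(X)$ is part of the hypothesis of the corollary, so this also follows at once. Note that first countability is not even required for this direction; it is only used in the forward implication.

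Thus the corollary itself is a two-line deduction, and the real work has been done earlier. The principal obstacle was in proving those two underlying statements. For (i)$\Rightarrow$(ii) the hard step was Theorem~\ref{teor:local-con-lims-difs}, which required path-joining two sequences in $\langle \mathcal U \rangle_c$ whose limits lie in possibly distinct cells of a finite cellular family $\mathcal U$; that argument relied on Lemma~\ref{lema:los-Q_U-2} to construct a path whose limit jumps from one cell to another, together with Corollary~\ref{corol:loc-path-con-1onum} to handle sequences sharing a common limit. For (ii)$\Rightarrow$(i), the main difficulty had been to extract, from a pathwise connected neighborhood $\mathcal V$ of a sequence $S$ with $q \in S \setminus \{\lim S\}$, a pathwise connected open neighborhood of $q$ in $X$; this used Lemma~\ref{lema:union-da-abierto} to pass from open families of sequences to open subsets of $X$, together with Theorem~\ref{teor:trayec-gamma-enX} to lift a path in $\mathcal V$ to a path in $X$ starting at $q$, after first ensuring the existence of an arc in $X$ via Lemma~\ref{lema:loc-path-con-implicaarco}. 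With those pieces already in place, assembling the equivalence is purely a bookkeeping step.
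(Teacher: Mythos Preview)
Your proposal is correct and matches the paper's own approach: the paper simply states that the corollary follows from Corollary~\ref{corol:Xfirst-count-loc-pathcon-implicaXtb} and Theorem~\ref{teor:Sc(X)locpathconn-implicaXlocpathconcik}, exactly as you do. Your additional remarks about where the real work was done are accurate but go beyond what the paper records for this corollary.
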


\begin{corollary} \label{corol:equiv-entre-path-conn y local-path-conn-XySc(X)}
   The following conditions are equivalent for a  first countable space $X$,  with nonempty hyperspace $\mathcal{S}_c(X)$:
   
   (i) $X$ is pathwise connected and  locally pathwise connected;
   
   (ii) $\mathcal \mathcal{S}_c(X)$ is pathwise connected and  locally pathwise connected.
\end{corollary}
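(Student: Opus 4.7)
My plan is to derive both implications directly from the four results cited in the paragraph preceding the corollary, without requiring any new construction. The statement is essentially the conjunction of two equivalences that the paper has already established separately: one about path connectedness (Theorem~\ref{teor:Sc(X)-path-conn-implicaXtb} together with part of Corollary~\ref{corol:Xfirst-count-path-loc-path}) and one about local path connectedness (Corollary~\ref{corol:equiv-locpathcon}, itself built from Corollary~\ref{corol:Xfirst-count-loc-pathcon-implicaXtb} and Theorem~\ref{teor:Sc(X)locpathconn-implicaXlocpathconcik}).

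For the forward direction $(i)\Rightarrow(ii)$, I would simply invoke Corollary~\ref{corol:Xfirst-count-path-loc-path}: assuming $X$ is first countable, pathwise connected and locally pathwise connected, that corollary asserts exactly that $\mathcal{S}_c(X)$ inherits both properties. No extra step is needed, since first countability of $X$ is already part of the standing hypothesis.

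For the converse $(ii)\Rightarrow(i)$, I would split the two conclusions. Path connectedness of $X$ follows from Theorem~\ref{teor:Sc(X)-path-conn-implicaXtb} applied to the nonempty pathwise connected hyperspace $\mathcal{S}_c(X)$; note that first countability is not required for this implication. Local path connectedness of $X$ is precisely the content of Theorem~\ref{teor:Sc(X)locpathconn-implicaXlocpathconcik}, which requires only that $\mathcal{S}_c(X)$ be nonempty and locally pathwise connected. Combining these two gives $(i)$.

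There is no genuine obstacle here since the work has been done upstream; the only point to be careful about is to verify that the hypothesis of each cited result is available. The path-connectedness implications use the nonemptiness of $\mathcal{S}_c(X)$, which is assumed; the local path connectedness implication from $X$ to $\mathcal{S}_c(X)$ uses first countability of $X$, which is also assumed; and the reverse local path connectedness implication needs nothing beyond $\mathcal{S}_c(X)\neq\emptyset$. So the proof reduces to citing these four results in the appropriate combination, and would be a one-paragraph argument in the paper.
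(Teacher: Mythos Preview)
Your proposal is correct and matches the paper's approach exactly: the paper simply states that the corollary follows from Corollary~\ref{corol:Xfirst-count-loc-pathcon-implicaXtb}, Theorem~\ref{teor:Sc(X)locpathconn-implicaXlocpathconcik}, Corollary~\ref{corol:Xfirst-count-path-loc-path} and Theorem~\ref{teor:Sc(X)-path-conn-implicaXtb}, which is precisely the combination you describe.
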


\begin{question}
   Can the assumption on first countability be removed in  Corollary~\ref{corol:equiv-locpathcon} or Corollary~\ref{corol:equiv-entre-path-conn y local-path-conn-XySc(X)}?
\end{question}


\section{Contractibility}

  In this section we prove that $\mathcal S_c(X)$ is contractible, whenever $X$ is a nondegenerate connected subspace of a tree (Definition~\ref{defin:tree}).

In the proof of our next result we use the following notation: for a subset $S$ of $\mathbb R$ and a real number $t$ we denote by $tS$ the set $\{tx : x \in S\}$.

\begin{theorem} \label{teor:Sc(arco)-contr}
   If   $X$ is a space of the form $(a,b), (a,b]$ or $[a,b]$, then $\mathcal S_c ( X)$ is contractible.
\end{theorem}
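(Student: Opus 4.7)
The plan is to construct an explicit contraction $h\colon \mathcal{S}_c(X)\times [0,1]\to \mathcal{S}_c(X)$. Since $\mathcal{S}_c$ depends only on the homeomorphism type of $X$, an affine homeomorphism reduces the problem to one of the three canonical cases $X\in\{[0,1],(0,1],(0,1)\}$; the three arguments are parallel, so I focus on $X=[0,1]$. Fix the target $S_0=\{1/n:n\in\N\}\cup\{0\}\in\mathcal{S}_c(X)$.

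The natural candidate formula, suggested by the scaling notation $tS$ introduced just before the theorem, is
\[
h(S,t) \;=\; (1-t)\,S \;\cup\; \bigl((1-t)\lim S \;+\; t\,S_0\bigr),
\]
where the second summand denotes the translate of $tS_0$ by $(1-t)\lim S$. One checks $h(S,0)=S\cup\{\lim S\}=S$ (since $\lim S\in S$), $h(S,1)=\{0\}\cup S_0=S_0$, and $h(S,t)\subseteq[0,1]$ because $(1-t)\lim S+t\leq(1-t)+t=1$. More importantly, for each $t\in(0,1)$ both summands are nontrivial convergent sequences accumulating at the common point $(1-t)\lim S$, so $h(S,t)\in\mathcal{S}_c(X)$. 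In particular, the formula has all the right boundary and set-theoretic properties.

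The main obstacle is that $\lim\colon \mathcal{S}_c(X)\to X$ is \emph{not} continuous, so the formula fails to give a continuous homotopy as written. Concretely, $S=\{0,1\}\cup\{1/k:k\geq 2\}$ has $\lim S=0$, yet the sequences $S_n=\{0,1\}\cup\{1/k:2\leq k\leq n\}\cup\{1-1/m:m\geq n\}$ converge to $S$ in the Vietoris topology on $\mathcal{S}_c(X)$ while $\lim S_n=1$ for every $n$. Therefore the value of $\lim S$ can ``jump'' under arbitrarily small Vietoris perturbations, and the dependence $S\mapsto (1-t)\lim S + tS_0$ in the formula is discontinuous at such points.

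To circumvent this, my proposal is to prepend a preliminary ``normalization'' homotopy $h_0\colon \mathcal{S}_c(X)\times[0,1]\to \mathcal{S}_c(X)$ that deforms each $S$ continuously into a monotone sequence whose limit equals the continuous selector $\min S$ (or $\max S$), and then apply the scaling formula above with $\min$ replacing $\lim$. The normalization can be attempted via a continuous family of piecewise linear self-homeomorphisms of $X$ that ``fold'' the portion of $S$ lying above its limit downward in an order-preserving way. I expect this preliminary step to be the principal technical hurdle, precisely because it must handle, continuously in $S$, the sequences at which $\lim$ is genuinely discontinuous; the desired contraction is then obtained by concatenating $h_0$ with the corrected scaling homotopy.
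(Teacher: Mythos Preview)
Your proposal correctly locates the main obstruction—the discontinuity of $\lim\colon\mathcal{S}_c(X)\to X$—but does not overcome it. The ``normalization'' homotopy $h_0$ you sketch would ``fold the portion of $S$ lying above its limit downward''; but this recipe itself requires knowing $\lim S$, so the construction as described inherits exactly the discontinuity you were trying to avoid. In your own example the sequences $S_n\to S$ have $\lim S_n=1$ while $\lim S=0$: for each $S_n$ your fold would act on the portion of $S_n$ below $1$ (essentially all of $S_n$), whereas for $S$ it would act on the portion above $0$ (again essentially all of $S$)—two completely different operations that cannot be made to vary continuously across the limit. You acknowledge this step as the ``principal technical hurdle'' and leave it unresolved, so the argument is incomplete as it stands.

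The paper's proof sidesteps $\lim$ altogether. Rather than trying to control the limit point, it uses the genuinely continuous selector $M(S)=\max S$ to plant a fixed anchor: the homotopy $G(S,t)=S\cup\{(1-2t)M(S)+2t\}$ slides a new point from $\max S$ to $1$, deforming $\mathcal{S}_c(X)$ into the subspace $\mathcal{X}_1=\{S\in\mathcal{S}_c(X):1\in S\}$. On $\mathcal{X}_1$ the contraction is then $H(S,t)=tS\cup\{1,\tfrac12,\dots,\tfrac{1}{2^{n-1}}\}$ for $t\in[\tfrac{1}{2^n},\tfrac{1}{2^{n-1}}]$: as $t\to0$ the scaled copy $tS$ collapses toward $\{0\}$ while the appended dyadic points supply the infinite tail needed to stay in $\mathcal{S}_c(X)$. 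The condition $1\in S$ guarantees $\tfrac{1}{2^n}\in\tfrac{1}{2^n}S$, so the pieces glue, and nowhere does $\lim S$ enter. The moral for your approach: instead of attempting to normalize $\lim S$, arrange that a \emph{fixed} point (here $1$) belongs to every $S$; this is easy using $\max$, and once done the scaling idea goes through without any reference to the limit.
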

      \begin{proof}
         Without loss of generality (and for the sake of simplicity) we will assume that $-2 < a < 0 < 1 < b <2$. 
      
      \noindent \textbf{Claim.} The set $\mathcal X _1= \{ S \in \mathcal S_c( X) : 1 \in S\}$ is contractible.
      
\noindent   Consider the function $H: \mathcal X _1 \times [0,1] \to \mathcal X_1$ given by 
      \begin{equation*}  
          H(S,t) = \left\{ \begin{array}{lll}
               tS \cup \big\{ \frac{1}{2^{k-1}} : k \in \{1, \dots , n\} \big\},  & \textrm{if} & t \in 
                               [\frac{1}{2^n}, \frac{1}{2^{n-1}}] \textrm{ and } n \in \mathbb N; \\
               \{0\} \cup \big\{ \frac{1}{2^{k-1}} : k \in \mathbb N \big\},      & \textrm{if} & t =0.
          \end{array} \right.
       \end{equation*}
The fact that $H$ is well defined follows from recalling that $1 \in S$ for all $S \in \mathcal X _1$ and from noting that  $\frac{1}{2^n} \in \frac{1}{2^n}S$ for each $n \in \mathbb{N}$. Moreover, one can easily show that $H$ is continuous in $\mathcal X _1 \times (0,1]$. 

In order to prove that $H$ is continuous at the elements of $\mathcal X _1 \times \{0\}$, fix $S \in \mathcal X _1$ and let $\mathcal U$ be a finite cellular family such that $\{0\} \cup \big\{ \frac{1}{2^{k-1}} : k \in \mathbb N \big\} = H(S,0) \in \langle \mathcal U \rangle _c$. Let $U$ be the element of $\mathcal U$ that contains $0$ and pick $N \in \mathbb N$ such that $[\frac{-2}{2^N}, \frac{2}{2^N}] \subseteq U$.  
   Fix $(S_1,t) \in \mathcal X _1 \times \big[0, \frac{1}{2^{N}} \big)$.
   Observe that $tS_1 \subseteq \frac{1}{2^N}[a,b] \subseteq [\frac{-2}{2^N}, \frac{2}{2^N}] \subseteq U$. Thus, a straightforward argument gives $H(S_1, t) \in \langle \mathcal U \rangle _c$. This implies the continuity of $H$.  Finally, since $H(S,0) = \{0\} \cup \big\{ \frac{1}{2^{k-1}} : k \in \mathbb N \big\}$ and $H(S,1) =S$ for all $S \in \mathcal X _1$, the claim is proved.
 
 \vspace{.2cm}
 
Now define $M: \mathcal S_c( X) \to X$ by $M(S) = \max S$. Note that $M$ is continuous. 
Consider $G: \mathcal S_c( X) \times [0, \frac{1}{2}] \to \mathcal S_c( X)$ given by $G(S,t) = S \cup \{(1-2t)M(S) + 2t\}$, then $G$ is a homotopy such that  $G(S,0) = S$  and $G\big(S,\frac{1}{2} \big) \in \mathcal X _1$   for all $S \in \mathcal S_c(X)$.
Further, by the Claim we may take $S_0 \in \mathcal X _1$ and a homotopy $G_1: \mathcal X _1 \times [\frac{1}{2},1] \to \mathcal X _1$ such that $G_1 \big(S, \frac{1}{2} \big) = S$ and $G_1(S,1) = S_0$ for all $S \in \mathcal X _1$.     
  Define $G_2: \mathcal S_c(X) \times [0,1] \to \mathcal S_c(X)$ by    
      \begin{equation*}  
          G_2(S,t) = \left\{ \begin{array}{lll}
               G(S,t),     & \textrm{if} & t  \in [0, \frac{1}{2} ]; \\
               G_1 \big( G(S, \frac{1}{2}), t\big),     & \textrm{if} & t \in  [ \frac{1}{2} ,1].
          \end{array} \right.
       \end{equation*}
It follows easily that $G_2$ is a contraction in $\mathcal S_c(X)$.
   \end{proof}

  In the rest of this section we will use Notation~\ref{unique-arcs}.

\begin{definition} \label{defin:tree}
  By a \emph{tree} we mean a connected union of finitely many arcs, such  that it  does not contain simple closed curves.
\end{definition}

   Given a tree $X$ and $x \in X$, we say that $x$ is an \emph{end point} of $X$ provided that $x$ does not separate any arc in $X$ that contains it. Moreover, $x$ is a ramification point of $X$ if $x$ is the common end point of three arcs in $X$ that are otherwise disjoint.  The symbols $E(X)$ and $R(X)$ will denote the set of end points and the set of ramification points of $X$, respectively.  It is easy to see that both $E(X)$ and $R(X)$ are finite. 
   
Moreover, for each $p \in X$ we will consider the partial order given by:  $x <_p y$ provided that $x  \in [p,y] \setminus \{y\}$. Of course, $x \le _p y$ will mean that either $x <_p y$ or $x = y$.

\begin{theorem}
  If $X$ is a tree and $Y$ is a nondegenerate connected subset of $X$, then $\mathcal S_c(Y)$ is contractible.
\end{theorem}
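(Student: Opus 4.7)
My plan is to adapt the proof of Theorem~\ref{teor:Sc(arco)-contr} from intervals to this broader tree setting. Fix two distinct points $p, q \in Y$, which exist by nondegeneracy; because $Y$ is a connected subset of the uniquely arcwise connected tree $X$, every arc $[p, y]$ with $y \in Y$ lies in $Y$, and in particular $[p, q] \subseteq Y$. The central construction is a ``tree contraction'' $\phi : Y \times [0, 1] \to Y$ defined by letting $\phi(y, t)$ be the point at arc-length fraction $t$ along $[p, y]$ starting from $p$; it is continuous, with $\phi_0 \equiv p$ and $\phi_1 = \mathrm{id}_Y$, and plays the role of the scaling $y \mapsto ty$ used in the interval case. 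Setting $s_0 = q$, $s_n = \phi(q, 1/2^n)$ for $n \geq 1$, and $s_\omega = p$ gives a distinguished nontrivial convergent sequence $S^* = \{s_n : n \in \omega+1\} \subseteq [p,q]$ that will serve as the target of the contraction.

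The first main step is to contract the sub-hyperspace $\mathcal{Y}_q = \{S \in \mathcal{S}_c(Y) : q \in S\}$ onto $S^*$ by means of the direct analogue of the interval-case homotopy:
\[
H(S, t) \;=\; \phi_{1-t}(S) \,\cup\, \{s_0, \ldots, s_{n-1}\}
\qquad \text{for } t \in \bigl[\,1 - \tfrac{1}{2^{n-1}},\; 1 - \tfrac{1}{2^n}\,\bigr],
\]
extended by $H(S, 1) = S^*$. Then $H(S, 0) = \phi_1(S) \cup \{q\} = S$ (since $q \in S$); at each junction $t = 1 - 1/2^n$ the two adjacent formulas agree because $q \in S$ forces $s_n = \phi_{1/2^n}(q) \in \phi_{1/2^n}(S)$; continuity at $t = 1$ follows from Proposition~\ref{pro:UCXbase} since $\phi_{1-t}(S)$ concentrates near $p$ while the appended finite set exhausts the base sequence; and each image $\phi_{1-t}(S)$ remains a nontrivial convergent sequence because $\phi_{1-t}$ is at most finitely-to-one on $Y$ (as $X$ has finitely many branches), so $\phi_{1-t}(S)$ is countably infinite with limit $\phi_{1-t}(\lim S)$.

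To upgrade this to a contraction of all of $\mathcal{S}_c(Y)$ I would precompose with a homotopy $G : \mathcal{S}_c(Y) \times [0,1] \to \mathcal{S}_c(Y)$ satisfying $G(S, 0) = S$ and $G(S, 1) = S \cup \{q\} \in \mathcal{Y}_q$, and then concatenate. The obvious candidate $G(S, t) = S \cup \{ \alpha(\lim S, q, t) \}$, where $\alpha(z, q, \cdot)$ parametrizes $[z, q]$ from $z$ at $t = 0$ to $q$ at $t = 1$, would give continuity transparently if $\lim : \mathcal{S}_c(Y) \to Y$ were continuous---but this fails in general: one can build a Vietoris-convergent sequence of nontrivial convergent sequences whose individual limit points jump to an isolated point of the target. \textbf{This is the main obstacle.} My plan for overcoming it is to replace the single sliding new point by the simultaneous sliding of all of $S$ via the dual contraction $\psi(y, t)$ that moves each $y$ along $[y, q]$ toward $q$, injecting a growing finite portion of $S^*$ to suppress the second accumulation point that would otherwise appear. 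Calibrating the injection rate so that the union remains a nontrivial convergent sequence at every instant---and transitions continuously to $S \cup \{q\}$ at $t = 1$---is the technical step I expect to require the bulk of the work.
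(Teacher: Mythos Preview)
Your Step~1 is correct and already contains the essential ingredient: the map $\phi$ is a genuine arc-smooth contraction (it satisfies $\phi(\phi(y,s),t)=\phi(y,st)$, hence is strictly $<_p$-preserving on each ray), and your finitely-to-one observation shows that $\phi_{1-t}[S]\in\mathcal S_c(Y)$ for every $S\in\mathcal S_c(Y)$ and every $t<1$, not only for $S\in\mathcal Y_q$.

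The gap is Step~2. Your proposed fix cannot reach the stated target: if $G(S,t)=\psi_t[S]\cup F_t$ with $F_t\subseteq S^*$ finite, then $G(S,1)=\{q\}\cup F_1\subseteq S^*\cup\{q\}$, which is a fixed set independent of $S$, never $S\cup\{q\}$. More fundamentally, the two-stage architecture you are imitating from Theorem~\ref{teor:Sc(arco)-contr} hinges on the continuous selection $M(S)=\max S$; a tree has no analogous continuous selection of a single point of $S$ (neither $\lim S$ nor an arc-length-maximizer works), so there is no clean way to slide one new point from inside $S$ to $q$.

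The paper avoids this obstacle by reorganizing the argument rather than by solving Step~2. Choose $p\in(\int_X Y)\setminus R(X)$ so that $p$ has an arc neighborhood $V\subseteq Y$, and pick $t_0>0$ with $\phi_{t_0}[X]\subseteq V$. Define
\[
\widehat G(S,t)=
\begin{cases}
\mathcal K(\phi_t)(S)=\phi_t[S], & t\in[t_0,1],\\[2pt]
G\bigl(\phi_{t_0}[S],\,t\bigr), & t\in[0,t_0],
\end{cases}
\]
where $G$ is the contraction of $\mathcal S_c(V)$ supplied by Theorem~\ref{teor:Sc(arco)-contr} as a black box. Your own argument (finitely-to-one, plus the fact that some $S\cap[p,e]$ is infinite because $E(X)$ is finite) shows $\phi_t[S]\in\mathcal S_c(Y)$ for $t>0$, and at time $t_0$ everything lands in $\mathcal S_c(V)$. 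This is a contraction of the whole of $\mathcal S_c(Y)$ with no need to first arrange $q\in S$.

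In short: do not adapt the \emph{internal} two-stage structure of the interval proof, which depends on $\max$; instead use $\mathcal K(\phi)$ to push $\mathcal S_c(Y)$ into $\mathcal S_c$ of a sub-arc and invoke the interval case \emph{externally}. You already have every piece needed for this.
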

   \begin{proof}
       Observe that $Y$ is  pathwise connected.
Fix $p \in (\int_X Y) \setminus R(X)$ and let $H : X \times [0,1] \to X$ be a homotopy such that: (i)~$H(x,0) = p$, (ii) $H(x,1) =x$ and (iii)  $H \big( H(x,s),t \big) = H(x, st)$ for each $x \in X$ and $s,t \in [0,1]$ (see \cite[Theorem~I-2-E, p.~549]{Fugate-Gordh-Lum} and  \cite[Remark before Theorem~II-3-B, p.~558]{Fugate-Gordh-Lum}). Using this last condition one can show that 
      \begin{equation} \label{eq:alfa-e-es-homeo-previo}
         \textrm{ $ H(x,s) <_p  H(x,t)$ whenever $s <t$ and $x \in X \setminus \{p\}$. }
      \end{equation}
   Given $x_1, x_2 \in X$ such that $x_1 <_p x_2$, note that there exists $s \in [0,1)$   such that $x_1 = H(x_2,s)$. If $t > 0$, it follows that $H(x_1,t) = H \big( H(x_2,s),t \big) = H(x_2 , st) <_p H(x_2,  t ) $; thus 
      \begin{equation} \label{eq:alfa-e-es-homeo}
         \textrm{ if $x_1 <_p x_2$, \ then }   H(x_1,t) <_p H(x_2,t) \ \textrm{ for all } t \in (0,1].
      \end{equation}
    Let $V$ be a neighborhood of $p$ in $X$ such that $V$ is an arc contained in $Y$ and let $t_0 >0$ be such that $H \big[ X \times [0,t_0] \big] \subseteq V$. According to Theorem~\ref{teor:Sc(arco)-contr}, we may take $S_0 \in \mathcal S_c(V )$ and a homotopy $G: \mathcal S_c(V) \times [0,t_0] \to \mathcal S_c(V)$ such that $G(S,0) = S_0$ and $G(S, t_0) =S$ for all $S \in \mathcal S_c(V)$. Use Theorem~\ref{teor:func-ind-cont} to define $\widehat G: \mathcal S_c(Y) \times [0,1] \to \mathcal S_c(Y)$ by 
      \begin{equation*}  
         \widehat G(S,t) = \left\{ \begin{array}{lll}
             \mathcal K(H) \big(S \times \{t\} \big),      & \textrm{if} & t \in [t_0, 1] ; \\
             G\big[ \mathcal K(H) \big(S \times \{t_0\} \big), t \big]  , & \textrm{if} & t \in[0, t_0].
          \end{array} \right.
       \end{equation*}
The facts that $X$ is a tree, that $Y$ is  pathwise connected and (\ref{eq:alfa-e-es-homeo-previo}) guarantee that $\mathcal K(H) \big(S \times \{t\} \big) \subseteq Y$ for all $(S,t) \in \mathcal S_c(Y) \times [0,1]$. Next, in order to see that $\widehat G$ is well defined fix $S \in \mathcal S_c(Y)$ and let $\{y_n : n \in \omega +1  \}$ be an adequate enumeration of $S$. 
Since $H(y_n,t)$  converges to $H(y_\omega,t)$ for each $t \in [0,1]$, we infer that
   \begin{equation} \label{eq:la-homot-llevaafinitoobiensuces}
     \mathcal K(H) \big(S \times \{t\} \big) =   H \big[S \times \{t\} \big] \in \mathcal F(Y) \cup \mathcal S_c(Y) \textrm{ for each } t \in [0,1].
    \end{equation}
The set $E(X)$ is finite, so we may fix $e \in E(X)$ such that $S \cap [p,e]$ is infinite. Then (\ref{eq:alfa-e-es-homeo}) implies that $H\big[(S \cap [p,e]) \times \{t\} \big]$ is infinite as well, for each $t >0$. Thus (\ref{eq:la-homot-llevaafinitoobiensuces}) yields $\mathcal K(H) \big(S \times \{t\} \big) \in \mathcal S_c(Y)$ for each $t >0$ and, therefore, $\widehat G$ is well defined. Finally, using Theorem~\ref{teor:func-ind-cont} it is easy to see that $\widehat G$ is a homotopy such that $\widehat G(S,0) = S_0$ and $\widehat G(S,1) = S$ for each $S \in \mathcal S_c(Y)$.
   \end{proof}  
   
The following question arises naturally.   
   
\begin{question}
  Let $X$ be a dendrite. Is $\mathcal S_c(X)$ contractible?
\end{question}

   Let $A_0 = [0,1] \times\{0\}$ and for each $n \in \mathbb{N}$ let $A_n$ be the segment in the plane that joins $(0,0)$ and $(1, \frac{1}{n})$. Define $X = \bigcup \{ A_n : n \in \omega \}$. Any space homeomorphic to $X$ is known as a \emph{harmonic fan}.

\begin{question}
  Let $X$ be a harmonic fan. Is $\mathcal S_c(X)$ contractible?
\end{question}

More generally:

\begin{question}
  Let $X$ be the cone over a space $Y$. Is $\mathcal S_c(X)$ contractible?
\end{question}

\begin{question}
  Let $X$ be a simple closed curve. Is $\mathcal S_c(X)$ contractible?
\end{question}

As a consequence of Corollary~\ref{corolit875iii}, Theorem~\ref{theo:Ri} and the fact that each contractible space is pathwise connected, we have the following two results (cf. \cite[Corollary~4]{CharatonikW1986}).

\begin{corollary}
Let $X$ be a uniquely pathwise connected continuum $X$. If $\mathcal{S}_c(X)$ is contractible, then $X$ is uniformly pathwise connected.
\end{corollary}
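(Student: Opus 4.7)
The plan is to observe that this corollary is essentially a direct consequence of Corollary~\ref{corolit875iii}, once one unpacks the hypothesis of contractibility. The key ingredient we need to supply, on top of the results already proved in the paper, is the elementary fact that every contractible space is pathwise connected: if $h\colon Z\times [0,1]\to Z$ is a contraction with $h(z,1)=p$ for all $z\in Z$, then for any fixed $z_0\in Z$ the assignment $t\mapsto h(z_0,t)$ is a path from $p$ to $z_0$, so $Z$ is pathwise connected.

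First I would apply this observation to $Z=\mathcal S_c(X)$: the hypothesis that $\mathcal S_c(X)$ is contractible immediately yields that $\mathcal S_c(X)$ is pathwise connected. Since $X$ is a continuum, and in particular a metric space, $\mathcal S_c(X)$ is Hausdorff (as a subspace of $\mathcal{K}(X)$), so by Theorem~\ref{lema:path-con-implica-arco-con} one in fact obtains that $\mathcal S_c(X)$ is arcwise connected.

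Second, I would note that ``uniquely pathwise connected'' and ``uniquely arcwise connected'' agree for continua: in a Hausdorff space any path joining two distinct points $x,y$ contains an arc from $x$ to $y$, and in a uniquely arcwise connected continuum there are no simple closed curves, so the image of any path between $x$ and $y$ coincides with the unique arc $[x,y]$. Hence $X$ is a uniquely arcwise connected continuum in the sense of Corollary~\ref{corolit875iii}. Applying that corollary directly gives that $X$ is uniformly arcwise connected, which (again using the identification of paths with arcs in the Hausdorff setting) is exactly the conclusion that $X$ is uniformly pathwise connected.

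There is no substantive obstacle in this argument: all the heavy lifting has been done in Theorem~\ref{theoit875i} and its Corollary~\ref{corolit875iii}. The only points deserving a moment's care are the translation between ``pathwise'' and ``arcwise'' terminology, which is handled by Theorem~\ref{lema:path-con-implica-arco-con}, and the extraction of path connectedness from contractibility, which is immediate. I would expect the author's proof to be a short one- or two-sentence appeal of exactly this form, possibly combined in the same paragraph with the parallel deduction (using Theorem~\ref{theo:Ri}) that $X$ contains no $R^i$-sets, which is the content of the companion corollary mentioned in the lead-in sentence.
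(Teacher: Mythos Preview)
Your proposal is correct and matches the paper's own argument, which is simply the one-sentence observation preceding the corollary: contractibility of $\mathcal S_c(X)$ gives path connectedness, and then Corollary~\ref{corolit875iii} applies. Your side remark that ``the image of any path between $x$ and $y$ coincides with the unique arc $[x,y]$'' is not literally true (a path can wander), but this is irrelevant since the paper is simply using ``pathwise'' and ``arcwise'' interchangeably via Theorem~\ref{lema:path-con-implica-arco-con}, and no finer analysis is needed.
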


\begin{corollary} \label{corol:R3-set}
If a continuum $X$ contains an $R^i$-set for some $i \in \{1,2,3\}$, then $\mathcal{S}_c(X)$ is not contractible.
\end{corollary}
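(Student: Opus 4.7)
The plan is to deduce the corollary as an immediate consequence of Theorem~\ref{theo:Ri} by contrapositive, using the elementary observation that contractibility implies path connectedness. This is precisely the strategy hinted at in the sentence preceding the corollary, which explicitly cites Theorem~\ref{theo:Ri} together with the fact that contractible spaces are pathwise connected.

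More precisely, I would proceed by contradiction: suppose $X$ contains an $R^i$-set for some $i \in \{1,2,3\}$ and, against the conclusion, assume $\mathcal{S}_c(X)$ is contractible. Fixing a contraction $h : \mathcal{S}_c(X) \times [0,1] \to \mathcal{S}_c(X)$ with $h(S,1) = S_0$ for some fixed $S_0 \in \mathcal{S}_c(X)$ and $h(S,0) = S$ for every $S$, we see that for each $S \in \mathcal{S}_c(X)$ the map $t \mapsto h(S,t)$ is a path in $\mathcal{S}_c(X)$ from $S$ to $S_0$. Concatenating such a path for $S$ with the reverse of the corresponding path for any other $R \in \mathcal{S}_c(X)$ yields a path in $\mathcal{S}_c(X)$ from $S$ to $R$, so $\mathcal{S}_c(X)$ is pathwise connected. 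Applying Theorem~\ref{theo:Ri}, we conclude that $X$ contains no $R^i$-set for any $i \in \{1,2,3\}$, contradicting the standing hypothesis.

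There is essentially no obstacle here: the whole content of the corollary resides in Theorem~\ref{theo:Ri}, and the only extra ingredient is the textbook fact that every contractible $T_2$ space is pathwise connected. Consequently, the write-up should be kept to a two- or three-line paragraph that cites Theorem~\ref{theo:Ri} and notes the contrapositive formulation after invoking path connectedness of contractible spaces.
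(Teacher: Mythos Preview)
Your proposal is correct and matches the paper's own argument: the corollary is stated immediately after the sentence invoking Theorem~\ref{theo:Ri} together with the fact that contractible spaces are pathwise connected, and that is exactly the deduction you give. No further justification is needed.
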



\section{Whitney blocks and Whitney levels}

Let $X$ be a compactum (i.e. a compact metrizable space). A Whitney map is a continuous function $w\colon \mathcal{CL}(X)\to [0, \infty)$ that satisfies the following conditions:
\begin{enumerate}
	\item for any $A,B\in \mathcal{CL}(X)$ such that $A\subsetneq B$, $w(A)<w(B)$;
	\item $w(A)=0$ if and only if $A\in \mathcal{F}_1(X)$.
\end{enumerate}
Note that the compactness of $\mathcal{CL}(X)$ allows us to assume that $w$ takes values in the interval $[0,1]$.

\begin{theorem} \cite[Theorem~13.4]{Illanes-Nadler}
	If $X$ is a compactum, then there is a Whitney map for $\mathcal{CL}(X)$.
\end{theorem}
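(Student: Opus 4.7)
The plan is to construct $w$ explicitly as a weighted sum of \emph{oscillation} functions associated to a dense family of continuous real-valued test functions on $X$. The first step is to fix a compatible metric on $X$ and exploit the separability of the Banach space $C(X,[0,1])$ (which holds because $X$ is a compact metric space) to obtain a countable dense subset $\{f_n : n \in \mathbb{N}\}$. For each $n$, define $\mu_n : \mathcal{CL}(X) \to [0,1]$ by
\[ \mu_n(A) = \max f_n[A] - \min f_n[A], \]
which is well defined because each $A \in \mathcal{CL}(X)$ is compact (as $X$ is a compactum, so $\mathcal{CL}(X)=\mathcal{K}(X)$). Then put
\[ w(A) = \sum_{n=1}^\infty \frac{\mu_n(A)}{2^n}. \]

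The continuity of each $\mu_n$ should follow from Theorem~\ref{teor:func-ind-cont} applied to $f_n$, together with the fact that $\max$ and $\min$ are continuous real-valued functions on $\mathcal{K}([0,1])$ endowed with the Vietoris (equivalently, Hausdorff) topology. Since $0 \le \mu_n \le 1$, the series defining $w$ converges uniformly, and hence $w$ itself is continuous on $\mathcal{CL}(X)$.

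The verification of axiom~(2) is routine: if $w(A)=0$, then every $f_n$ is constant on $A$, and density of $\{f_n\}$ in $C(X,[0,1])$ promotes this to \emph{every} continuous $[0,1]$-valued function on $X$ being constant on $A$; by Urysohn's lemma, $A$ cannot contain two distinct points, so $A \in \mathcal{F}_1(X)$. The converse is immediate. For axiom~(1), the weak inequality $w(A) \le w(B)$ when $A \subseteq B$ is immediate from $f_n[A] \subseteq f_n[B]$. The main obstacle is upgrading this to strict inequality, which requires exhibiting at least one index $n$ with $\mu_n(A) < \mu_n(B)$. My plan for this is, given $b \in B \setminus A$, to invoke Urysohn to produce $f \in C(X,[0,1])$ with $f|_A \equiv 0$ and $f(b)=1$, and then to approximate $f$ in the sup norm by some $f_n$ with $\|f_n - f\|_\infty < 1/4$; a short direct computation then yields $\mu_n(A) < 1/2 < \mu_n(B)$. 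This is the only place where the three ingredients—separability of $C(X,[0,1])$, Urysohn's lemma, and the particular functional form of the weighted series—have to combine, and so it is where I expect the real work of the proof to concentrate.
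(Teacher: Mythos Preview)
Your construction is correct and is in fact one of the classical proofs of this existence theorem. However, the paper itself does not supply a proof of this statement at all: it is simply quoted as \cite[Theorem~13.4]{Illanes-Nadler} and used as a black box, so there is no ``paper's own proof'' to compare against. (Incidentally, when the paper does need an explicit Whitney map later, in Theorem~\ref{NivelNoConexo}, it recalls a \emph{different} construction from \cite[13.5]{Illanes-Nadler}, based on $\lambda_n(K)=\min\{d(a_i,a_j):i\ne j\}$ over $n$-point subsets, rather than your oscillation-of-test-functions approach; both are standard.)

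One small remark on your write-up: in the strict-monotonicity step your bounds are actually a bit better than you state. With $\|f_n-f\|_\infty<\tfrac14$, $f|_A\equiv 0$, and $f(b)=1$, you get $f_n[A]\subseteq[0,\tfrac14)$, hence $\mu_n(A)<\tfrac14$, while $\max f_n[B]\ge f_n(b)>\tfrac34$ and $\min f_n[B]\le\min f_n[A]<\tfrac14$, so $\mu_n(B)>\tfrac12$. The conclusion $\mu_n(A)<\mu_n(B)$ follows exactly as you intended.
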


Given a Whitney map $w\colon \mathcal{CL}(X)\to [0, 1]$, a \emph{Whitney level for $\mathcal S_c(X)$} is a set of the form $(w|_{\mathcal{S}_c(X)})^{-1}[\{t\}]$ and a \emph{Whitney block for $\mathcal S_c(X)$} is a set of the form $(w|_{\mathcal{S}_c(X)})^{-1}[[r,t]]$, where $0 \le r < t \le 1$.

In this section we study the connectedness of Whitney blocks and Whitney levels for $\mathcal{S}_c(X)$. 

\subsection{An auxiliary result.}

The short result in this subsection will be used in Proposition~\ref{prop:whit-omega+1}. As a by-product  we obtain a model for the hyperspace $\mathcal{S}_c(\omega+1)$.

\begin{proposition} \label{prop:modelo}
	Let $\mathcal{C}=\{0,1\}^{\omega}$ be the Cantor set. If $\mathcal{C}_{\omega}$ denotes the set $\{ (a_i)_{i\in\omega} \in \mathcal{C} : a_i=1\ \text{for infinitely many }i\}$ and if  $\varphi\colon\mathcal{C}_{\omega}\to \mathcal{S}_c(\omega+1)$ is defined, for each $(a_i)_{i\in\omega}\in \mathcal{C}_{\omega}$, by $\varphi((a_i)_{i\in\omega})=\{i : a_i=1\}\cup\{\omega\},$ then $\varphi$ is a homeomorphism.
\end{proposition}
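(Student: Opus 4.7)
My plan is to verify that $\varphi$ is a well-defined bijection and then that $\varphi$ and $\varphi^{-1}$ are both continuous by matching Vietoris basic opens in $\mathcal{S}_c(\omega+1)$ with cylinder basic opens of the Cantor set. The entire argument is driven by the single structural fact that $\omega$ is the unique non-isolated point of $\omega+1$.

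For the bijection step: if $(a_i)_{i\in\omega}\in\mathcal{C}_\omega$ then $\{i:a_i=1\}$ is an infinite subset of $\omega$, and because each $n\in\omega$ is isolated in $\omega+1$ its only possible limit is $\omega$, so $\varphi((a_i))\in\mathcal{S}_c(\omega+1)$ with limit point $\omega$. Injectivity is immediate, and for surjectivity any $S\in\mathcal{S}_c(\omega+1)$ must have $\omega$ as its limit (every other point being isolated), so $\omega\in S$ and $S\setminus\{\omega\}$ is an infinite subset of $\omega$, producing the obvious preimage in $\mathcal{C}_\omega$.

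For continuity of $\varphi$, I would invoke Corollary~\ref{corol:UCXbase} (applicable since $\omega+1$ and each of its compact subsets are zero-dimensional) to take a base for $\mathcal{S}_c(\omega+1)$ of the form $\{\langle\mathcal{V}\rangle_c:\mathcal{V}\in\mathfrak{C}(\omega+1)\}$. For any such cellular $\mathcal{V}$ exactly one element $V_\omega$ contains $\omega$, and its openness forces $V_\omega=\{\omega\}\cup(\omega\setminus F)$ for some finite $F\subseteq\omega$, while the remaining members of $\mathcal{V}$ are contained in $F$. Hence whether $\varphi((b_i))\in\langle\mathcal{V}\rangle_c$ holds is determined solely by the finitely many coordinates $(b_i)_{i\in F}$, so the preimage under $\varphi$ is a finite union of cylinders and is therefore open in $\mathcal{C}_\omega$.

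For continuity of $\varphi^{-1}$, a basic open of $\mathcal{C}_\omega$ has the form $U_{F,s}\cap\mathcal{C}_\omega$ where $F\subseteq\omega$ is finite, $s:F\to\{0,1\}$, and $U_{F,s}=\{(b_i)\in\mathcal{C}:b_i=s_i$ for $i\in F\}$; setting $T=\{j\in F:s_j=1\}$ and $\mathcal{V}=\{\{j\}:j\in T\}\cup\{\{\omega\}\cup(\omega\setminus F)\}$, which is a finite cellular family in $\omega+1$, I would verify that $\varphi[U_{F,s}\cap\mathcal{C}_\omega]=\langle\mathcal{V}\rangle_c$: the containment $S\subseteq\bigcup\mathcal{V}$ encodes $S\cap(F\setminus T)=\emptyset$, the conditions $S\cap\{j\}\neq\emptyset$ for $j\in T$ encode $T\subseteq S$, and $S\cap(\{\omega\}\cup(\omega\setminus F))\neq\emptyset$ is automatic since $\omega\in S$. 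This shows $\varphi$ is open and hence a homeomorphism. The only portion requiring care is the description of finite cellular families of $\omega+1$ and the translation between Vietoris opens and Cantor-set cylinders, which is routine bookkeeping made possible by the single-limit-point structure; there is no genuine obstacle.
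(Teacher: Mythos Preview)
Your proof is correct and follows essentially the same approach as the paper: verify bijectivity, then check continuity of $\varphi$ and openness of $\varphi$ by translating between Vietoris basic opens of $\mathcal{S}_c(\omega+1)$ and cylinder basic opens of $\mathcal{C}_\omega$. The only minor difference is that you work with cellular-family basic opens (via Corollary~\ref{corol:UCXbase}) and describe the preimages and images globally as explicit basic opens, whereas the paper argues pointwise (finding, for each point, a basic neighborhood contained in the relevant set); this is a cosmetic distinction and both arguments rest on the same structural observation that $\omega$ is the unique non-isolated point of $\omega+1$.
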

  \begin{proof}
    	It is not difficult to see that $\varphi$ is a bijection. We show that $\varphi$ is continuous. 
	Fix $\mathcal{U} \in [\tau_{\omega +1}]^{<\omega}$.  Let $(a_i)_{i\in\omega}\in \varphi^{-1}[\langle \mathcal{U}\rangle_c]$, i.e.,   $\{i : a_i=1\}\cup\{\omega\}\in \langle \mathcal{U}\rangle_c$. 
	Let $U \in \mathcal U$ be such that $\omega \in U$. Hence, there exists  $n_0\in\omega$ such that $i\in U$ for each $i\geq n_0$. Define 
	 $$W=\{(b_i)_{i\in\omega}\in \mathcal{C}_{\omega} : b_i=a_i, \text{ for each }i\in\{0,...,n_0\}\}.$$ Notice  that $W$ is open in $\mathcal{C}_{\omega}$ and that $(a_i)_{i\in\omega}\in W  \subseteq \varphi^{-1}[\langle \mathcal{U}\rangle_c]$. Thus, $\varphi$ is continuous.
	
Now, we prove that $\varphi$ is open. For $n \in \mathbb N$ and $c_0,...,c_n \in \{0,1\}$ set
     $$U(c_0,...,c_n)=\{(b_i)_{i\in\omega}\in\mathcal{C}_{\omega} : b_i=c_i, \text{ for each }i\in\{0,...,n\}\}.$$
     Let $R\in \varphi[U(c_0,...,c_n)]$. We denote by $\{l_0,...,l_j\}=\{m : m\in R\cap\{0,...,n\}\}$. Let $V_i=\{l_i\}$ for each $i\in\{0,...,j\}$ and $V_{j+1}=\{m : m\geq n+1\}\cup\{\omega\}$, which are open subsets of $\omega +1$. 
      If $c_i = 0$ for each $i$, then we obtain directly that $R \in (V_{j+1})^+_c \subseteq \varphi[U(c_0,...,c_n)]$. Otherwise, let $\mathcal{V}=\{V_0,...,V_{j+1}\}$ and observe that $R\in \langle \mathcal{V}\rangle_c\subseteq \varphi[U(c_0,...,c_n)]$. Hence, $\varphi$ is open. 
  \end{proof}

\begin{remark}
Denote by $\mathbb I$ the space of the irrational numbers.
Recall that if $X$ is a nonempty Polish space (i.e. separable and completely metrizable), which is zero-dimensional and whose compact subspaces have all empty interior, then it is homeomorphic to $\mathbb I$ (\cite[Theorem~7.7]{Kechris}). Therefore,  Proposition~\ref{prop:modelo} implies that $\mathcal{S}_c(\omega+1)$ is homeomorphic to $\mathbb I$. This  was shown independently  in \cite[Theorem~2.2]{GarciaCP}, but the argument presented above is simpler.
\end{remark}

\subsection{Main results.}

\begin{theorem}\label{teo8493}
	If $X$ is a continuum and $w\colon \mathcal{CL}(X)\to [0,1]$ is a Whitney map, then $(w|_{\mathcal{S}_c(X)})^{-1}[\{t\}]\neq\emptyset$, for each $t\in (0,1)$.
\end{theorem}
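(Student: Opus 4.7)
The plan is to use an intermediate value argument. For each $k\in\mathbb{N}$ I will construct a continuous map from a connected space into $\mathcal{S}_c(X)$ whose composition with $w$ has image equal to the interval $[w(T_k),1)$, for suitable tail sequences $T_k$ with $w(T_k)\to 0$; the union over $k$ then covers $(0,1)$.

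Since $X$ is a nondegenerate continuum (hence first countable and uncountable), I fix $p\in X$ and a sequence $(p_n)_{n\in\mathbb{N}}$ in $X\setminus\{p\}$ of distinct points converging to $p$. For each $k\in\mathbb{N}$, set $T_k=\{p\}\cup\{p_n:n\ge k\}\in\mathcal{S}_c(X)$; since $T_k\to\{p\}$ in $\mathcal{CL}(X)$, continuity of $w$ yields $w(T_k)\to 0$. Next, consider $\mathcal{F}^p(X)=\{F\in\mathcal{F}(X):p\in F\}$. This is connected: it is the continuous image of $\mathcal{F}(X)$ under $F\mapsto F\cup\{p\}$ (continuous by Lemma~\ref{lema:union-finita-cont}), and $\mathcal{F}(X)$ is connected because each $\mathcal{F}_n(X)$ is the continuous image of the connected space $X^n$ (via the map $(x_1,\dots,x_n)\mapsto\{x_1,\dots,x_n\}$) and all $\mathcal{F}_n(X)$ contain the connected subset $\mathcal{F}_1(X)$.

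For each $k$, I define $\Psi_k\colon\mathcal{F}^p(X)\to\mathcal{S}_c(X)$ by $\Psi_k(F)=F\cup T_k$. This is continuous by Lemma~\ref{lema:union-finita-cont} and lands in $\mathcal{S}_c(X)$ because a union of a finite set with a nontrivial convergent sequence is again a nontrivial convergent sequence. Thus $w[\Psi_k[\mathcal{F}^p(X)]]$ is a connected subset of $(0,1)$, i.e., an interval. Its minimum $w(T_k)$ is attained at $F=\{p\}$, and its supremum is $1$: taking $F$ to be a finite $\delta$-net in $X$ containing $p$ makes $F\cup T_k$ Vietoris-close to $X$, so $w(F\cup T_k)$ can be made arbitrarily close to $w(X)=1$; this value is never attained since $X$ is uncountable while $F\cup T_k$ is countable. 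Therefore the image equals $[w(T_k),1)$, and since $w(T_k)\to 0$ we conclude $\bigcup_k[w(T_k),1)=(0,1)$, which yields the theorem. The step that I expect to require the most care is this identification of the image as exactly $[w(T_k),1)$, which depends on the approximation of $X$ by finite nets (to force the supremum up to $1$) together with the uncountability of $X$ (to rule out its attainment).
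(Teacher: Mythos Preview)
Your proof is correct, but it takes a different route from the paper's. The paper dispatches the result in two lines by citing that $\mathcal{S}_c(X)$ is connected (from \cite[Theorem~4.2]{MayaGScX}) and dense in $\mathcal{CL}(X)$, so that $w[\mathcal{S}_c(X)]$ is a connected dense subset of $[0,1]$ and hence contains $(0,1)$. You instead avoid invoking the global connectedness of $\mathcal{S}_c(X)$: for each $k$ you build an explicit connected subset $\Psi_k[\mathcal{F}^p(X)]\subseteq\mathcal{S}_c(X)$ and apply the intermediate value theorem to $w$ on that piece, then let $k\to\infty$ to sweep out all of $(0,1)$. This makes your argument essentially self-contained---it does not rely on the external connectedness theorem---at the cost of being longer; in effect you are reproving, in a localized form tailored to the present statement, the very ingredient the paper quotes. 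Both arguments tacitly use the normalization $w(X)=1$ (implicit in the codomain $[0,1]$), without which the conclusion fails for $t>w(X)$.
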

\begin{proof}
	Since $X$ is a continuum, it is clear that $\mathcal{S}_c(X)$ is dense in $\mathcal{CL}(X)$. Furthermore, $\mathcal{S}_c(X)$ is connected, by \cite[Theorem~4.2]{MayaGScX}. Therefore, $w(\mathcal{S}_c(X))$ is  connected and dense in $[0,1]$.
\end{proof}

In the following proposition we prove that there exists a Whitney map  for \mbox{$\mathcal{S}_c(\omega+1)$} whose image is $(0,1]$ (cf. Theorem~\ref{teo8493}).

\begin{proposition} \label{prop:whit-omega+1}
	There exists a Whitney map $\mu\colon \mathcal{CL}(\omega+1)\to [0,1]$ such that $\mu |_{S_c(\omega+1)}\colon S_c(\omega+1)\to (0,1]$ is a bijection.
\end{proposition}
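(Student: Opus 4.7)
The plan is to exhibit $\mu$ by an explicit formula that reproduces, on $\mathcal{S}_c(\omega+1)$, the binary-expansion bijection suggested by Proposition~\ref{prop:modelo}. I would assign weights $w_i=2^{-i-1}$ for $i\in\omega$ and $w_\omega=0$, and define
\[
\mu(B)\ =\ \sum_{i\in B}w_i\ -\ \min_{i\in B}w_i \qquad\text{for each } B\in\mathcal{CL}(\omega+1).
\]
Unwinding the definition, $\mu(B)=\sum_{i\in B\cap\omega}2^{-i-1}$ whenever $\omega\in B$, while $\mu(B)=\sum_{i\in B}2^{-i-1}-2^{-\max B-1}$ when $B\subseteq\omega$ is finite. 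It is immediate that $0\le\mu(B)\le\sum_{i\in\omega}2^{-i-1}=1$ and that $\mu$ vanishes exactly on singletons of $\omega+1$.

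For the bijection, I would observe that every $S\in\mathcal{S}_c(\omega+1)$ contains $\omega$, so $\mu(S)=\sum_{i\in S\cap\omega}2^{-i-1}$. In the notation of Proposition~\ref{prop:modelo}, if $S=\varphi((a_i)_{i\in\omega})$ with $(a_i)\in\mathcal{C}_\omega$, then $\mu(S)=\sum_{i\in\omega}a_i 2^{-i-1}$, i.e.\ the standard binary-expansion map. The classical fact that every real in $(0,1]$ admits a unique binary expansion with infinitely many $1$'s then gives that $\mu|_{\mathcal{S}_c(\omega+1)}$ is a bijection onto $(0,1]$. Strict monotonicity would be verified by a short case analysis on $A\subsetneq B$; the only slightly delicate subcase is $\omega\notin B$ with $\max A<\max B$, where a direct calculation yields
\[
\mu(B)-\mu(A)\ =\ 2^{-\max A-1}+\sum_{i\in(B\setminus A)\setminus\{\max B\}}2^{-i-1}\ >\ 0.
\]

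The main obstacle is continuity. Each indicator $B\mapsto\chi_B(i)$ with $i\in\omega$ is continuous, since $\{B:i\in B\}=\langle\{\omega+1,\{i\}\}\rangle$ is clopen in $\mathcal{CL}(\omega+1)$; hence $B\mapsto\sum_{i\in B\cap\omega}2^{-i-1}$ is continuous as a uniform limit of continuous functions (the tail is bounded by $2^{-N}$, independently of $B$). The subtle part is the continuity of $m(B):=\min_{i\in B}w_i$ at sets $B$ containing $\omega$, where $m(B)=0$. A Vietoris-convergent sequence $B_n\to B$ may have terms $B_n$ that are finite and miss $\omega$; but in that case, for each $N$ the Vietoris convergence forces $B_n$ to intersect the open neighborhood $\{\omega\}\cup\{n\in\omega:n\ge N\}$ of $\omega$ eventually, so $\max B_n\ge N$ eventually, and hence $m(B_n)=2^{-\max B_n-1}\to 0=m(B)$. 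Finite subsets of $\omega$ are isolated points of $\mathcal{CL}(\omega+1)$, so continuity there is automatic; an analogous argument handles $B=\{\omega\}\cup F$ for finite $F\subseteq\omega$.
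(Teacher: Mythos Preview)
Your proof is correct. The explicit formula $\mu(B)=\sum_{i\in B}w_i-\min_{i\in B}w_i$ works: the case analysis for strict monotonicity is complete (note that every element of $\mathcal{CL}(\omega+1)$ either contains $\omega$ or is a finite subset of $\omega$, so your two cases exhaust everything), the continuity argument for $m(B)$ is sound, and the bijection follows from the uniqueness of infinite binary expansions exactly as you say. The last sentence about $B=\{\omega\}\cup F$ is redundant, since your Vietoris argument already covers every $B$ containing $\omega$ without distinguishing whether $B\cap\omega$ is finite or infinite.

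The paper proceeds differently: it first defines the map only on $\mathcal{S}_c(\omega+1)$ via $v=f\circ\varphi^{-1}$ (where $f$ is the binary-expansion map on the Cantor set and $\varphi$ is the homeomorphism of Proposition~\ref{prop:modelo}), then extends $v$ continuously to the closure $\mathcal{H}=\mathcal{S}_c(\omega+1)\cup\{A\in\mathcal{F}(\omega+1):\omega\in A\}$, and finally invokes a general extension theorem for Whitney maps \cite[Theorem~16.10]{Illanes-Nadler} to get a Whitney map on all of $\mathcal{CL}(\omega+1)$. Your approach is more elementary and self-contained: by writing down a closed-form $\mu$ on the whole hyperspace you avoid the black-box extension result, at the cost of having to verify monotonicity and continuity on the finite sets not containing $\omega$ by hand. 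The paper's route is shorter to write but depends on external machinery; yours makes the construction transparent and shows that the extension can be done explicitly.
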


\begin{proof}		
	It is well known that $f\colon \mathcal{C}\to [0,1]$ defined by $$f((a_i)_{i\in\omega})=\sum_{i=1}^{\infty}\frac{a_i}{2^{i}},$$ is a continuous and onto map \cite[3.2.B, p.~146]{Engelking1989}. Furthermore, $f|_{\mathcal{C}_{\omega}}\colon \mathcal{C}_{\omega}\to (0,1]$ is a continuous bijection.
	
	Let $\varphi \colon\mathcal{C}_{\omega}\to \mathcal{S}_c(\omega+1)$
	be  as in Proposition~\ref{prop:modelo} and define $v\colon S_c(\omega+1)\to (0,1]$  by $v=f\circ \varphi ^{-1}$. Then $v$ is a continuous bijection. Observe that  $$\mathrm{cl}_{\mathcal{CL}(\omega+1)}(S_c(\omega+1))= S_c(\omega+1)\cup \{A\in \mathcal{F}(\omega+1) : \omega\in A\}.$$ 
	
	Let $\mathcal{H}= \mathcal{S}_c(\omega+1)\cup \{A\in \mathcal{F}(\omega+1) : \omega\in A\}$. Note that for each $A\in \mathcal{F}(\omega+1)$ such that $\omega\in A$, there exists a natural continuous extension of $v$ to $\mathcal{S}_c(\omega+1)\cup\{A\}$. Thus, there exists a continuous $v'\colon\mathcal{H}\to [0,1]$ such that $v'|_{\mathcal{S}_c(\omega+1)}=v$ by \cite[3.2.A(b), p.~146]{Engelking1989}. It is not difficult to show that $v'$ is a Whitney map. Therefore, by \cite[Theorem~16.10]{Illanes-Nadler}, there exists a Whitney map $\mu\colon \mathcal{CL}(\omega+1)\to [0,1]$ such that $\mu|_{S_c(\omega+1)}=v$.
\end{proof}

The proof of our next two results is very similar to that of \cite[Lemma~4.1 and Theorem~4.2]{MayaGScX}. We include the arguments here for the sake of completeness.

\begin{lemma}\label{lembloqueconexo}
 Let $X$ be a continuum, let $\mu\colon \mathcal{CL}(X)\to [0,1]$ be a Whitney map and let $t \in (0,1)$. If $S,P \in \mathcal{S}_c(X)$ are such that $S \in (\mu |_{\mathcal{S}_c(t)})^{-1}[[t,1)]$, $S \subseteq P$, and $P \setminus S$ is finite, then $S$ and $P$ belong to the same component of $(\mu |_{\mathcal{S}_c(t)})^{-1}[[t,1)]$.
\end{lemma}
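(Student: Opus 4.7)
My approach is to produce a single connected subset of the Whitney block $W := (\mu|_{\mathcal{S}_c(X)})^{-1}[[t,1)]$ that contains both $S$ and $P$. This avoids the delicate construction of paths in $\mathcal{S}_c(X)$ and reduces everything to the connectedness of a power of $X$ plus continuity of a suitable union map. The case $P = S$ is trivial, so I may assume $k := |P \setminus S| \geq 1$ and fix an enumeration $P \setminus S = \{q_1, \dots, q_k\}$. If $\{x_n : n \in \omega + 1\}$ is the adequate enumeration of $S$, the plan is to consider the map
\[
   \Phi : X^k \to \mathcal{CL}(X), \qquad \Phi(y_1, \dots, y_k) = S \cup \{y_1, \dots, y_k\},
\]
and argue that $\Phi(X^k)$ is a connected subset of $W$ containing $S = \Phi(x_\omega, \dots, x_\omega)$ and $P = \Phi(q_1, \dots, q_k)$.

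The key steps are as follows. First, $\Phi$ takes values in $\mathcal{S}_c(X)$: adding a finite set to the nontrivial convergent sequence $S$ keeps the set countably infinite and preserves $x_\omega$ as the unique non-isolated point, so $\Phi(y_1,\dots,y_k) \in \mathcal{S}_c(X)$. Second, $\Phi$ is continuous: each coordinate projection $y_i \mapsto \{y_i\}$ is continuous into $\mathcal{F}_1(X)$, and by Lemma~\ref{lema:union-finita-cont} the union operation $\mathcal{CL}(X)^{k+1} \to \mathcal{CL}(X)$ is continuous, so the composite $\Phi$ is continuous. Third, $\Phi(X^k) \subseteq W$: monotonicity of the Whitney map gives $\mu(\Phi(y_1,\dots,y_k)) \geq \mu(S) \geq t$ (since $\Phi(y_1,\dots,y_k) \supseteq S$), while $\Phi(y_1,\dots,y_k)$ is countable and therefore strictly contained in $X$ (which is uncountable because it is a nondegenerate continuum, as $\mathcal{S}_c(X)$ is nonempty), so $\mu(\Phi(y_1,\dots,y_k)) < \mu(X) = 1$. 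Finally, $X^k$ is connected as a product of connected spaces, so $\Phi(X^k)$ is a connected subset of $W$ containing both $S$ and $P$, which immediately yields the conclusion.

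The only points requiring any care are the two membership checks: that the image really lies in $\mathcal{S}_c(X)$ (not merely in $\mathcal{CL}(X)$) and that $\mu(\Phi(y_1,\dots,y_k))$ stays strictly below $1$. The first uses the adequate-enumeration description of $S$ plus the fact that any finite augmentation preserves the defining property of a nontrivial convergent sequence; the second relies on the observation that a nondegenerate continuum is uncountable and that Whitney maps are strictly monotone on chains. No reduction to a base case by induction on $|P \setminus S|$ is needed, since the product $X^k$ handles all $k \geq 1$ simultaneously.
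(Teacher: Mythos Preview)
Your argument is correct and is essentially the paper's own proof: the paper defines $\varphi:\mathcal{F}_n(X)\to\mathcal{S}_c(X)$ by $\varphi(A)=S\cup A$ and invokes the connectedness of $\mathcal{F}_n(X)$, whereas you use $\Phi:X^k\to\mathcal{S}_c(X)$ and the connectedness of $X^k$---the same idea modulo the natural quotient $X^k\twoheadrightarrow\mathcal{F}_k(X)$. One cosmetic point: you assert $\mu(X)=1$, which is not guaranteed by the stated hypotheses, but the needed strict inequality follows anyway from $\Phi(y_1,\dots,y_k)\subsetneq X$ and strict monotonicity, giving $\mu(\Phi(y_1,\dots,y_k))<\mu(X)\le 1$.
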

\begin{proof}
Set $n = |P \setminus S|$ and let $\varphi : \mathcal{F}_n(X) \to \mathcal{S}_c(X)$ be defined by $\varphi(A) = S \cup A$ for each $A\in\mathcal{F}_n(X)$. The continuity of $\varphi$ is a consequence of Lemma~\ref{lema:union-finita-cont}. Since $S \subseteq \varphi(A)$ for each $A\in\mathcal{F}_n(X)$, the inclusion $\text{ran}(\varphi) \subseteq (\mu |_{\mathcal{S}_c(t)})^{-1}[[t,1)]$ holds. Now, by \cite[Theorem~4.10, p.~165]{Michael1951} $\mathcal{F}_n(X)$ is connected, hence so is ran$(\varphi)$. Finally, observe that $S,P\in \text{ran}(\varphi)$. This implies that $S$ and $P$ belong to the same component of $(\mu |_{\mathcal{S}_c(t)})^{-1}[[t,1)]$.
\end{proof}

\begin{theorem}\label{theobloqueconexo}
	Let $X$ be a continuum and let $\mu\colon \mathcal{CL}(X)\to [0,1]$ be a Whitney map. Then, $(\mu|_{\mathcal{S}_c(X)})^{-1}[[t,1]]$ is connected for each $t\in (0,1)$.
\end{theorem}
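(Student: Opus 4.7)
Fix $t \in (0,1)$ and set $\mathcal{B}_t := (\mu|_{\mathcal{S}_c(X)})^{-1}[[t,1]]$. I may assume $X$ is nondegenerate, for otherwise $\mathcal{S}_c(X) = \emptyset$ and the statement is vacuous; then $X$ is uncountable, so every $S \in \mathcal{S}_c(X)$ is a proper closed subset of $X$ and thus $\mu(S)<1$. Consequently $\mathcal{B}_t$ coincides with the block $(\mu|_{\mathcal{S}_c(X)})^{-1}[[t,1)]$ to which Lemma~\ref{lembloqueconexo} applies directly. The goal is to show that any two $P, Q \in \mathcal{B}_t$ lie in the same component $\mathcal{K}_P$ of $\mathcal{B}_t$.

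\textbf{Same-limit step.} Suppose first $\lim P = \lim Q$. Then $P \cup Q$ is a nontrivial convergent sequence to the common limit and $\mu(P\cup Q)\geq \mu(P)\geq t$, so $P\cup Q \in \mathcal{B}_t$. Adequately enumerate $Q = \{q_n : n \in \omega+1\}$ and put $P_j := P \cup \{q_0,\dots,q_{j-1}\}$. Each $P_j$ is a finite enlargement of $P$ with $\mu(P_j)\geq \mu(P)\geq t$, so Lemma~\ref{lembloqueconexo} places $P_j$ in $\mathcal{K}_P$. A direct neighborhood verification against the base $\{\langle \mathcal{U}\rangle_c : \mathcal{U}\in\mathfrak{C}(X)\}$ (Proposition~\ref{pro:UCXbase}) yields $P_j \to P\cup Q$ in the Vietoris topology, and since components are closed we obtain $P\cup Q \in \mathcal{K}_P$. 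Symmetrically $P\cup Q$ lies in the component of $Q$, so $P$ and $Q$ share the same component.

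\textbf{General step.} Now let $p = \lim P$ and $q = \lim Q$ be arbitrary. By continuity of $\mu$, density of $\mathcal{F}(X)$ in $\mathcal{CL}(X)$, and $\mu(X)=1$, pick a finite set $F \subseteq X$ with $\mu(F)\geq t$. As $X$ has no isolated points, fix a nontrivial convergent sequence $\{t_n : n \in \omega+1\}$ in $X$ with $t_\omega = p$, and adequately enumerate $Q = \{q_n : n \in \omega+1\}$. For each $n \in \mathbb{N}$ define
\[
R_n := \{t_k : k \geq n\} \cup \{p\} \cup \{q_0,\dots,q_{n-1},q_\omega\} \cup F.
\]
Since everything outside the tail of $(t_k)$ is finite, $R_n$ has $p$ as its unique accumulation point, hence $R_n \in \mathcal{S}_c(X)$ with $\lim R_n = p$; moreover $\mu(R_n)\geq \mu(F)\geq t$, so $R_n \in \mathcal{B}_t$. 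The same-limit step applied to $R_n$ and $P$ places $R_n \in \mathcal{K}_P$ for every $n$. A routine Vietoris calculation gives $R_n \to Q\cup\{p\}\cup F =: Q'$, and $Q'\in\mathcal{B}_t$ because it is a finite enlargement of $Q$ still converging to $q$. Closedness of $\mathcal{K}_P$ yields $Q'\in \mathcal{K}_P$, and Lemma~\ref{lembloqueconexo} applied to $Q \subseteq Q'$ places $Q$ in the same component as $Q'$, so $Q \in \mathcal{K}_P$.

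\textbf{Main obstacle.} The delicate point is the bridge construction $R_n$ in the general step: it must simultaneously maintain a single accumulation point $p$ (forcing the $Q$-data to enter only as a growing finite truncation $\{q_0,\dots,q_{n-1},q_\omega\}$), stay within $\mathcal{B}_t$ (ensured by the fixed $F$ with $\mu(F)\geq t$), and still Vietoris-converge to a finite enlargement of $Q$. This is the only step that genuinely uses the continuum hypothesis on $X$, through the non-isolation of points (to supply the sequence $(t_n)$) and the density of $\mathcal{F}(X)$ in $\mathcal{CL}(X)$ (to supply $F$).
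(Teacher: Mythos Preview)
Your proof is correct and rests on the same two ingredients as the paper's: Lemma~\ref{lembloqueconexo} for finite enlargements, and a Vietoris-limit argument together with closedness of components. The organization differs, though. The paper handles arbitrary $Q,R$ in a single pass: it fixes one extra point $x\in X\setminus(Q\cup R)$ so that finite truncations $F_j$ of $R\cup\{x\}$ eventually satisfy $\mu(F_j)>t$, then sets $Q_k=(Q\setminus\{q_1,\dots,q_k\})\cup F_{j_0+k}$, shows each $Q_k$ lies in the component of $Q$ via two applications of the lemma, and observes $Q_k\to R\cup\{x,q_\omega\}$. You instead factor through a same-limit step and, for the general case, introduce an auxiliary tail $(t_k)$ at $p$ together with a fixed finite $F$ with $\mu(F)\ge t$ to keep the bridge sequences $R_n$ inside the block. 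Your route is slightly longer but more modular; the paper's single construction is more economical since it needs only one extra point rather than a full finite $F$ and an auxiliary sequence. Both are legitimate implementations of the same idea. (A minor wording remark: your phrase ``continuum hypothesis on $X$'' should read ``hypothesis that $X$ is a continuum'' to avoid confusion with CH.)
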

\begin{proof}
Let $Q,R \in (\mu|_{\mathcal{S}_c(X)})^{-1}[[t,1]]$. We denote by $\mathcal{C}$ the component of \break $(\mu|_{\mathcal{S}_c(X)})^{-1}[[t,1]]$ that contains $Q$. Fix $x \in X \setminus (R\cup Q)$. Observe that $R \cup \{x\}  \in (\mu|_{\mathcal{S}_c(X)})^{-1}[(t,1]].$ Let $\{r_i : i \in \omega +1\}$ and $\{q_i : i \in \omega +1\}$ be adequate enumerations of $R$ and $Q$, respectively.

For each $j\in\mathbb{N}$, let $F_j = \{r_\omega, x\} \cup \{ r_i : i \leq j \}$. Note that each $F_j\in\mathcal{CL}(X)$ and $\lim_{j\to\infty}F_j=R\cup\{x\}$ . This guarantees that there exists $j_0\in\mathbb{N}$ such that $\{ F_j : j \geq j_0 \} \subseteq \mu  ^{-1}[(t,1)]$.   

For each $k \in \mathbb{N}$ define  $Q_{k} = (Q \setminus \{ q_1, \ldots, q_k\}) \cup F_{j_0 +k}$ . Then  $Q_k \in (\mu|_{\mathcal{S}_c(X)})^{-1}[(t,1]]$. 
Using Lemma~\ref{lembloqueconexo} twice, we obtain that $Q \cup F_{j_0 +k} \in \mathcal C$ and, thus, that  $Q_k \in \mathcal{C}$. Hence, since $(Q_k)_{k \in \mathbb{N}}$ converges to $R \cup \{x,q_\omega\}$, we deduce that $R \cup \{x,q_\omega\} \in \mathcal{C}$. Therefore, the fact that $R \in \mathcal{C}$ follows from Lemma~\ref{lembloqueconexo}.
\end{proof}

\begin{theorem}\label{NivelNoConexo}
There exist a Whitney map $\mu\colon \mathcal{CL}(J)\to [0,\infty)$ for an arc $J$ and $t > 0$ such that  $(\mu|_{\mathcal{S}_c(J)})^{-1}[[0,t]]$ and $(\mu|_{\mathcal{S}_c(J)})^{-1}[\{t\}]$ are not connected.
\end{theorem}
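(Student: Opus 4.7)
The plan is constructive. I would take $J = [0,1]$ and build a Whitney map $\mu : \mathcal{CL}(J) \to [0, \infty)$ by modifying a standard Whitney map (whose existence is given by \cite[Theorem~13.4]{Illanes-Nadler}), together with a specific threshold $t > 0$, so as to exhibit the desired disconnection explicitly. The guiding observation is that $\mathcal{S}_c(J) \cap \mathcal{F}(J) = \emptyset$: passing from a Whitney block in $\mathcal{CL}(J)$ (which is a continuum) to its restriction inside $\mathcal{S}_c(J)$ removes the entire subspace of finite subsets of $J$, and in particular the arc of singletons $\{\{x\} : x \in J\} \subseteq \mu^{-1}[\{0\}]$. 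In $\mathcal{CL}(J)$ this arc of singletons provides a spine threading together the Whitney block at low levels; once it is deleted, the block may fall apart. The goal of the construction is precisely to make this spine essential for the connectedness of the block at a chosen level.

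I would pick two explicit convergent sequences to serve as witnesses in different components, a natural choice being $R_1 = \{0\} \cup \{\tfrac{1}{n} : n \in \mathbb{N}\}$ and $R_2 = \{1\} \cup \{1 - \tfrac{1}{n} : n \in \mathbb{N}\}$. I would then tailor $\mu$ so that $\mu(R_1) = \mu(R_2) = t$, and argue that any hypothetical path $f : [0,1] \to (\mu|_{\mathcal{S}_c(J)})^{-1}[[0,t]]$ with $f(0) = R_1$ and $f(1) = R_2$ must contain an element whose $\mu$-value strictly exceeds $t$. To derive the contradiction I would invoke Theorem~\ref{teor:trayec-gamma-enX}: starting from any $p \in R_1$ one obtains a continuous trace $\beta : [0,1] \to J$ with $\beta(s) \in f(s)$ for all $s \in [0,1]$; by tracking such traces, and in particular the induced motion of the limit points along $f$, and then comparing with the prescribed profile of $\mu$, one forces an intermediate $s^*$ with $\mu(f(s^*)) > t$. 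The same witnesses $R_1, R_2$ serve simultaneously to disconnect the Whitney level $(\mu|_{\mathcal{S}_c(J)})^{-1}[\{t\}]$, since any path staying at level exactly $t$ is in particular a path staying in the block $[0,t]$.

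The main obstacle is the explicit construction of $\mu$ with the ``essential singleton backbone'' property at the chosen level. The Whitney conditions (continuity in the Vietoris topology, strict monotonicity under inclusion, and vanishing exactly on $\mathcal{F}_1(J)$) severely constrain how asymmetrically one can weight subsets; the delicate point is to design the secondary terms added to a diameter-type base so that finite subsets at level $t$ are genuinely the only way to bridge the two pieces containing $R_1$ and $R_2$, while keeping the resulting map a bona fide Whitney map. An auxiliary technical point is to verify that the set of sequences below or at level $t$ truly splits into (at least) two nonempty pieces that are open in $\mathcal{S}_c(J)$, for which Proposition~\ref{pro:UCXbase} provides a convenient base of neighborhoods indexed by finite cellular families.
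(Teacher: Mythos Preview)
Your proposal is not yet a proof: the central object---the Whitney map $\mu$ with the desired ``essential singleton backbone'' property---is never constructed, and you yourself flag this as ``the main obstacle.'' Everything else in the plan hinges on it. Without an explicit $\mu$ (or at least a verified recipe for producing one), there is nothing to check; in particular, the claim that a path from $R_1$ to $R_2$ must pass through a level strictly above $t$ cannot be established in the abstract, since for many Whitney maps on $[0,1]$ this is simply false.

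There is also a logical mismatch in the argument you outline. Showing that no \emph{path} in $(\mu|_{\mathcal{S}_c(J)})^{-1}[[0,t]]$ joins $R_1$ to $R_2$ yields only that this set is not pathwise connected, not that it is disconnected. Your last paragraph gestures at the right fix (exhibiting a separation into open pieces), but then the entire trace-tracking argument via Theorem~\ref{teor:trayec-gamma-enX} becomes irrelevant: once you have a clopen set, you are done, and there is no need to study paths at all.

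For comparison, the paper sidesteps both difficulties by taking an \emph{off-the-shelf} Whitney map (the one in \cite[13.5]{Illanes-Nadler}, built from minimal pairwise distances) and choosing the arc $J$ to be a specific plane curve. With that concrete $\mu$ in hand, one writes down a Vietoris basic set $U$ in the block $(\mu|_{\mathcal{S}_c(J)})^{-1}[[0,\tfrac{9}{8}]]$ and checks by a short metric computation that $U$ equals a closed Vietoris set $V$, hence is clopen and proper. No custom Whitney map, no path-tracing: the whole proof is a direct verification of a clopen separation.
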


\begin{proof}
Let $J = \{ (x,y) : \max \{ |x|, |y| \} = 1, y < 2|x| \}$. We shall prove that the Whitney map for $\mathcal{CL}(J)$ defined in \cite[13.5, p.~108]{Illanes-Nadler} satisfies the required properties. For the sake of completeness, we recall the construction of such Whitney map $\mu : \mathcal{CL}(J) \to [0, \infty)$.

First, for a subset $A$ of $J$, the symbol $\mathcal{F}_n(A)$ represents the set $[A]^{<n+1} \setminus \{\emptyset\}$.
Now, fix $A \in \mathcal{CL}(J)$. For any $n \in \mathbb{N} - \{1\}$, define $\lambda_n : \mathcal{F}_n(A) \to [0,\infty)$ as follows: for each $K \in \mathcal{F}_n(A)$, say $K=\{a_1, a_2, ..., a_n\}$, let 
      $$\lambda_n(K) =    \min \{d(a_i,a_j) : i \ne j \}$$
(in particular, if $|K |=1$ then $\lambda_n(K) =0$).      
Next, for each $n \in \mathbb{N} - \{1\}$, let $\mu_n(A) =  \text{lub} \{ \lambda_n(K) : K \in \mathcal{F}_n(A) \}$. Finally, let $$\mu(A) = \sum_{n=2}^{\infty} 2^{1-n} \cdot \mu_n(A).$$

Let us argue that $(\mu|_{\mathcal{S}_c(J)})^{-1}\left[[0,\frac{9}{8}]\right]$ is not connected by proving that there exists a nonempty proper closed open subset of $(\mu|_{\mathcal{S}_c(J)})^{-1}\left[[0,\frac{9}{8}]\right]$. Set $$U = \left\langle \left(-\frac{3}{4},-\frac{1}{2}\right] \times \{1\}, \left[\frac{1}{2},\frac{3}{4}\right) \times \{1\} \right\rangle \cap (\mu|_{\mathcal{S}_c(J)})^{-1}\left[\left[0,\frac{9}{8}\right]\right]$$ and $$V =  \left\langle \left[-\frac{5}{8},-\frac{1}{2}\right] \times \{1\}, \left[\frac{1}{2},\frac{5}{8}\right] \times \{1\} \right\rangle \cap (\mu|_{\mathcal{S}_c(J)})^{-1}\left[\left[0,\frac{9}{8}\right]\right].$$ Notice that $U$ is a nonempty proper open subset of $(\mu|_{\mathcal{S}_c(J)})^{-1}\left[[0,\frac{9}{8}]\right]$, $V$ is a nonempty proper closed subset of $(\mu|_{\mathcal{S}_c(J)})^{-1}\left[[0,\frac{9}{8}]\right]$ and $V \subseteq U$. To end the proof, it suffices to show that $U \subseteq V$. 

Let $S \in U$ be arbitrary. We will see that $S \cap ((-\frac{3}{4},-\frac{1}{2}] \times \{1\}) \subseteq [-\frac{5}{8},-\frac{1}{2}] \times \{1\}$ and that $S \cap ([\frac{1}{2},\frac{3}{4}) \times \{1\}) \subseteq [\frac{1}{2},\frac{5}{8}] \times \{1\}$. Suppose that there exists\break $x \in (S \cap ((-\frac{3}{4},-\frac{1}{2}] \times \{1\})) \setminus ([-\frac{5}{8},-\frac{1}{2}] \times \{1\})$. Take $y \in S \cap ([\frac{1}{2},\frac{3}{4}) \times \{1\})$. Then $\lambda_n(\{x,y\}) > \frac{9}{8}$ for each $n \geq 2$. Hence, $\mu_n(S) \geq \lambda_n(\{x,y\}) > \frac{9}{8}$, and so $\mu(S) > \frac{9}{8}$; a contradiction. Thus, $S \cap ((-\frac{3}{4},-\frac{1}{2}] \times \{1\}) \subseteq [-\frac{5}{8},-\frac{1}{2}] \times \{1\}$. Similarly, we can prove that $S \cap ([\frac{1}{2},\frac{3}{4}) \times \{1\}) \subseteq [\frac{1}{2},\frac{5}{8}] \times \{1\}$. Therefore, $S \in V$ and $U\subseteq V$.

The conclusion on $(\mu|_{\mathcal{S}_c(J)})^{-1}\left[\{\frac{9}{8}\}\right]$ follows from the fact that $U \cap (\mu|_{\mathcal{S}_c(J)})^{-1}\left[\{\frac{9}{8}\}\right]$ is a nonempty proper closed open subset of $(\mu|_{\mathcal{S}_c(J)})^{-1}\left[\{\frac{9}{8}\}\right]$.
\end{proof}

Recall that a continuous function $f:X \to Y$ is \emph{monotone} provided that its fibers are connected.

In our last result, the symbol $H(A,B)$ will denote the Hausdorff distance between $A$ and $B$ (\cite[Definition~2.1]{Illanes-Nadler}).

\begin{theorem}
Let $X$ be a continuum and let $\mu$ be a Whitney map for $\mathcal{CL}(X)$. If $(\mu|_{\mathcal{S}_c(X)})^{-1}(t)$ is connected and $\{ A \in \mu^{-1}(t) : \langle A \rangle_c \neq \emptyset \}$ is dense in $\mu^{-1}(t)$ for each $t \in (0,\mu(X))$, then $\mu$ is monotone.
\end{theorem}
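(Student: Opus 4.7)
The plan is to establish that $\mu^{-1}(t)$ is connected for every $t\in[0,\mu(X)]$. The extreme cases are immediate since $\mu^{-1}(0)=\mathcal{F}_1(X)$ is homeomorphic to the continuum $X$, while $\mu^{-1}(\mu(X))=\{X\}$. Fix $t\in(0,\mu(X))$ and write $\mathcal{L}=\mu^{-1}(t)$, $\mathcal{L}_c=\mathcal{L}\cap\mathcal{S}_c(X)$, and $D_t=\{A\in\mathcal{L}:\langle A\rangle_c\neq\emptyset\}$. My strategy is to prove that $\mathcal{L}_c$ is dense in $\mathcal{L}$: since $\mathcal{L}_c$ is connected by hypothesis, it will follow that $\mathcal{L}=\overline{\mathcal{L}_c}$ is connected as the closure of a connected set. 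Because $D_t$ is dense in $\mathcal{L}$, it suffices to show $D_t\subseteq\overline{\mathcal{L}_c}$.

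So fix $A\in D_t$ together with a nontrivial convergent sequence $S\subseteq A$; strict monotonicity of $\mu$ gives $\mu(S)\leq t$, and $\mu(S)=t$ forces $A=S\in\mathcal{L}_c$. Assume therefore $\mu(S)<t$ (so $A$ is infinite), and I will produce $A_n\in\mathcal{L}_c$ with $H(A_n,A)\leq 1/n$. Since $\mu(A)<\mu(X)$, $A\subsetneq X$; the connectedness of $X$ prevents $A$ from being clopen, so the open neighborhood $N_{1/n}(A)=\{x\in X:d(x,A)<1/n\}$ strictly contains $A$ and hence $\mu(\overline{N_{1/n}(A)})>t$ by strict monotonicity. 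Pick a finite $F_n\subseteq A$ that is $(1/n)$-dense in $A$ and, by continuity of $\mu$ combined with the Hausdorff approximation of $\overline{N_{1/n}(A)}$ by its finite subsets, a finite $H_n\subseteq\overline{N_{1/n}(A)}$ satisfying $\mu(S\cup F_n\cup H_n)>t$. I then construct a continuous path $\phi_n\colon[0,1]\to\mathcal{S}_c(X)$ of the form $\phi_n(\lambda)=S\cup F_n\cup E_n(\lambda)$ with $E_n(\lambda)$ a continuously varying finite subset of $\overline{N_{1/n}(A)}$, with $\phi_n(0)=S\cup F_n$ and $\phi_n(1)=S\cup F_n\cup H_n$. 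Since $S$ is a nontrivial convergent sequence, each $\phi_n(\lambda)$ is again a nontrivial convergent sequence; applying the intermediate value theorem to the continuous function $\mu\circ\phi_n$ yields $\lambda_n\in(0,1)$ with $A_n:=\phi_n(\lambda_n)\in\mathcal{L}_c$. The inclusion $A_n\subseteq\overline{N_{1/n}(A)}$ together with $F_n\subseteq A_n$ being $(1/n)$-dense in $A$ gives $H(A_n,A)\leq 1/n$, so $A_n\to A$ and $A\in\overline{\mathcal{L}_c}$, as required.

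The principal obstacle is the construction of the path $\phi_n$: one needs a continuous one-parameter family of finite subsets of $\overline{N_{1/n}(A)}$ interpolating between a singleton of $S\cup F_n$ and $H_n$. When $X$ is pathwise connected (for instance a locally connected continuum) this is routine: enumerate $H_n=\{q_1,\dots,q_k\}$ and adjoin each $q_j$ in turn by sliding a fresh point along an arc in $X$ from a point of $S\cup F_n$ to $q_j$, invoking Lemma~\ref{lema:union-finita-cont} for continuity of unions. For a general continuum $X$, where $\mathcal{F}(X)$ need not be pathwise connected, one must work instead inside $\mathcal{CL}(X)$ (pathwise connected via order arcs to $X$) while threading the intermediate finite sets so that they remain within $\overline{N_{1/n}(A)}$ and produce nontrivial convergent sequences when unioned with $S\cup F_n$. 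Combining this careful path-construction with the quantitative control of Hausdorff distance from $A$ is the technical heart of the argument.
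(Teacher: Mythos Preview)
Your overall strategy is exactly the paper's: show that $D_t\subseteq\overline{\mathcal L_c}$, whence $\mu^{-1}(t)=\overline{D_t}\subseteq\overline{\mathcal L_c}\subseteq\mu^{-1}(t)$ and the level is connected as the closure of a connected set. The difficulty you yourself flag is real, however, and the final paragraph does not close the gap. You need a continuous one-parameter family of finite sets $E_n(\lambda)\subseteq\overline{N_{1/n}(A)}$ interpolating between a point of $S\cup F_n$ and $H_n$; for a general continuum $X$ no such family need exist, and your appeal to ``order arcs in $\mathcal{CL}(X)$'' does not help, since order arcs leave the realm of finite sets immediately and there is no mechanism to keep the union a \emph{nontrivial convergent sequence} along the way. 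As written the argument only works when $X$ is pathwise connected.

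The paper avoids this by never moving a finite \emph{set} continuously --- it moves a single \emph{point} through a connected subcontinuum. Concretely: pick a component $C$ of $A$; since $C\neq X$, boundary bumping gives a subcontinuum $D$ with $C\subsetneq D\subseteq N(\varepsilon,C)$, so one can choose $p\in D\setminus A$. Next take a finite $B\subseteq A$ with $H(A,B)<\varepsilon$ and $\mu(B\cup\{p\})>t$ (density of finite sets plus continuity of $\mu$), and set $Q=S\cup B$. Now the map $g\colon D\to\mathcal S_c(X)$, $g(x)=Q\cup\{x\}$, is continuous, every $g(x)$ is $\varepsilon$-close to $A$, and $\mu(g(y))\le t$ for $y\in C\cap A$ while $\mu(g(p))>t$. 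Since $D$ is \emph{connected} (not necessarily an arc), the intermediate value theorem applied to $\mu\circ g$ gives $z\in D$ with $g(z)\in\mathcal L_c$. The key insight you are missing is that one only needs to adjoin \emph{one} extra point to push $\mu$ above $t$, and the parameter space for that single point can be a subcontinuum rather than a path.
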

\begin{proof}
Fix $t \in (0, \mu(X))$ be arbitrary.  First, we shall show that $\{ A \in \mu^{-1}(t) : \langle A \rangle_c \neq \emptyset \} \subseteq \text{cl}_{\mathcal{CL}(X)} (\mu|_{\mathcal{S}_c(X)})^{-1}(t)$.

Let $A \in \mu^{-1}(t)$ be such that $\langle A \rangle_c \neq \emptyset$ and let $\varepsilon > 0$ arbitrary. Take a component $C$ of $A$. Then $C \neq X$. So, there exists a subcontinuum $D$ of $X$ satisfying that $C \subsetneq D \subseteq N(\varepsilon,C)$. Choose $p \in D \setminus A$. Since $\mathcal F(A)$ is  dense in $\mathcal{CL}(A)$, there exists $B \in \mathcal F(A)$ such that $\mu(B \cup \{p\}) > t$ and $H(A,B) < \varepsilon$. Fix $S \in \langle A \rangle_c$. Set $Q = S \cup B$. Observe that $Q \in \langle A \rangle_c$ and $\mu(Q \cup \{p\}) > t$. Define the map $g : D \to \mathcal{S}_c(X)$ by $g(x) = Q \cup \{x\}$. Notice that $H(A,g(x)) < \varepsilon$ for each $x \in D$, $\mu(g(y)) \leq \mu(A) = t$ for each $y \in A$ and $\mu(g(p)) > t$. Thus, this and the connectedness of $D$ imply that there exists $z \in D$ satisfying that $\mu(g(z)) = t$. This proves that $A \in \text{cl}_{\mathcal{CL}(X)} (\mu|_{\mathcal{S}_c(X)})^{-1}(t)$.

Finally, our assumption that $\{ A \in \mu^{-1}(t) : \langle A \rangle_c \neq \emptyset \}$ is dense in $\mu^{-1}(t)$ guarantees that $\mu^{-1}(t) = \text{cl}_{\mathcal{CL}(X)} (\mu|_{\mathcal{S}_c(X)})^{-1}(t)$ and so $\mu^{-1}(t)$ is connected.
\end{proof}

\end{document}